\documentclass[11pt]{amsproc}
\usepackage{graphicx, amssymb,amsmath, latexsym,cite,fancyhdr}
\usepackage{amsthm,arydshln}
\usepackage{times}
\usepackage{float}
\usepackage{tikz}
\usetikzlibrary{arrows,shapes,positioning,calc}
\usepackage[T1]{fontenc}
\usepackage[ruled,linesnumbered,norelsize]{algorithm2e}
\usepackage{url}
\usepackage{colortbl}
\usepackage{longtable}
\usepackage[all]{xy}



\newtheoremstyle{mythm}
{6pt}
{6pt}
{\it}
{}
{\bf}
{.}
{.5em}
{}

\newtheoremstyle{mydef}
{6pt}
{6pt}
{}
{}
{\bf}
{.}
{.5em}
{}

\newtheoremstyle{myrem}
{6pt}
{6pt}
{}
{}
{\bf}
{.}
{.5em}
{}

\newtheoremstyle{myex}
{6pt}
{3ex}
{}
{}
{\bf}
{.}
{.5em}
{}

\theoremstyle{mythm}
\newtheorem{theorem}{Theorem}
\newtheorem{lemma}[theorem]{Lemma}
\newtheorem{proposition}[theorem]{Proposition}

\theoremstyle{mydef}
\newtheorem{definition}[theorem]{Definition}

\theoremstyle{myrem}
\newtheorem{remark}[theorem]{Remark}
\numberwithin{equation}{section}

\theoremstyle{myex}
\newtheorem{example}[theorem]{Example}

\setlength{\textwidth}{16.4cm}
\addtolength{\topmargin}{-0.3cm}
\addtolength{\oddsidemargin}{-1.9cm}
\addtolength{\evensidemargin}{-1.9cm}
\addtolength{\textheight}{1cm}
\setlength{\marginparwidth}{1.9cm}
\setlength{\parskip}{3pt}


\newcommand{\ad}{\mathop{\rm ad}}

\newcommand{\Span}{{\rm Span}}
\renewcommand{\sl}{{\mathfrak{sl}}}
\newcommand{\gl}{{\mathfrak{gl}}}
\newcommand{\GL}{{\rm GL}}
\newcommand{\SL}{{\rm SL}}

\newcommand{\sign}{{\rm sgn}}
\newcommand{\diag}{\mathrm{diag}}
\newcommand{\g}{{\mathfrak g}}
\newcommand{\fa}{{\mathfrak a}}
\newcommand{\fb}{{\mathfrak b}}

\newcommand{\fn}{{\mathfrak n}}

\newcommand{\fk}{{\mathfrak k}}
\newcommand{\fp}{{\mathfrak p}}

\newcommand{\End}{{\rm End}}

\newcommand{\fs}{{\mathfrak s}}
\newcommand{\ft}{{\mathfrak t}}

\newcommand{\fz}{{\mathfrak z}}
\newcommand{\h}{{\mathfrak h}}

\newcommand{\exdone}{\hfill$\bullet$}

\newcommand{\C}{\mathbb{C}}

\newcommand{\Ad}{\mathrm{Ad}}
\newcommand{\Int}{\mathrm{Int}}

\newcommand{\R}{\mathbb{R}}
\newcommand{\Z}{\mathbb{Z}}
\newcommand{\Aut}{{\rm Aut}}

\newcommand{\B}{\mathcal{B}}
\newcommand{\G}{\mathcal{G}}
\newcommand{\K}{\mathcal{K}}
\newcommand{\mc}[1]{\mathbf{#1}}

\newcounter{ithmcount}

\newenvironment{items}{
\begin{list}{$\alph{item})$}
{\labelwidth30pt \leftmargin30pt \topsep3pt \itemsep2pt \parsep0pt}}
{\end{list}}

\allowdisplaybreaks

\pagestyle{headings}
\pagestyle{fancy}
\setlength{\headwidth}{\textwidth}
\fancyhf{}
\headheight 13.6pt
\fancyhead[LE,RO]{\thepage}
\fancyhead[CO]{Nilpotent orbits in real symmetric pairs}
\fancyhead[CE]{Dietrich -- de Graaf -- Ruggeri -- Trigiante}

\begin{document}


\title{Nilpotent orbits in real symmetric pairs and stationary black holes}

\author[H.\ Dietrich]{Heiko Dietrich}
 \address{School of Mathematical Sciences, Monash University, VIC 3800, Australia}
 \email{heiko.dietrich@monash.edu}
 \author[W.\ A.\ de Graaf]{Willem A.\ de Graaf}
 \address{Department of Mathematics, University of Trento, Povo (Trento), Italy}
\email{degraaf@science.unitn.it}
\author[D.\ Ruggeri]{Daniele Ruggeri}
\address{Universit\`a di Torino, Dipartimento di Fisica
and I.N.F.N. - sezione di Torino, Via P. Giuria 1, I-10125 Torino, Italy}
\email{daniele.rug@gmail.com}
\author[M.\ Trigiante]{Mario Trigiante}
\address{DISAT, Politecnico di Torino, Corso Duca degli Abruzzi 24, I-10129
Torino, Italy}
\email{mario.trigiante@polito.it}

\begin{abstract}
In the study of stationary solutions in extended supergravities with symmetric scalar manifolds, the nilpotent orbits of a real symmetric pair play an important role. In this paper we discuss two approaches to determine the nilpotent orbits of a real
symmetric pair. We apply our methods to an explicit example, and thereby
classify the nilpotent orbits of $(\SL_2(\R))^4$ acting on the fourth tensor
power of the natural 2-dimensional  $\SL_2(\R)$-module.
This makes it possible to classify all stationary solutions of
the so-called STU-supergravity model.
\end{abstract}

\thanks{Dietrich was supported by an ARC DECRA (Australia), project DE140100088.}


\maketitle

\reversemarginpar

\section{Introduction}
\noindent Studying and classifying the nilpotent orbits of a (real or complex) semisimple Lie group has drawn a lot of attention in the mathematical literature, we refer to the book of Collingwood \& McGovern \cite{colmcgov} or the recent papers \cite{dfg2,gra15,litt8} for more details and references.  Besides their intrinsic mathematical importance, nilpotent orbits also have a significant bearing on theoretical physics, in particular, on the problem of studying (multi-center) asymptotically flat black hole solutions to extended supergravities, see for example \cite{rev1,rev2,rev3,rev4,BPSmc,Goldstein:2008fq,Bena:2009ev,bossard21,bossard22}. Of particular relevance in that context are real symmetric pairs $(\g,\g_0)$, that is, real semisimple Lie algebras $\g$ which admit a $\Z/2\Z$-grading $\g=\g_0\oplus\g_1$. In ungauged 4-dimensional supergravity models featuring a symmetric scalar manifold, all stationary solutions (which are locally asymptotically flat) admit an effective description as solutions to a 3-dimensional sigma-model with symmetric, pseudo-Riemannian target space. In particular they fall within orbits of the isotropy group $G_0$ of this symmetric target space, which is a real semisimple non-compact Lie group, acting on the tangent space $\g_1$ to which the Noether charge matrix of the solution belongs. If the black hole solution is extremal, namely has vanishing Hawking temperature, then the corresponding Noether charge matrix is nilpotent and thus belongs to a nilpotent $G_0$-orbit on $\g_1$, see for example \cite{Gaiotto:2007ag,Bergshoeff:2008be,Bossard:2009at}. We recall that a $G_0$-orbit is nilpotent if its closure contains~0; this is the reason why such  orbits are also called unstable.
So far the classification of such solutions was mainly based on the \emph{complex} nilpotent orbits of the complexification $G_0^c$ acting on $\g_1^c$, see for example \cite{bossard21,bossard22}. By the Kostant-Sekiguchi bijection, these complex orbits  are in one-to-one correspondence to the real nilpotent orbits of $G$ acting on its Lie algebra $\mathfrak{g}$.\footnote{See \cite{deBoer:2014iba,Deger:2015tra} for recent applications of this classification to the study of supersymmetric string solutions.}  On the other hand, \emph{real} nilpotent orbits of $G_0$ acting on $\g_1$ provide a more intrinsic characterization of regular single-center solutions (that is, black hole solutions which do not feature curvature singularities): each $G_0^c$-orbit accommodates in general singular as well as regular solutions, which can be distinguished by their $G_0$-orbits. The notion of $G_0$-orbits also provide stringent, $G$-invariant regularity constraints on multi-center solutions: A necessary regularity condition for a multi-black hole system to be regular is that each of its constituents is regular \cite{bossard21} and this in turn translates into a condition on their $G_0$-orbits.

In this paper we illustrate the importance of real nilpotent orbits by considering single-center solutions to a simple 4-dimensional model, namely the so-called STU model, see for instance  \cite{Bergshoeff:2008be,bossard21}. We briefly provide the physical motivation for this problem (-- referring to \cite{mariodaniele} for a more detailed discussion of multi-center solutions --) and then attack it using a purely mathematical approach.
 More generally, we describe the mathematical framework for two methods which can be used to list the nilpotent orbits of a Lie group that has been constructed from a real symmetric pair.

\subsection{Results and structure of the paper}
In Section \ref{secbackground}, we give details on the physical background and motivation of this paper. In Section \ref{secsetup}, our  mathematical set-up is outlined and relevant definitions are given. For greater generality, to each real symmetric pair $(\g,\g_0)$ with corresponding grading $\g=\g_0\oplus\g_1$ we associate a class of Lie groups (rather than just one group) acting on $\g_1$; we show that each of these groups is reductive (in the sense of Knapp \cite{knapp}). In Section \ref{secbackground} we also formally describe the main example considered in this paper: it is constructed from a $\Z/2\Z$-grading of a real Lie algebra $\g$ of type $D_4$, and leads to a representation of the Lie group $G_0=(\SL_2(\R))^4$ on the space $\g_1\cong V_2\otimes V_2\otimes V_2\otimes V_2$, where $V_2$ is the natural 2-dimensional $\SL_2(\R)$-module. In \cite{bddmr} this this representation has been considered over the complex field, and it is shown that there are 30 nonzero  nilpotent orbits. The methods we develop here will be used to show that there are 145 nonzero nilpotent orbits over the real numbers; 101 of these orbits are relevant to the study of the STU-model solutions introduced in Section \ref{secbackground}. Figure \ref{Figfigs} summarises the results.
\begin{figure}[ht]
\centering
\includegraphics[width=17cm]{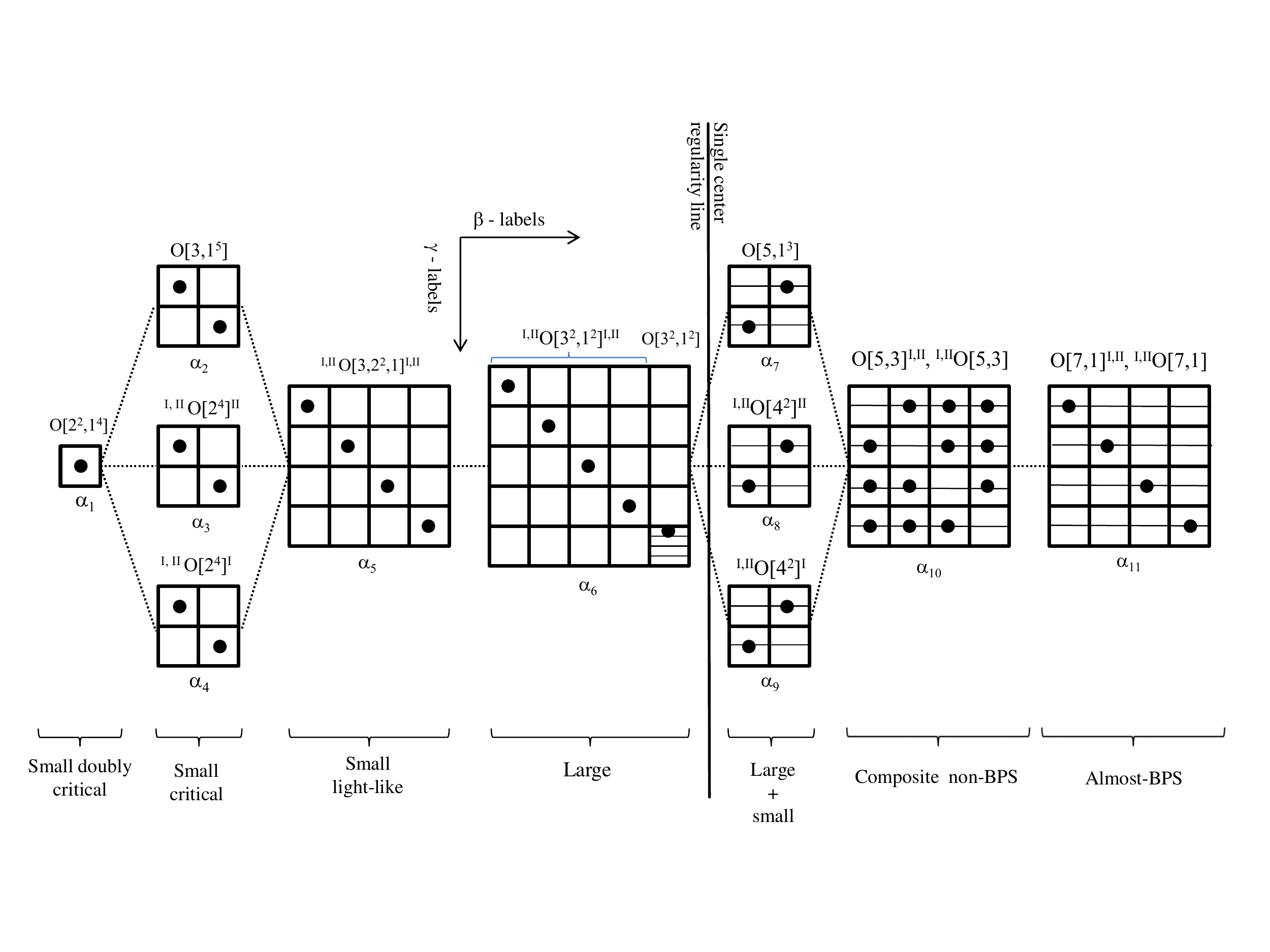}
 \caption{\small Schematic representations of the nilpotent orbits of $G_0={\rm SL}_2(\mathbb{R})^4$ on the coset space $\mathfrak{g}_1$ of ${\rm SO}(4,4)/{\rm SL}_2(\mathbb{R})^4$. Each square block represents an ${\rm SO}(4,4)^\mathbb{C}$-nilpotent orbit in its Lie algebra, while each column is in one-to-one correspondence with ${\rm SO}_0(4,4)$-nilpotent orbits in $\mathfrak{so}(4,4)$. For their description we use the notation of \cite{colmcgov,Bossard:2009we,bddmr} (the trivial orbit $[1^8]$ is omitted). Thick vertical and horizontal lines separate orbits with distinct $\beta$- and $\gamma$-labels, respectively.
 The orbits $\alpha^{(2)},\alpha^{(3)}, \alpha^{(4)}$ as well as the orbits $\alpha^{(7)},\alpha^{(8)}, \alpha^{(9)}$ are related by STU triality, which is the outer-automorphism of the $D_4$ algebra $\mathfrak{g}^c$. The empty slots do not contain regular solutions \cite{mariodaniele}.  The orbit structure with respect to the ${\rm SO}(2,2)^2$ subgroup of ${\rm SO}(4,4)$, which is relevant to the study of stationary solutions, is obtained by removing the thin horizontal lines in the $\alpha^{(7)},\dots, \alpha^{(11)}$-blocks, thus halving the corresponding number of cells and yielding a total of 101 orbits.}\label{Figfigs}
\end{figure}

In Sections \ref{triples} and  \ref{carrier1}, as our first main result, we describe two methods for listing the
nilpotent orbits of a real symmetric pair. Variations of these methods have been used in the literature, and to both we make useful additions. The general outline of these methods is the same: first one determines a finite set of nilpotent
elements that contains representatives of all orbits; second, these elements are shown to be non-conjugate by using a variety of arguments. More precisely, in Section \ref{triples}  we review the classification procedure of real
nilpotent orbits used  in  \cite{Fre:2011uy}; this procedure is based on finding certain special $\mathfrak{sl}_2$-triples and uses  tensor classifiers. However, it has not been shown  in \cite{Fre:2011uy} that one can always find such triples. Here we rigorously prove that. For the implementation of this method
we use the system {\sc Mathematica} \cite{mathematica}, mainly because of its
equation solving abilities.
In Section \ref{carrier1}, we summarise the classification procedure based on
Vinberg's theory of carrier algebras \cite{vinberg2}, which was extended to
the real case in \cite{dfg2}. In that paper some confusing assumptions
have been posed on the Lie group that is used;  we clarify this here. For the
implementation of this method we use the computational algebra system
{\sf GAP}4 \cite{GAP4} and its package {\sf CoReLG} \cite{corelg}.

We remark that V{\^a}n L{\^e} \cite{vanle} has developed a third strategy  for listing the nilpotent orbits
of a symmetric pair; however, her  method requires to solve a difficult problem in algebraic geometry and therefore, to the best of our knowledge, has yet not led to a practical algorithm or implementation.

In Section \ref{SNO}, as our second main result, we introduce  some  mathematical invariants and methods for distinguishing real nilpotent orbits. We discuss the so-called $\alpha$-, $\beta$-, and $\gamma$-labels, tensor classifiers,
and a (rather brute force) method based on solving polynomial equations using the technique of Gr\"obner bases.

We apply the two approaches described in Sections \ref{triples} and \ref{carrier1} to our main example of the STU model, and obtain  the same classification. This classification is our third main result, and we report on our findings in Section \ref{sectab}.
Here we note that both
methods have their advantages and drawbacks: An advantage of
the method based on $\sl_2$-triples is that it produces so-called Cayley triples, which gives a straightforward algorithm to
compute the $\beta$-label of the orbit. An advantage of the method based on carrier algebras is that it produces
representatives with ``nice'' coefficients: in our main example, these coefficients are $\pm1$, see the orbit representatives in
Table \ref{tabletot}. Clearly, having two methods also allows for a convenient cross-validation of our classification results.

\section{Background and physical motivation}\label{secbackground}

\noindent One of the physical motivations behind the study of nilpotent orbits of real semisimple Lie groups is the problem of studying asymptotically-flat black hole solutions to extended (that is, $\mathcal{N}>1$) ungauged supergravities.\footnote{Here we restrict attention  to supergravity models with symmetric homogeneous scalar manifolds.} These theories feature characteristic global symmetry groups of the field equations, and Bianchi identities. Such groups act on the scalar fields as isometry groups of the corresponding scalar manifold, and, at the same time, through generalised electric-magnetic transformations on the vector field strengths and their magnetic duals, see \cite{Gaillard:1981rj}. In \cite{Breitenlohner:1987dg} it was found that a subset of all solutions to the 4-dimensional theory, namely, the stationary (locally-)asymptotically-flat ones \cite{rev1,rev2,rev3,rev4}, actually feature a larger symmetry group $G$ which is not manifest in four space-time dimensions ($D=4$),  but rather in an effective Euclidean 3-dimensional description which is formally obtained by compactifying the 4-dimensional model along the time direction and dualising the vector fields into scalars. Stationary 4-dimensional asymptotically-flat black hole solutions can be conveniently arranged in orbits with respect to this larger symmetry group $G$, whose action  has proven to be a valuable tool for their classification (see \cite{Cvetic:1995kv,Gunaydin:2005mx,Gaiotto:2007ag,Bergshoeff:2008be,Bossard:2009at,Chemissany:2009hq,Bossard:2009we,Kim:2010bf,Chemissany:2010zp,Fre:2011uy,bossard21,bossard22,Chemissany:2012nb}). It also yields a ``solution-generating technique''  (see \cite{Cvetic:1995kv}) for constructing new solutions from known ones (see \cite{Cvetic:2013vqi,Andrianopoli:2013kya,Andrianopoli:2013jra,Chow:2013tia,Chow:2014cca}).

\subsection{Asymptotically flat black holes and nilpotent orbits}
In the effective $D=3$ description, stationary asymptotically-flat 4-dimensional black holes are solutions to an Euclidean non-linear sigma-model coupled to gravity, the target space being a pseudo-Riemannian manifold $\mathcal{M}$ of which $G$ is the isometry group. Such solutions are described by a set of scalar fields $\phi^I(x^i)$ parametrising $\mathcal{M}$, which are functions of the three spatial coordinates $x^1,x^2,x^3$; in the axisymmetric solutions the dependence is restricted to the polar coordinates $(r,\theta)$ only. The asymptotic data defining the solution comprise the value $\phi_0\equiv (\phi^I_0)$ of the scalar fields at radial infinity and the Noether  charge matrix $Q$, which is  associated with the global symmetry group of the sigma-model and which has value in the Lie algebra $\mathfrak{g}$ of $G$. If $\mathcal{M}$ is homogeneous, then we can always fix $G$ to map the point at infinity $\phi_0$ into the origin $O$, where the invariance under the isotropy group $G_0$ of $\mathcal{M}$ is manifest.\footnote{In contrast to  \cite{Chemissany:2012nb}, to uniform our notation with mathematical convention, here we denote the isotropy group of $\mathcal{M}$ by $G_0$ and its Lie algebra by $\g_0$,  instead of $H^*$ and $\mathfrak{H}^*$; moreover, we denote the coset space by $\mathfrak{g}_1$ instead of $\mathfrak{K}^*$.} We restrict ourselves only to models in which $\mathcal{M}$ is homogeneous symmetric of the form $\mathcal{M}=G/G_0$. The solutions are therefore classified according to the action of $G_0$ (residual symmetry at the origin) on the Noether charge matrix $Q$, seen as an element of the tangent space to the manifold in $O$. The rotation of the solution is encoded in another $\mathfrak{g}$-valued matrix $Q_\psi$, first introduced in \cite{Andrianopoli:2012ee,Andrianopoli:2013kya}, which contains the angular momentum of the solution as a characteristic component and vanishes in the static limit. Once we fix $\phi_0\equiv O$, both $Q$ and $Q_\psi$ become elements of the coset space $\g_1$ (which is isomorphic to the tangent space at the origin) and thus transform under $G_0$. The action of $G$ on the whole solution amounts to the action of $G_0$ on $Q$ and $Q_\psi$.

 Non-extremal (or extremal over-rotating) solutions are characterized by matrices  $Q$ and $Q_\psi$  belonging to the same regular $G_0$-orbit which contains the Kerr (or the extremal-Kerr) solution.  In the so-called  STU model, which is an $\mathcal{N}=2$ supergravity coupled to three vector multiplets, the most general representative of the Kerr-orbit was derived in \cite{Chow:2013tia,Chow:2014cca} and features all the duality-invariant properties of the most general solution to the maximal (ungauged) supergravity of which the STU model is a consistent truncation. (The name of this model comes from the conventional notation $S$, $T$, and $U$ for  the three complex scalar fields in these multiplets). On the other hand, extremal static and \emph{under-rotating} solutions \cite{Rasheed:1995zv,Larsen:1999pp,Astefanesei:2006dd} feature nilpotent $Q$ and $Q_\psi$ which belong to \emph{different} orbits of $G_0$. The classification of these solutions is therefore intimately related to the classification of the nilpotent orbits in a  given representation $\rho$ of a real non-compact semisimple Lie group -- which is the general mathematical problem we focus on in this paper: here the representation $\rho$ is defined by the adjoint action of $G_0$ on the coset space $\mathfrak{g}_1$ which $Q$ and $Q_\psi$ belong to, once we fix $\phi_0\equiv O$.

Stationary extremal solutions have been studied in \cite{Bossard:2009we,bossard21,bossard22} in terms of the nilpotent orbits of the complexification $G^c_0$ of $G_0$; the latter are known from the mathematical literature. As far as single-center solutions are concerned, as mentioned in the introduction, these orbits, as opposed to the real ones, do not provide an intrinsic characterisation of regular single-center solutions, since in general they contain  singular solutions as well as regular ones. A classification of real nilpotent orbits has been performed in specific $\mathcal{N}=2$ ungauged models \cite{Fre:2011ns,Fre:2011uy,Chemissany:2012nb}, in connection to the study of their extremal  4-dimensional solutions. There it is shown that, at least for single-center black holes, there is a one-to-one correspondence between the regularity of the solutions\footnote{Here, somewhat improperly, we use the term regular also for \emph{small black holes}, namely solutions with vanishing horizon area; these are limiting cases of regular solutions with finite horizon-area, which are named \emph{large black holes}.} (as well as their supersymmetry) and certain real nilpotent orbits. This allows us to check the regularity of the solution by simply inspecting the corresponding $G_0$-orbit. The classification procedure adopted in \cite{Fre:2011ns,Fre:2011uy,Chemissany:2012nb} combines the method of standard triples \cite{colmcgov}
with new techniques based on the Weyl group: After a general group
theoretical analysis of the model, this approach allows for a
systematic construction of the various nilpotent orbits by solving
suitable matrix equations in nilpotent generators. Solutions to
these equations belong to the same $G_0^c$-orbit, but in general to different $G_0$-orbits. The final step is to group the solutions under the action of $G_0$. Solutions which are not in the same $G_0$-orbit are  distinguished by certain $G_0$-invariants, amongst others, \emph{tensor classifiers}, that is, signatures of suitable $G_0$-covariant symmetric tensors. This ensures that the classification is complete.

The main difficulty in determining the nilpotent $G_0$-orbits in $\mathfrak{g}_1$ is that such orbits are not completely classified by the intersection of the $G_0^c$-orbits in $\mathfrak{g}_1^c$ and the $G$-orbits in its Lie algebra $\mathfrak{g}$, both of which are known: The former are completely classified by the so-called $\gamma$-labels; the latter by the so-called $\beta$-labels obtained by the Kostant-Sekiguchi Theorem.
These two labels do not provide a complete classification of the real nilpotent orbits, as it was shown in an explicit example in \cite{Chemissany:2012nb}.  Distinct $G_0$-orbits having the same $\gamma$- and the same $\beta$-labels can be characterised using $G_0$-invariant quantities, like tensor classifiers, and will be distinguished by a further label $\delta$.  We refer to Section \ref{tclass} for a precise definition of all the aforementioned labels. 

\subsection{Our main example}\label{secourex}
Of particular interest are the multi-center solutions like the \emph{almost-BPS} ones \cite{Goldstein:2008fq,Bena:2009ev,Dall'Agata:2010dy,Galli:2010mg} and the \emph{composite non-BPS} ones \cite{bossard21}. These are extremal solutions (with zero Hawking temperature), characterised by nilpotent matrices $Q$ and $Q_\psi$. Since the geometries of the horizons surrounding each center are affected by subleading corrections (due to the interaction with the other centers), the regularity of the whole solution implies that each center, if isolated from the others, is regular \cite{bossard21}. Real nilpotent orbits (namely, the $G_0$-orbits in $\mathfrak{g}_1$), as opposed to the complex ones, provide an intrinsic characterisation of regular single center solutions, and thus are a valuable tool for constructing regular multi-center solutions: A necessary condition for a multi-center solution to be regular is that the Noether charge of each of its centers belongs to real orbits which correspond to regular single-center solutions, their sum coinciding with the total Noether charge.\footnote{In \cite{mariodaniele} this statement is made more precise by defining an \emph{intrinsic} $G_0$-orbit for each center, since, strictly speaking, the Noether charges of each constituent black hole do not belong to $G_0$-orbits. This is done by associating with each center an \emph{intrinsic Noether charge matrix} referring to the non-interacting configuration where the distances between the centers are sent to infinity.}  A detailed discussion of this matter  (in relation to multi-center solutions) is given in \cite{mariodaniele}; the aim of the present paper is to  illustrate the importance of considering real nilpotent orbits: we show that a complex orbit contains solutions which, although exhibiting an acceptable behaviour of the metric close to the center and at infinity, feature singularities at finite distances from the center.

For this purpose we consider solutions to the simple $D=4$ STU model, see for instance \cite{Bergshoeff:2008be,bossard21}.  The corresponding effective $D=3$ description of stationary solutions has $G={\rm SO}(4,4)$ as global symmetry group and the scalar fields span the manifold $\mathcal{M}_{\rm scal}=G/G_0'$ with isotropy group \[G_0'=({\rm SO}(2,2))^2={\rm SO}(2,2)\times{\rm SO}(2,2),\]
where  ${\rm SO}(2,2)={\rm SL}_2(\mathbb{R})\times_{\mathbb{Z}_2}{\rm SL}_2(\mathbb{R})$ is a central product of two ${\rm SL}_2(\mathbb{R})$. The Lie group $G_0'$ is locally
isomorphic to \[G_0=({\rm SL}_2(\mathbb{R}))^4\] as it has the same Lie algebra as $G_0$.
The extremal solutions, once we fix $\phi_0\equiv O$, are characterised by a Noether charge matrix $Q$ in some nilpotent orbit of $G_0'$ over the coset space $\mathfrak{g}_1$. This is the example we explicitly work out in the present paper, with the difference that we actually consider $G_0$-orbits rather than $G_0'$-orbits. In Table \ref{tabletot}, we list  $G_0$-orbit representatives and associated $\alpha$-, $\beta$-, and $\gamma$-labels. The classification of the orbits with respect to $G_0$ only differs from that corresponding to $G_0'$ by a simple identification. This identification is described by the following rule,
for whose explanation we refer to \cite{mariodaniele}.
In Table \ref{tabletot},  every pair of $G_0$-orbits which have the same $\alpha$-label in $\{\alpha^{(7)},\ldots,\alpha^{(11)}\}$, coinciding $\gamma$- and coinciding $\beta$-labels, and which are otherwise only distinguished by  $\delta^{(1)}$ and $\delta^{(2)}$, define the same $G_0'$-orbit:  for example, the two $G_0$-orbits with labels $\alpha^{(10)}\beta^{(10;1)}\gamma^{(10;2)}\delta^{(1)}$ and  $\alpha^{(10)}\beta^{(10;1)}\gamma^{(10;2)}\delta^{(2)}$ define the same $G_0'$-orbit. We obtain $145$ orbits under $G_0$, and these  reduce to $101$ orbits under $G_0'$.

 For regular or small (that is, with vanishing horizon area) single-center extremal solutions, $Q$ (in the fundamental representation of $G$) must have a degree of nilpotency not exceeding 3. This restricts the $\alpha$-label to be in the set $\{\alpha^{(1)},\dots,\alpha^{(6)}\}$. More specifically, they are characterised by coinciding $\gamma$- and $\beta$- labels \cite{Kim:2010bf,Chemissany:2012nb}. Orbits with $\alpha$-label in the set $\{\alpha^{(7)},\dots,\alpha^{(11)}\}$ can only describe regular multi-center solutions \cite{bossard21}. Regular single-center solutions correspond to orbits with $\alpha$-label $\alpha^{(6)}$. As far as the static solutions are concerned, we have three types:


\hspace*{-0.5cm}\begin{tabular}{rll}
i)& $\alpha^{(6)}$, $\gamma^{(6;1)}$, $\beta^{(6;1)}$ &(BPS solutions)\\
ii)&$\alpha^{(6)}$, $\gamma^{(6;2,3,4)}$, $\beta^{(6;2,3,4)}$ & (non-BPS solutions with vanishing central charge at the horizon)\\
iii)& $\alpha^{(6)}$, $\gamma^{(6;5)}$, $\beta^{(6;5)}$, $\delta^{(1)}$& (non-BPS solutions with non-vanishing central charge at the horizon).
\end{tabular}

\vspace*{1ex}

\noindent These orbits correspond to the classification of 4-dimensional regular static solutions obtained  in \cite{Bellucci:2006xz}. The complex nilpotent orbits are only characterised by the $\alpha$- and $\gamma$-labels, and thus comprise real orbits with different $\beta$-labels --  some of which describe solutions featuring singularities at finite radial distance from the centers. To illustrate this, it is useful to describe the single-center generic representative of the orbits with $\alpha$-label ranging from $\alpha^{(1)}$ to $\alpha^{(6)}$ in terms of the \emph{generating solution} \cite{Bergshoeff:2008be,Chemissany:2009hq,Chemissany:2012nb} (that is, the representative of the real orbits which depends on the least number of parameters). The space-time metric is expressed in terms of four harmonic functions
\begin{equation}
{\bf H}_0=1-k_0\,\tau=1-\sqrt{2}\,\epsilon_0\,q_0\,\tau\quad\text{and}\quad {\bf H}_\ell=1-k_\ell\,\tau=1-\sqrt{2}\,\epsilon_\ell\,p^\ell\,\tau \quad (\ell=1,2,3),
\end{equation}
where $\epsilon_\ell=\pm 1$ and $\ell=0,1,2,3$ and  $\tau\equiv -1/r$ (with $r$ being the radial distance from the center), and $q_0,p^1,p^2,p^3$ are the electric ($q_0$) and magnetic ($p^\ell$) charges of the solution. The metric of the static solution reads
\begin{align}
{\rm d}s^2&=-e^{2U}\,{\rm d}t^2+e^{-2U}\,({\rm d}r^2+r^2{\rm d}\theta^2+r^2\sin^2(\theta){\rm d}\varphi^2) \quad ,\nonumber\\
e^{-2U}&=\sqrt{{\bf H}_0 {\bf H}_1 {\bf H}_2 {\bf H}_3}\qquad\text{and}\qquad e^{\varphi_i}=\sqrt{({\bf H}_0 {\bf H}_i)/({\bf H}_j {\bf H}_k)}\text{ with } \{i,j,k\}=\{1,2,3\},\quad\label{gsol2}
\end{align}
where $e^{\varphi_i}$ are the imaginary parts of the three complex scalars, the real parts being zero on the solution.
Asymptotic flatness requires $\lim_{r\rightarrow \infty} U(r)=0$.
As shown in \cite{Chemissany:2012nb}, the $\gamma$-label of the orbit only depends on $\epsilon_\ell k_\ell^2$ while the $\beta$-label depends on $\epsilon_\ell k_\ell$, $\ell=0,1,2,3$; they coincide only for $k_\ell>0$, which is the necessary and sufficient condition for regularity. Indeed, if one of the $k_\ell$ were negative, then some of the harmonic functions would have a zero root, and the metric a singularity at finite $r$ (or, equivalently, $\tau$). One can show that this value of $r$ corresponds to a curvature singularity. The area of the horizon is given by
\begin{equation}
A_{\rm H}=4\pi\lim_{\tau\rightarrow-\infty} e^{-2U}/\tau^2=4\pi\sqrt{k_0 k_1 k_2 k_3},\label{AU}
\end{equation}
while the ADM mass reads
\begin{equation}
M_{{\rm ADM}}=\lim_{\tau\rightarrow 0^-}\dot{U}=(k_0+k_1+k_2+k_3)/4\,.\label{ADM}
\end{equation}
We see that if only two of the $k_\ell$ are negative, then the solution can be singular, but with acceptable near-horizon limit (according to (\ref{AU})) and positive ADM mass (\ref{ADM}). This is illustrated in Figure \ref{fig1}, where we consider (single-center) representatives of real orbits within the same complex one. It is shown that within the complex orbit of the  BPS solutions (defined by  $\gamma^{(6;1)}$) and of the non-BPS solutions of type iii) (defined by  $\gamma^{(6;5)}$), one can find solutions (Figures \ref{fig1}a) and \ref{fig1}c)) which cannot be distinguished from the regular ones by the asymptotic behaviour of their metric
at $r\rightarrow 0$ and $r\rightarrow \infty$, but which feature singularities at finite $r$. Such solutions are distinguished from the regular ones by their real nilpotent orbits. Therefore the framework of real nilpotent orbits is the appropriate one to characterise, in an intrinsic algebraic way, the regularity of single and  multi-center solutions to ungauged supergravity. An equivalent approach is to implement regularity directly on the solution, as it is done in \cite{Bossard:2013nwa} where a detailed analysis is made of the composite non-BPS solutions and a characterization of the regularity of each center (in the $G_4$-orbit $I_4<0$) is given as the requirement that a given charge-dependent, Jordan-algebra valued matrix be positive definite.

\begin{figure}[h]
\begin{center}
\centerline{\includegraphics[width=0.9\textwidth]{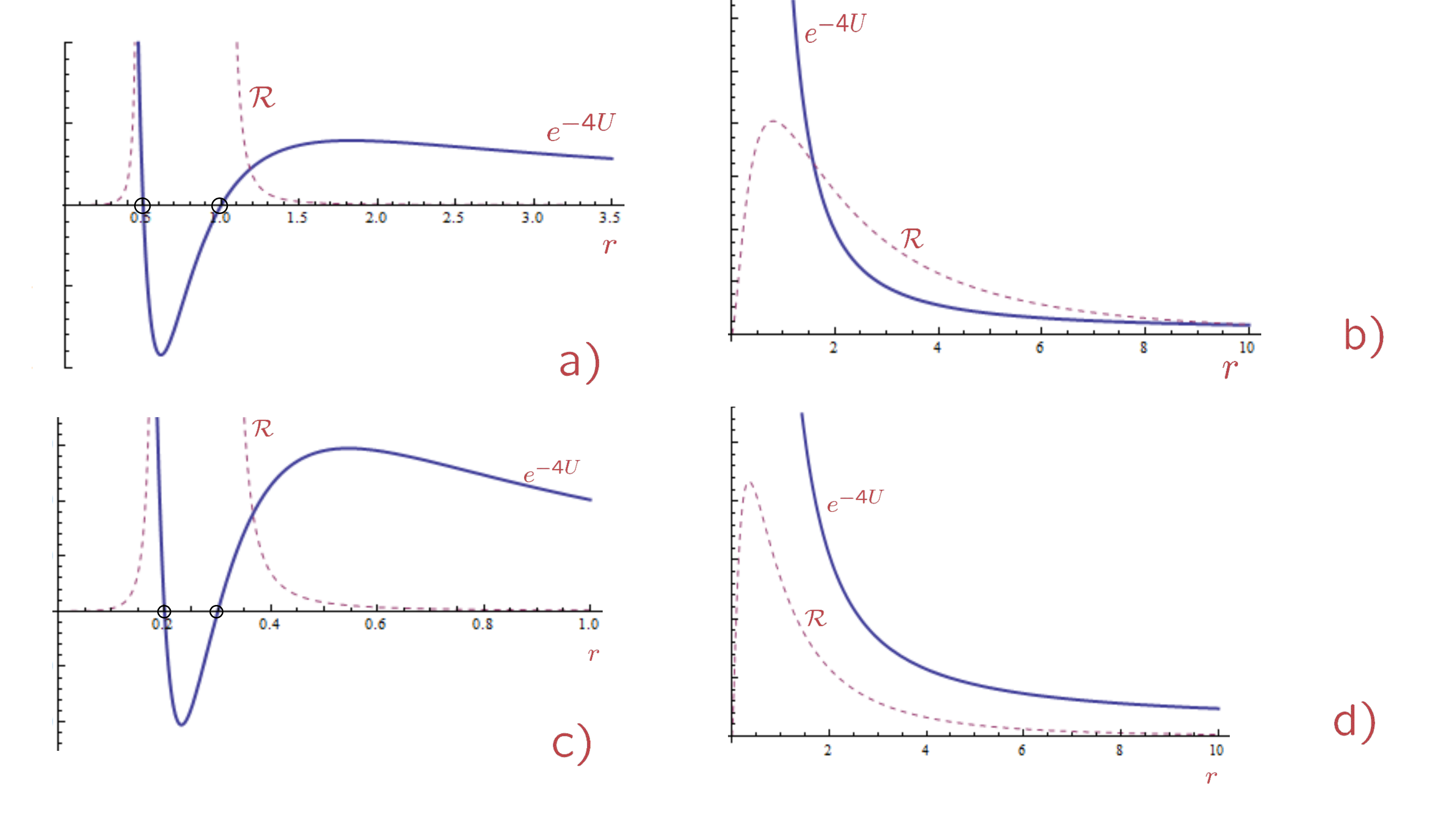}}
 \caption{\small Behaviour of the warp function $e^{-4U}$ and of the Ricci scalar $\mathcal{R}$ against $r$ for representatives of selected real orbits.  Fig.\ a): representative of the $(\gamma^{(6;1)},\,\beta^{(6;2)})$ orbit corresponding to a singular BPS solution (singularities being marked by a circle); Fig.\ b): representative of the $(\gamma^{(6;1)},\,\beta^{(6;1)})$ orbit corresponding to a regular BPS solution; Fig.\ c): representative of the $(\gamma^{(6;5)},\,\beta^{(6;5)},\,\delta^{(2)})$ orbit corresponding to a singular non-BPS solution (singularities being marked by a circle); Fig.\ d): representative of the $(\gamma^{(6;5)},\,\beta^{(6;5)},\,\delta^{(1)})$ orbit corresponding to a regular non-BPS solution of type iii). Solutions a) and b)  belong to the same complex orbit defined by $\gamma^{(6;1)}$, and similarly c) and d)  belong to the same complex orbit defined by $\gamma^{(6;5)}$. Clearly the values of $\mathcal{R}$ and of $e^{-4U}$ refer to different scales. They are plotted in the same graphs to illustrate the corresponding behaviours at the same values of $r$.}\label{fig1}
\end{center}
\end{figure}

Real orbits with $\alpha$-label ranging from $\alpha^{(1)}$ to $\alpha^{(5)}$ and $\gamma=\beta$ describe \emph{small} black holes, namely extremal single center solutions with vanishing horizon area. Such solutions are classically singular, though the singularity coincides with the center ($r=0$). Solutions in orbits with $\gamma\neq\beta$, on the other hand, just as for the  $\alpha^{(6)}$-orbits, feature singularities at finite nonzero $r$. Small black hole solutions were classified in \cite{Borsten:2011ai}: $\alpha^{(1)},\,\gamma^{(1;1)}=\beta^{(1;1)}$ corresponds to the \emph{doubly critical} solutions; $\alpha^{(2)},\,\gamma^{(2;\ell)}=\beta^{(2;\ell)}$, $\alpha^{(3)},\,\gamma^{(3;\ell)}=\beta^{(3;\ell)}$  and $\alpha^{(4)},\,\gamma^{(4;\ell)}=\beta^{(4;\ell)}$ to the \emph{critical} solutions;  $\alpha^{(5)},\,\gamma^{(5;\ell)}=\beta^{(5;\ell)}$ to the \emph{light-like} small solutions. The first representatives (corresponding to $\ell=1$) of each set of orbits describe solutions preserving an amount of supersymmetry (BPS solutions)\par
In \cite{mariodaniele} a composition rule of the 16 real orbits describing regular (small and large) single-center solutions into the higher order $\alpha^{(7)}-\alpha^{(11)}$-orbits is defined: generic representatives $e$ and $e'$ of two regular-single-center orbits $\mathcal{O}$ and $\mathcal{O}'$ are combined into a nilpotent representative of a higher order orbit $\mathcal{O}''$, under the general assumption that the corresponding neutral elements $h$ and $h'$ commute. A composition law is defined $\mathcal{O},\,\mathcal{O}'\rightarrow\mathcal{O}''$ and it is observed that some of the $\alpha^{(7)}-\alpha^{(11)}$-orbits are never obtained in this way. Such orbits are characterized as \emph{intrinsically singular} in that they contain no regular solution. They are represented by empty slots in Figure \ref{Figfigs}.

\section{The mathematical framework}\label{secsetup}

\noindent  In this section we define a symmetric pair $(\g,\g_0)$ and a class of Lie groups $G_0$ acting on $\g_1$; we also introduce the main example motivated in Section \ref{secourex}. The notation introduced in this section is retained throughout the paper.

\subsection{The symmetric pair $(\g,\g_0)$}
Let $\g$ be a semisimple Lie algebra over the real numbers. We assume throughout
that $\g$ is {\em split}, that is, it has a Cartan subalgebra $\h$ that is split over
the reals;  every complex semisimple Lie algebra contains such a split real form (see \cite[Corollary 6.10]{knapp}). Let $\varphi \colon \g \to \g$ be an automorphism of order 2 with eigenspace decomposition  $\g = \g_0 \oplus \g_1$, where $\varphi$ has eigenvalue $(-1)^i$ on $\g_i$. The pair $(\g,\g_0)$ is a {\em real symmetric pair}; note that the decomposition $\g = \g_0 \oplus \g_1$ is a $\Z/2\Z$-grading of $\g$, that is,
$[\g_i,\g_j] \subseteq \g_{i+j\bmod 2}$ for $i,j\in \{0,1\}$.

Let $\theta \colon \g\to \g$ be a Cartan involution of $\g$ commuting with
$\varphi$; such a Cartan involution exists and is unique up to conjugacy by an element in $\exp(\ad_\g( \g_0))$, see for example \cite[Theorem 1.1]{seki}.
Let $\g = \fk \oplus \fp$ be the Cartan decomposition associated with $\theta$; this decomposition is also a $\Z/2\Z$-grading of $\g$. Since any Cartan
subalgebra of $\g$ is conjugate to a $\theta$-stable Cartan subalgebra
(\!\cite[Proposition 6.59]{knapp}), we may assume that the split Cartan
subalgebra $\h$ of $\g$ is $\theta$-stable; this implies that $\h\subseteq
\fp$, because $\h$ is split.
Since $\theta$ and $\varphi$ commute, the two gradings are compatible, that is $\g_0 =(\g_0\cap \fk)\oplus (\g_0\cap \fp)$, and similarly for $\g_1$.

We exemplify the results of our paper by a detailed discussion of the following example, which is motivated by the discussion in Section \ref{secourex}.

\begin{example}\label{exa:1}
Let $\g$ be the real Lie algebra defined by
\[\g = \{ X\in \gl_8(\R) \mid X^\intercal M = -MX \}\quad\text{where}\quad  M = \left(\begin{smallmatrix} 0 & I_4 \\ I_4 & 0
\end{smallmatrix}\right),\]
with $I_4$ the $4\times 4$ identity matrix; this is the split Lie algebra of type $D_4$, see \cite[Theorem IV.9]{jac}. Here we consider the involution $\varphi\colon\g\to\g$, $X\mapsto DXD$, where $D=\diag(1,1,-1,-1,1,1,-1,-1)$. A Cartan involution
of $\g$  commuting with $\varphi$ is negative-transpose, that is, $\theta(X) = -X^\intercal$. These involutions can also conveniently be described by their action on a suitable generating set of $\g$. In the following let  $e_{ij}$ be the
$8\times 8$ matrix with a $1$ on position $(i,j)$ and zeros elsewhere; for  $i\in\{1,\ldots,4\}$ define $d_i = e_{ii}-e_{4+i,4+i}$, so that $\{d_1,\ldots,d_4\}$ spans a Cartan subalgebra of $\g$.  Now define  $h_1 = d_1-d_2$, $h_2=d_2-d_3$, $h_3=d_3+d_4$,
$h_4=d_3-d_4$, $e_1 = e_{12}-e_{65}$, $e_2=e_{23}-e_{76}$, $e_3=e_{38}-e_{47}$,
$e_4=e_{34}-e_{87}$, and, lastly, $f_i = e_i^\intercal$ for $1\leq i\leq 4$. A straightforward computation shows that these elements satisfy the following relations
\begin{equation*}
\begin{aligned}{}
[h_i,h_j]& = 0,\qquad & [h_i,e_j]& = C_{j,i} e_j, \\
[e_i,f_j]& = \delta_{ij} h_i,&[h_i,f_j]& = -C_{j,i}f_j,
\end{aligned}
\end{equation*}
where $\delta_{ij}$ is the Kronecker delta and $C_{i,j}$ is the entry $(i,j)$ of the Cartan matrix $C$ of the
root system of type $D_4$ with Dynkin diagram
\begin{center}
\scalebox{1.4}{\begin{picture}(70,17)
  \put(18,0){\circle{6}}
\put(15,4){{\tiny 1}}
  \put(33,0){\circle{6}}
  \put(48,0){\circle{6}}
  \put(33,15){\circle{6}}
\put(45,4){{\tiny 4}}
\put(29,4){{\tiny 2}}
\put(36,15){{\tiny 3}}
  \put(21,0){\line(1,0){9}}
  \put(36,0){\line(1,0){9}}
  \put(33,3){\line(0,1){9}}
\end{picture}}\label{d4}
\end{center}
By \cite[\S IV.3]{jac}, the set $\{h_i,e_i,f_i\mid i\in\{1,\ldots,4\}\}$ is a {\em canonical generating set} of
$\g$; in particular, an automorphism of $\g$ is uniquely
determined by its values on these elements. It follows readily from the definition that $\theta(e_i) =-f_i$,
$\theta(f_i) = -e_i$, and $\theta(h_i) = -h_i$ for all $i$; moreover,  $\varphi(e_i) =e_i$, $\varphi(f_i) = f_i$ for $i\neq 2$, $\varphi(e_2)=-e_2$, $\varphi(f_2) = -f_2$, and $\varphi(h_j) = h_j$ for all $j$. It follows that  $\dim \fk =\dim \g_0 = 12$ and $\dim \fp = \dim \g_1=16$. It is straightforward to work out bases for these subspaces; for example,
$\g_0$ is spanned by $\{h_i,e_i,f_i\mid i=1,3,4\}$ along with $e_0 = e_{16}-e_{25}$,
$f_0 = e_0^\intercal$, and $h_0 = d_1+d_2$. It follows that  $\g_0$ is isomorphic to the
direct sum of four copies of $\sl_2(\R)$.\exdone
\end{example}

Here and in the sequel we denote by  $\g^c=\C\otimes_\R \g$ the complexification of $\g$ and by $\ad \colon \g^c \to \End(\g^c)$ its adjoint map, that is, $\ad(x)\colon \g^c\to\g^c$, $y\mapsto [x,y]$. If we use the adjoint map of a different Lie
algebra, then we use a subscript, for example $\ad_\g\colon \g\to\End(\g)$. Note that $\varphi$ lifts to an involution of $\g^c$, with eigenspace
decomposition $\g^c = \g_0^c \oplus \g_1^c$.

\subsection{The Lie groups $G_0$}
We continue with the notation of the previous section, and  denote by $G^c$ the adjoint group of $\g^c$.
This group can be characterised in various ways: It is the connected
algebraic subgroup of $\GL(\g^c)$ with Lie algebra $\ad(\g^c)$ (see \cite[\S 1.2]{colmcgov}); it is also the connected component of the automorphism group of $\g^c$, and generated by inner automorphisms $\exp( \ad (x))$ with $x\in \g$ (see \cite[(I.7)]{onishchik}). In any way,  $G^c$ is a subgroup of the automorphism group of $\g^c$. We denote by $G_0^c$  the connected algebraic subgroup of $G^c$ with Lie algebra
$\ad (\g_0^c)$; alternatively, this is the subgroup of $G^c$ generated by all
$\exp( \ad (x))$ with $x\in \g_0^c$.

 Let $\mathcal{N}_1^c$ be the set of nilpotent elements in $\g_1^c$; the determination of $G_0^c$-orbit representatives in $\mathcal{N}_1^c$ has been discussed in the literature and we recall some of
the main results in Sections \ref{secsl2comp} and \ref{seccacomp}. Our main focus here is the determination of representatives in $\mathcal{N}_1$, the set of nilpotent elements in $\g_1$, under the action of a suitable group $G_0$. There are different interesting choices for $G_0$. For example, one could define $G_0$ as the adjoint group of $\g_0$, which is the analytic subgroup of $\GL(\g_0)$ with Lie algebra $\ad_\g(\g_0)$ (see \cite[\S II.5]{helgason}). Alternatively, one could define $G_0$ as the set of real points $G_0^c(\R)$, that is, the subgroup of  elements of $G_0^c\subseteq \GL(\g^c)$ whose matrix (with respect to some fixed basis of $\g^c$ consisting of elements of $\g$) has real entries only.
We aim to provide a framework which allows us to deal with several different choices of $G_0$.

More precisely, here we define a group $G_0$ acting on $\g_1$ as follows; we exemplify our construction in  Example \ref{exa:2} below. We start with an isomorphism of algebraic groups $R^c\colon \widetilde{G}_0^c\to G_0^c$, where $\widetilde{G}_0^c$ is a connected algebraic subgroup of $\GL_k(\C)$ for some $k$;
we also assume that both $\widetilde{G}_0^c$ and $R^c$ are defined over $\R$.
We define $\widetilde{G}_0 =\widetilde{G}^c_0(\R)$ as the group consisting of all $g\in  \widetilde{G}^c_0$ with coefficients in $\R$, and then define  \[G_0 = R^c(\widetilde{G}^c_0(\R)).\]  It follows from \cite[\S 7.3]{borel} that the isomorphism $R^c$ induces a (not necessarily surjective) embedding of  $\widetilde G_0^c(\R)$ into $G_0^c(\R)$ which maps the identity component of  $\widetilde G_0^c(\R)$ onto that of $G_0^c(\R)$; moreover  $G_0$ is closed and has
 finite index in $G_0^c(\R)$. We note that the group $G_0^c(\R)$ has finitely many connected components, cf.\ \cite[p.\ 276 (c)(i)]{borel2}.  In conclusion, we have \[(G_0^c(\R))^\circ \leq G_0\leq G_0^c(\R)\leq \Aut(\g)\leq\Aut(\g^c).\]
Here $(G_0^c(\R))^\circ$ denotes the identity component  of $G_0^c(\R)$ (in the real Euclidean
topology), which is the same as the identity component of
$G_0$. The groups  $(G_0^c(\R))^\circ$, $G_0$, and $G_0^c(\R)$ all have the same
Lie algebra, which is $\ad_\g (\g_0)\cong \g_0$.

Recall that a Lie algebra is reductive if it is the direct sum of its semisimple derived subalgebra and its center. It is well-known that $\g_0$ is a reductive Lie algebra; more precisely, it is ``reductive in $\g$'' (see for example \cite{dfg2}). There exist different definitions for a Lie group to be reductive; here we use  the quite technical definition given in  \cite[Section VII.2]{knapp}), mainly because this allow us to use the results in  \cite[Chapter VII]{knapp} for reductive groups.

\begin{definition}\label{def:red}
A real Lie group $\G$ is reductive if there is a quadruple $(\G,\K,\eta,\B)$, where
$\K\leq \G$ is a compact subgroup, $\eta$ is an involution
of the Lie algebra $\mc{g}$ of $\G$, and $\B$ is a nondegenerate $\Ad(\G)$-
and $\eta$-invariant bilinear form on $\mc{g}$, such that the following hold:
\begin{items}
\item[(i)] $\mc{g}$ is reductive,
\item[(ii)] the $\pm1$-eigenspace decomposition of  $\eta$ is $\mc{g}=\mc{k}\oplus \mc{p}$, where $\mc{k}$ is the
Lie algebra of $\K$,
\item[(iii)] $\mc{k}$ and $\mc{p}$ are orthogonal under $\B$, and $\B$ is negative
definite on $\mc{k}$ and positive definite on $\mc{p}$,
\item[(iv)] the multiplication map $\K\times \exp(\mc{p}) \to \G$ is a
surjective diffeomorphism,
\item[(v)] for each $g\in \G$, the automorphism $\Ad(g)$ of $\mc{g}^c$ is inner, that is, it lies in
$\Int(\mc{g}^c)$.
\end{items}
\end{definition}
If $\G$ is a closed linear Lie group, then $\Ad(g)(x) =
gxg^{-1}$ for $x\in \mc{g}$ and $g\in\G$, see \cite[p.\ 79]{knapp}. Recall that $\Int(\mc{g}^c)$ is the analytic
subgroup of $\Aut(\mc{g}^c)$ with Lie algebra $\ad(\mc{g}^c)$, generated by all $\exp( \ad(x))$ with $x\in \mc{g}^c$, see \cite[p.\ 1]{onishchik}. It contains the connected algebraic subgroup of
$\Aut(\mc{g}^c)$ with Lie algebra $\ad(\mc{g}^c)$.

\begin{proposition}\label{prop:red}
The group $G_0$ is a reductive Lie group.
\end{proposition}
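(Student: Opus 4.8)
The plan is to verify the five conditions of Definition \ref{def:red} directly for $G_0$, with the Cartan involution $\theta$ (restricted appropriately) playing the role of $\eta$ and a suitable rescaling of the Killing form of $\g$ playing the role of $\B$. First I would recall that $G_0$ acts on $\g_0$ via the adjoint action, so $\mc{g}=\ad_\g(\g_0)\cong\g_0$ is the relevant Lie algebra; condition (i) is then exactly the well-known fact, already quoted in the excerpt, that $\g_0$ is reductive. For $\eta$ I would take the involution of $\g_0$ induced by the Cartan involution $\theta$ of $\g$ (which commutes with $\varphi$ and hence preserves $\g_0$); its $\pm1$-eigenspaces are $\g_0\cap\fk$ and $\g_0\cap\fp$ by the compatibility of the two gradings noted just before Example \ref{exa:1}. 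For $\B$ I would use the restriction to $\g_0$ of the form $B_\theta(x,y)=-\kappa(x,\theta y)$, where $\kappa$ is the Killing form of $\g$; this is $\Ad(G_0)$- and $\eta$-invariant, and it is negative definite on $\g_0\cap\fk$ and positive definite on $\g_0\cap\fp$ since $B_\theta$ has these properties on all of $\g$. (One must take a little care that $\kappa$ restricted to $\g_0$ is the object one wants; since $\g_0$ is reductive rather than semisimple the Killing form of $\g_0$ itself may be degenerate on the center, so using the ambient $B_\theta$ is the clean choice, and it is nondegenerate on $\g_0$ because $\g_0^\perp$ with respect to $B_\theta$ meets $\g_0$ trivially.)

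Next I would set $\K$ to be the subgroup of $G_0$ whose Lie algebra is $\g_0\cap\fk$; concretely, $\K=G_0\cap\K_0$ where $\K_0$ is the maximal compact subgroup of $G_0^c(\R)$ coming from $\theta$, so $\K$ is compact and condition (ii) holds by construction. For the global Cartan decomposition (iv), the idea is to pull back the known Cartan decomposition of $G_0^c(\R)$ (or of the reductive group $G_0^c$ in Knapp's sense) along the chain $(G_0^c(\R))^\circ\leq G_0\leq G_0^c(\R)$ established in the excerpt: since all three groups share the Lie algebra $\ad_\g(\g_0)$ and $G_0$ is a closed subgroup of finite index in $G_0^c(\R)$, the diffeomorphism $\K_0\times\exp(\g_0\cap\fp)\to G_0^c(\R)$ restricts to a diffeomorphism onto $G_0$; here one uses that $\exp(\g_0\cap\fp)$ already lies in the identity component, so the component group of $G_0$ is carried entirely by $\K$. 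Finally, condition (v) is immediate from the last displayed containment in the excerpt: $G_0\leq G_0^c(\R)\leq\Aut(\g)\leq\Aut(\g^c)$, and more precisely $G_0\leq G^c$, the adjoint group of $\g^c$, which is by definition contained in $\Int(\g^c)$; so every $\Ad(g)=g$ (acting on $\g_0^c$, extended to $\g^c$) is inner. Condition (iii) was already handled above with the choice of $\B$.

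The main obstacle I expect is condition (iv), the global Cartan (polar) decomposition, and in particular making rigorous that it descends correctly to the possibly-disconnected group $G_0$ rather than just holding for $(G_0^c(\R))^\circ$. The subtle point is that $G_0$ need not equal $G_0^c(\R)$ nor $(G_0^c(\R))^\circ$: it is sandwiched strictly between them in general. One wants to argue that because $G_0$ is closed, $\theta$-stable (its construction via $R^c$ and real points is compatible with $\theta$, which one should check), and has finite index in $G_0^c(\R)$, the standard Cartan decomposition theorem for closed $\theta$-stable subgroups of a reductive group (as in \cite[Chapter VII]{knapp}, or the classical statement that a closed subgroup of $\GL_n(\R)$ stable under transpose-inverse is the product of its intersection with $\mathrm{O}(n)$ and the exponential of the symmetric part of its Lie algebra) applies. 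So the real work is: (a) observe $G_0$ is stable under the Cartan involution $\Theta$ of $G_0^c(\R)$ lifting $\theta$; (b) invoke the structure theorem to get $G_0=(G_0\cap\K_0)\exp(\g_0\cap\fp)$ with the map a diffeomorphism; (c) identify $\g_0\cap\fp$ as the $+1$-part and conclude. Everything else is bookkeeping with definitions already set up in the excerpt.
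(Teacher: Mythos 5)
Your overall skeleton --- verifying the five conditions of Definition \ref{def:red} with $\eta=\theta$ and $\K=G_0\cap K$ --- matches the paper's, but two of your ingredients are wrong as stated. First, the form $B_\theta(x,y)=-\kappa(x,\theta y)$ cannot serve as $\B$: it is the \emph{inner product} attached to the Cartan involution and is positive definite on all of $\g$ (for $0\neq x\in\fk$ one has $\theta x=x$ and $\kappa(x,x)<0$, so $-\kappa(x,\theta x)>0$), which contradicts the requirement in (iii) that $\B$ be negative definite on $\mc{k}$; moreover $B_\theta(\Ad(g)x,\Ad(g)y)=B_\theta(x,y)$ only when $g$ commutes with $\theta$, so $B_\theta$ is not $\Ad(G_0)$-invariant. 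The paper keeps this inner product only to define the compact subgroup $K$, and takes $\B=\kappa$ itself (transported to $\ad_\g(\g_0)$), which has the right signs on $\ad_\g(\fk_0)$ and $\ad_\g(\fp_0)$ and the right invariance; nondegeneracy on $\g_0$ holds because $\g_0$ and $\g_1$ are $\kappa$-orthogonal. Second, your verification of (v) proves the wrong statement: the definition asks that $\Ad(g)$ be inner as an automorphism of $\mc{g}^c\cong\g_0^c$, i.e.\ that it lie in $\Int(\g_0^c)$, not in $\Int(\g^c)$; an automorphism of $\g_0^c$ that extends to an inner automorphism of the ambient algebra need not be inner on $\g_0^c$. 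The correct argument, as in the paper, uses that $G_0\leq G_0^c$ and that $G_0^c$ is generated by the elements $\exp(\ad(x))$ with $x\in\g_0^c$, whose restrictions to $\ad(\g_0^c)$ are visibly the inner automorphisms $\Ad(\exp(\ad_{\g_0^c}(x)))$.

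For (iv) you correctly isolate the delicate point, but your route differs from the paper's and, as written, leans on a false general statement: a closed subgroup of $\GL_n(\R)$ stable under transpose-inverse need \emph{not} decompose as the product of its intersection with the orthogonal group and the exponential of the symmetric part of its Lie algebra; one needs the subgroup to be (the real points of) a self-adjoint algebraic group, or otherwise reductive with finitely many components. That hypothesis does hold for $G_0^c(\R)$, and your descent to the finite-index subgroup $G_0$ then goes through because $\exp(\ad_\g(\g_0\cap\fp))$ lies in the identity component, so your route is salvageable with the right citation. The paper instead avoids invoking any Cartan decomposition for the reductive group $G_0^c(\R)$: it first establishes (iv) for the \emph{semisimple} group $G=G^c(\R)$ via Onishchik--Vinberg, and then observes that conjugation by $\varphi$ fixes $G_0$ pointwise and is compatible with the polar decomposition $g=k\exp(\ad_\g(x))$ in $G$; uniqueness of that decomposition forces $\varphi k\varphi^{-1}=k$ and $\varphi(x)=x$, hence $x\in\g_0\cap\fp$ and $k\in K\cap G_0$. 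This $\varphi$-fixed-point argument is what you would need to supply if you prefer to work inside $G$ rather than cite the algebraic-group polar decomposition for $G_0^c(\R)$.
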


\begin{proof}
We first show that $G=G^c(\R)$ is reductive. Firstly, $G$ has Lie algebra $\ad_\g(\g)$, which is semisimple, hence reductive. It follows from \cite[\S5.(5)]{onishchik} that  $\ad_\g(\g)$ has the inner product $(\ad_\g(x),\ad_\g(y))=-\kappa(x,\theta(y))$, where $\kappa$ is the Killing form of $\g^c$.
Let $O(\ad_\g(\g))$ be the group of all bijective endomorphisms of $\ad_\g(\g)$ that leave this
inner product invariant, and  define  $K=\{g\in G\mid \Ad(g)\in O(\ad_\g(\g))\}$. Define a bilinear form $B$ on $\ad_\g(\g)$  by $B(\ad\nolimits_\g(x),\ad\nolimits_\g(y))=\kappa(x,y)$. We extend the Cartan involution $\theta$ of $\g$ to an automorphism of
$\ad_\g (\g)$ by setting  $\theta(\ad_\g (x)) = \ad_\g (\theta(x))$.  We claim
that $(G,K,\theta,B)$ satisfies Definition \ref{def:red}. Similarly to
$\ad_\g(\g)$, the Lie algebra $\g$ has an inner product defined by $(x,y) =
-\kappa(x,\theta(y))$. Let $O(\g)$ be the group of bijective endomorphisms
of $\g$ leaving this inner product invariant. Using \cite[Lemma 1.118]{knapp},
one sees that $K= G\cap O(\g)$,
and as $K$ is closed, it follows that it is compact.
By \cite[Proposition 1.119]{knapp},  the Killing form is invariant under
$\Aut(\g^c)$, which implies that $B$ is $\Ad(G)$- and $\theta$-invariant; since $\kappa$ is nondegenerate on $\g$, so is $B$. Clearly,  $\theta$ is a Cartan involution with Cartan decomposition $\ad_\g(\g)=\ad_\g(\fk)\oplus\ad_\g(\fp)$.  Since the latter is  a $\Z/2\Z$-grading, $\ad_\g(\fk)$ and $\ad_\g(\fp)$ are orthogonal under
$B$. Moreover, $B$ is positive definite on $\ad_\g(\fp)$ and negative
definite on $\ad_\g(\fk)$ by \cite[\S5 (5) \& (6)]{onishchik}. Note that the Lie algebra of $G$ is semisimple, so that $G$ is semisimple (see \cite[p.\ 56]{onvi}). This allows us to apply \cite[\S 5.3, Theorem 2 \& Corollary 2]{onvi}, which proves that (iv) of Definition \ref{def:red} holds, and that the Lie algebra of $K$ is $\ad_\g(\fk)$.  To establish (v) of Definition \ref{def:red}, consider $g\in G$. Since $G$ is a closed linear group,  $\Ad(g)(\ad_\g(x)) = g(\ad_\g(x))g^{-1}$ for all $x\in\g$ by \cite[p.\ 79]{knapp}; now \cite[Lemma 1.118]{knapp} shows that $g(\ad_\g(x))g^{-1}=\ad_\g g(x)$, which proves that $\Ad(g)$ is induced by the automorphism $g$; as $G\leq G^c\leq \Int(\g^c)$, we obtain (v).

Now we consider $G_0$. By abuse of notation we also use the symbols
$\theta$ and $B$ to denote
their restrictions to $\ad_\g(\g_0)$. Define $K_0=K\cap G_0$ where $K$ is as defined in the discussion of $G$ above; we claim
that $(G_0,K_0,\theta,B)$ satisfies Definition \ref{def:red}. Clearly, $G_0$ is a real Lie group with reductive Lie algebra $\ad_\g(\g_0)$.
Write $\fk_0=\g_0\cap\fk$ and $\fp_0=\g_0\cap\fp$, and note that
$\ad_\g(\g_0)=\ad_\g(\fk_0)\oplus\ad_\g(\fp_0)$ is the $\pm1$-eigenspace
decomposition of $\theta$; as before, this decomposition is orthogonal with
respect to $B$, and $B$ is positive definite on $\ad_\g(\fp_0)$  and negative
definite on $\ad_\g(\fk_0)$. Since the Killing form is nondegenerate on $\g$, it follows that $B$ is nondegenerate on $\ad_\g(\g_0)$. Since $B$ is $\text{Ad}(G)$-invariant, it is also $\text{Ad}(G_0)$-invariant; clearly, $B$ is $\theta$-invariant. Let $x\in \g_0^c$, $g=\exp(\ad(x))\in G_0^c$, and $\hat g =
\exp( \ad_{\g_0^c}(x))$. Then the restriction of $\Ad(g)$ to $\ad(\g_0^c)$ is
equal to $\Ad(\hat g)$. It follows that $g\in \Int(\g_0^c)$.
The group $G_0^c$ is generated by all $\exp( \ad(x))$ with $x\in \g_0^c$, thus $G_0^c \leq \Int(\g^c)$ and each  $\Ad(g)$ with $g\in G_0$ lies in $\Int(\g^c)$; this establishes (v) of Definition \ref{def:red}. Now we consider (iv) of Definition \ref{def:red}. Conjugation by $\varphi$
is an automorphism of $\Aut(\g^c)$, so it stabilises the identity component $G^c=\Aut(\g^c)^\circ$. Since $\varphi$ is defined over $\R$, conjugation by it is an automorphism of $G=G^c(\R)$. Furthermore, for $x\in \g_0$ we have
$\varphi \exp( \ad(x))\varphi^{-1} = \exp( \ad(\varphi(x)) ) = \exp( \ad (x))$,
so that conjugation by $\varphi$ is the identity on $G_0$.
We consider $\varphi$ as an automorphism of $\ad_\g(\g)$ via $\varphi(\ad_\g(x))=\ad_\g(\varphi(x))$. Since $\varphi$ and $\theta$ commute, the inner product of $\ad_\g(\g)$ defined above satisfies \[(\varphi(\ad\nolimits_\g(x)),\varphi(\ad\nolimits_\g(y)))=-\kappa(\varphi(x),\theta(\varphi(y)))=-\kappa(\varphi(x),\varphi(\theta(y)))= (\ad\nolimits_\g(x),\ad\nolimits_\g(y)),\]
where the last equation follows from \cite[Lemma 1.119]{knapp}. This implies that if $k\in K=G\cap O(\g)$, then also $\varphi k \varphi^{-1}\in K$. Now let $g\in G_0$; since $g\in G$, we can write $g=k \exp(\ad_\g(x))$ for uniquely determined $k\in K$ and $x\in \fp$, cf.\ part (iv) for the reductive tuple $(G,K,\theta,B)$.  Because $g= \varphi g\varphi^{-1}$ we have
\[k\exp(\ad\nolimits_\g(x))=\varphi k \varphi^{-1} \varphi \exp (\ad\nolimits_\g(x)) \varphi^{-1} = \varphi k \varphi^{-1} \exp(\ad\nolimits_\g(\varphi(x))).\]

As $\varphi k \varphi^{-1}\in K$ and $\varphi(x)\in \fp$, we conclude that
$\varphi k \varphi^{-1}=k$ and $\varphi(x)=x$ by uniqueness. In particular, $x\in \fp_0$, so that $\exp( \ad_\g(x))\in G_0$ and therefore also $k\in G_0$.
As $K_0\subseteq K$ is closed, the group $K_0$ it is compact, and the Lie algebra of $K_0$ is the intersection of the Lie algebras of $K$ and $G_0$
(this follows immediately from the standard definition of the Lie algebra of
a linear Lie group, see \cite{hine}, Definition 4.1.3),
therefore, it is $\ad_\g(\fk_0)$.
\end{proof}

\begin{example}\label{exa:2}
We continue with the notation of Example \ref{exa:1}, and we denote the basis elements of $\g^c$
by the same symbols as for $\g$. Let $\tilde\g_0^c$ be the direct sum
of four copies of $\sl_2(\C)$, seen as a subalgebra of $\gl_{8}(\C)$ in the
natural way, that is, the elements of $\tilde\g_0^c$ are block-diagonal matrices,
where each block is of size $2\times 2$ and corresponds to a copy of
$\sl_2(\C)$. Let $\{\tilde h_i, \tilde e_i, \tilde f_i\mid i=0,1,3,4\}$ be
a basis of $\tilde \g_0^c$ such that  $r^c \colon \tilde\g_0^c\to
\g_0^c$ with $r^c(\tilde e_i) =e_i$ and $r^c(\tilde f_i) = f_i$ defines an isomorphism. Let $\widetilde{G}^c_0$ be the direct product of four copies of $\SL_2(\C)$, embedded in $\GL_{8}(\C)$ in the same fashion as $\tilde\g_0^c$ is embedded in
$\gl_{8}(\C)$, so that the Lie algebra of $\widetilde{G}^c_0$ is $\tilde\g_0^c$. By construction, there is a surjective homomorphism of algebraic groups
$R^c \colon \widetilde{G}^c_0 \to G_0^c$ satisfying $R^c(\exp( u )) =
\exp( \ad (r^c(u)))$ for all $u\in \tilde \g_0^c$, cf.\ \cite[\S 5, Corollary 1]{steinb}. Note that $\widetilde{G}_0=\widetilde{G}_0^c(\R)$, which is the  direct product of four copies of $\SL_2(\R)$,
and we set $G_0 = R^c(\widetilde{G}_0)$. Let $V_2^c$ be the natural $\SL_2(\C)$-module. We write an element of $\widetilde{G}_0^c$ as a tuple $(g_1,g_2,g_3,g_4)$, where $g_i$ lies in the $i$-th copy of $\SL_2(\C)$. Define $\rho^c \colon \widetilde{G}_0^c \to \GL(V_2^c\otimes V_2^c\otimes V_2^c\otimes V_2^c)$
by $\rho^c(g_1,\ldots,g_4)(v_1\otimes \cdots \otimes v_4) = (g_1v_1)\otimes
\cdots \otimes (g_4v_4)$. Similarly, let $V_2$ be the natural $\SL_2(\R)$-module
and define $\rho \colon\widetilde{G}_0 \to \GL(V_2\otimes V_2\otimes V_2\otimes
V_2)$ in the analogous way. By comparing weights
one sees that $\rho^c$ is
equivalent to the representation $\widetilde{G}_0^c \to G_0^c \to \GL(\g_1^c)$. Note that there are bases of $V_2^c\otimes \ldots\otimes V_2^c$ and $\g_1^c$ such that the matrix of a given element of $\widetilde G_0^c$ is the same with respect to both bases; these bases have coefficients in $\R$. Thus, everything is defined over $\R$, and it follows that $\rho$ is equivalent to the representation $\widetilde{G}_0 \to G_0 \to \GL(\g_1)$. In conclusion, determining the
orbits in $V_2\otimes\ldots\otimes V_2$ under the action of $\widetilde{G}_0=\SL_2(\R)\times\ldots\times \SL_2(\R)$ is equivalent to determining the
$G_0$-orbits in $\g_1$. Lastly, we note that $G_0\ne G_0^c(\R)$:  For $t\in\C$ let
$w_1(t) = \exp( t\tilde e_1) \exp(-t^{-1} \tilde f_1 )\exp(t\tilde e_1)$, and
$a= w_1(\imath) w_1(1)^{-1}$, where $\imath$ denotes the imaginary unit.
Similarly we define $w_4(t)$ and $b=w_4(\imath)w_4(1)^{-1}$. Then
$a,b\in \widetilde{G}_0^c$, and $R^c(ab)$ is an automorphism of $\g^c$ defined
over $\R$. So $R^c(ab)\in G_0^c(\R)$. But $R^c(ab)$ does not lie in
$G_0$.

As basis of $V_2\otimes\ldots\otimes V_2$ we take the  eight vectors of the form $v_1^{(\pm)}\otimes v_2^{(\pm)}\otimes v_3^{(\pm)}\otimes v_4^{(\pm)}$, where each $v_k^{(+)}=(1,0)$ and $v_k^{(-)}=(0,1)$ are eigenvectors of the Cartan subalgebra generator ${\rm diag}(1,-1)$ of the $k$-th copy of $\sl_2(\R)$. Moreover, we  adopt the following short-hand notation and write
\[
(\pm,\pm,\pm,\pm)\equiv v_1^{(\pm)}\otimes v_2^{(\pm)}\otimes v_3^{(\pm)}\otimes v_4^{(\pm)}.
\]

\vspace*{-0.8cm}
~ \exdone

\end{example}

\section{Nilpotent orbits from $\sl_2$-triples}\label{triples}
\noindent We continue with  the notation introduced in Section \ref{secsetup}; recall that an $\sl_2$-triple in $\g^c$ is a tuple $(h,e,f)$ of elements in $\g^c$ satisfying $[h,e]=2e$, $[h,f]=-2f$, and $[e,f]=h$.  In this section we describe how $\sl_2$-triples can be used to investigate the nilpotent orbits of a symmetric pair. Although our main concern lies with
real symmetric pairs, we start with some remarks regarding the complex case.
We do this for two reasons: firstly because some of those remarks are
also needed when we deal with real symmetric pairs, secondly because the
starting point of the method that we propose for the real case is the list of
complex nilpotent orbits.

\subsection{The complex case}\label{secsl2comp}
We start with a lemma, proved in \cite[Proposition 4 and Lemma 4]{kora}.

\begin{lemma}\label{lem:sl2C}
 Every nonzero nilpotent $e\in \g_1^c$ can be embedded in a homogeneous $\sl_2$-triple $(h,e,f)$, that is, $h\in \g_0^c$ and $e,f\in \g_1^c$. If  $(h,e,f)$ and $(h',e',f')$ are homogeneous $\sl_2$-triples, then $e$ and $e'$
are $G_0^c$-conjugate if and only if the two triples are $G_0^c$-conjugate,
if and only if $h$ and $h'$ are $G_0^c$-conjugate.
\end{lemma}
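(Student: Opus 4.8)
The plan is to reduce everything to known statements about $\sl_2$-triples in the complex reductive Lie algebra $\g_0^c$ acting on $\g_1^c$, using Vinberg's theory of $\theta$-groups. First I would recall the existence of a homogeneous $\sl_2$-triple: given a nonzero nilpotent $e\in\g_1^c$, by the Jacobson--Morozov theorem $e$ embeds in \emph{some} $\sl_2$-triple $(h,e,f)$ in $\g^c$. The point is that one can choose this triple homogeneous, i.e.\ $h\in\g_0^c$, $f\in\g_1^c$. This is the content of the cited result \cite[Proposition 4 and Lemma 4]{kora}; the standard argument uses the $\Z/2\Z$-grading together with a conjugacy/averaging argument (conjugating a given triple by an element of $G_0^c$ so that its semisimple part lands in $\g_0^c$ and the nilpositive/nilnegative parts become homogeneous). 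I would simply invoke this.

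Next I would prove the chain of equivalences. The implication ``the two triples are $G_0^c$-conjugate $\Rightarrow$ $e$ and $e'$ are $G_0^c$-conjugate'' is immediate, as is ``the triples are conjugate $\Rightarrow$ $h$ and $h'$ are conjugate.'' So the work is in the two remaining implications. For ``$e\sim e' \Rightarrow$ triples conjugate'': after replacing $(h',e',f')$ by a $G_0^c$-conjugate we may assume $e=e'$. Then $(h,e,f)$ and $(h',e,f')$ are two homogeneous $\sl_2$-triples with the same nilpositive element $e$. By the standard uniqueness part of Jacobson--Morozov (applied inside $\g^c$, or better: inside the centralizer $\mathfrak z_{\g^c}(e)$, which is a graded subalgebra), any two $\sl_2$-triples through $e$ are conjugate by an element of the unipotent group $\exp(\ad\,\mathfrak z_{\g^c}(e)\cap[\text{something}])$; the refinement here is that, because both triples are homogeneous, this conjugating element can be taken in the subgroup generated by $\exp(\ad x)$ with $x\in\g_0^c$ nilpotent, hence in $G_0^c$. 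Concretely one uses that the relevant centralizer of $e$ in $\g_0^c$ acts transitively on the set of valid $h$'s completing $e$ to a homogeneous triple — this is the graded version of the classical lemma and is where I would cite or reprove the $\theta$-group analogue.

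For ``$h\sim h' \Rightarrow$ triples conjugate'': after conjugating we may assume $h=h'$. Now $e,e'$ both lie in the eigenspace $\g_1^c(h;2)=\{x\in\g_1^c : [h,x]=2x\}$, and both are nilpotent with $(h,e,\cdot)$, $(h,e',\cdot)$ completing to $\sl_2$-triples. The key fact from Vinberg theory (the ``$Z(h)_0$ acts with finitely many orbits, and the generic/distinguished nilpotent in $\g_1^c(h;2)$ is unique'' statement, cf.\ the theory underlying the carrier-algebra method recalled in Section~\ref{carrier1}) is that the connected centralizer $Z_{G_0^c}(h)^\circ$ — equivalently the group with Lie algebra $\g_0^c(h;0)=\mathfrak z_{\g_0^c}(h)$ — acts transitively on the set of nilpotent elements $x\in\g_1^c(h;2)$ for which $(h,x)$ extends to an $\sl_2$-triple. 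This is essentially a consequence of the Kac/Vinberg result that $\sl_2$-triples correspond to $G_0^c$-orbits and the fact that $h$ determines the triple up to this centralizer action; the cleanest route is again to cite \cite{kora} or to deduce it from \cite{vinberg2}. Hence $e$ and $e'$ are conjugate under $Z_{G_0^c}(h)^\circ\leq G_0^c$, and one transports $f$ to $f'$ by the previous paragraph.

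I expect the main obstacle to be the second equivalence, ``$h\sim h'$ implies the triples are conjugate.'' The forward direction (triples conjugate $\Rightarrow$ $h$'s conjugate) and the equivalence with conjugacy of the $e$'s are formal, but showing that the \emph{neutral element alone} determines the orbit requires the nontrivial input that the nilpotent element completing a fixed $h$ to a homogeneous triple is unique up to $Z_{G_0^c}(h)$-conjugacy — this is a genuine feature of the $\Z/2\Z$-graded (symmetric-pair) setting and rests on Vinberg--Kostant--Sekiguchi-type structure theory rather than on elementary $\sl_2$-representation theory. Since the lemma is explicitly attributed to \cite[Proposition 4 and Lemma 4]{kora}, in the write-up I would keep the proof short: state the Jacobson--Morozov input, verify the two easy implications, and reduce the two hard implications to the cited transitivity statements, adding a sentence of justification for why the conjugating elements can be taken in $G_0^c$ (homogeneity of the triples forces the relevant centralizer elements to be homogeneous, hence to lie in the $\theta$-group).
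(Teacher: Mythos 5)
Your proposal is correct and matches the paper's treatment: the paper gives no proof of Lemma~\ref{lem:sl2C} beyond citing \cite[Proposition~4 and Lemma~4]{kora}, and your reconstruction --- Jacobson--Morozov plus homogenization for existence, the two formal implications, and transitivity of the centralizers of $e$ and of $h$ in $G_0^c$ for the two nontrivial ones --- is precisely the content of those cited results. In particular you correctly isolate the only genuinely graded input, namely that $Z_{G_0^c}(h)$ acts transitively on the nilpotent elements of $\g_1^c$ in the $2$-eigenspace of $\ad(h)$ that complete $h$ to a homogeneous triple.
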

If $h\in \g_0^c$ lies in an $\sl_2$-triple $(h,e,f)$, then $h$ is called
the {\em characteristic} of triple; this terminology goes back to Dynkin \cite{dyn}. We say that
$h\in \g_0^c$ is a {\em (homogeneous) characteristic} if it is the characteristic of some (homogeneous)
$\sl_2$-triple $(h,e,f)$. Lemma \ref{lem:sl2C} shows that the classification of
the nilpotent $G_0^c$-orbits in $\g_1^c$ is equivalent to the classification
of the $G_0^c$-orbits of homogeneous characteristics in $\g_0^c$.

Let $\h_0^c$ be a fixed Cartan subalgebra of $\g_0^c$; recall that all Cartan subalgebras of $\g_0^c$ are $G_0^c$-conjugate. Since every characteristic in $\g_0^c$ lies in some Cartan subalgebra of $\g_0^c$, this implies that  every $G_0^c$-orbit of homogeneous characteristics has at least one element in $\h_0^c$.
Furthermore,  two elements of $\h_0^c$ are $G_0^c$-conjugate if and only if they
are conjugate under the Weyl group $W_0$ of the root system of $\g_0^c$ with respect to $\h_0^c$, see \cite[Theorem 2.2.4]{colmcgov} and the remark below that theorem.

The approach now is to start with the classification of the nilpotent $G^c$-orbits in $\g^c$; this classification is well-known, see for example \cite{colmcgov}. Note that $\h_0^c$ is contained in a Cartan subalgebra of $\g^c$ (see for example \cite[Lemmas 8 \& 16)]{dfg2}). In the following, for simplicity, assume that $\h_0^c=\h^c$ is a Cartan subalgebra of $\g^c$; note that this holds in our main example (Example \ref{exa:1}). The classification of the
nilpotent $G^c$-orbits in $\g^c$ yields a finite list of $\sl_2$-triples
$(\hat h_i,\hat e_i,\hat f_i)$, $i=1,\ldots,m$, such that each $\hat h_i\in \h_0^c$
and $\{\hat e_i\mid i=1,\ldots,m\}$ is a list of representatives of the nilpotent orbits.
Let $W$ be the Weyl group of
the root system of $\g^c$ with respect to $\h_0^c$. The union
$\mathcal{H}$ of the orbits $W\cdot \hat h_i$ for $1\leq i\leq m$ is exactly the set of elements of $\h_0^c$
lying in an $\sl_2$-triple. Let $W_0\leq W$ denote the Weyl group of the root system
of $\g_0^c$ with respect to $\h_0^c$. Acting with $W_0$, we now reduce the set $\mathcal{H}$ to a subset $\mathcal{H}'$ of $W_0$-orbit representatives.  For each
$h\in \mathcal{H}'$ we then decide whether it is a homogeneous characteristic; we omit the details here and refer to \cite{gra15} instead. For each such $h$ we obtain a homogeneous $\sl_2$-triple $(h,e,f)$; the set of the elements
$e$ obtained in this way is a complete and irredundant list of representatives of the
nilpotent $G_0^c$-orbits in $\g_1^c$. Again, we refer to \cite{gra15} for further
details, and for an account of what happens when $\h_0^c$ is not a Cartan
subalgebra of $\g^c$.

\subsection{The real case} \label{sec:realsl2}
Now we consider the real case. Also here, for a nonzero nilpotent $e\in \g_1$, there exist $h\in \g_0$ and $f\in \g_1$ such that $(h,e,f)$ forms an $\sl_2$-triple; this follows from \cite[Proposition 4]{kora}, see also \cite[Theorem 2.1]{vanle}. In analogy to the previous
subsection, we say that $h$ is a homogeneous characteristic.

Recall that every Cartan subalgebra of $\g_0$ lies in some Cartan subalgebra of $\g$, and that $\g$ is assumed to have a Cartan subalgebra which is split over the reals. By \cite[Proposition 6.59]{knapp}, every Cartan subalgebra of $\g$ is $G$-conjugate to a $\theta$-stable Cartan subalgebra. This shows that there exists a Cartan subalgebra $\h_0$ of $\g_0$ which is split and $\theta$-stable. It follows from \cite[Lemma 11]{dfg2} that
\begin{eqnarray}\label{eqh0}\h_0 \subseteq \g_0\cap \fp;
\end{eqnarray}
see also  \cite[Chapter 4, Proposition 4.1]{vinbergiv}.
 Let $\h\leq \g$ be a split and $\theta$-stable Cartan subalgebra of $\g$ containing $\h_0$; as usual, $\h^c$ is the complexification of $\h$.

Using the methods indicated in Section \ref{secsl2comp}, we can determine representatives $e_1,\ldots,e_t$ of the nonzero $G_0^c$-orbits of nilpotent elements in $\g_1^c$, with corresponding homogeneous triples $(h_i,e_i,f_i)$ in $\g^c$.
We may assume that each $h_i\in \h_0$ (this follows from the fact that $\ad(h_i)$ has rational eigenvalues, and so $h_i$ is a rational linear combination of
the semisimple elements of a Chevalley basis of $\g$ with respect to $\h$).
If $e\in\g_1$ is non-zero and nilpotent, then, as mentioned above, $e$ lies in some homogeneous $\sl_2$-triple $(h,e,f)$ of $\g$ with $h\in\h_0$; since this triple can also be considered as a complex $\sl_2$ triple in $\g^c$, the discussion in Section \ref{secsl2comp} shows that $h$ is $W$-conjugate to some $h_j\in \h_0$.

A homogeneous $\sl_2$-triple $(h,e,f)$ in $\g$ is a  {\em real Cayley triple}
if  $\theta(e)=-f$, hence  $\theta(f)=-e$ and $\theta(h)=-h$.

\begin{proposition}\label{prop:sl21}
Let $h\in \g_0$ be a homogeneous characteristic, lying in a homogeneous
$\sl_2$-triple $(h,e,f)$. Then there is $g\in G_0$ and $i\in\{1,\ldots,t\}$ such that
$g(h) = h_i$ and $(g(h),g(e),g(f))$ is a real Cayley triple.
\end{proposition}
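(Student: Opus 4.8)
The strategy is to first move the given homogeneous triple to a real Cayley triple (without leaving the $G_0$-orbit), and then move its characteristic into the set $\{h_1,\dots,h_t\}$ by a further $G_0$-element that preserves the Cayley property. For the first step, I would mimic the classical construction of Cayley triples for $\sl_2$-triples in a reductive Lie algebra (as in \cite[\S9.4]{colmcgov} or Kostant--Sekiguchi theory), adapted to the graded/symmetric situation. Given the homogeneous triple $(h,e,f)$, consider the element $\theta(e)+f\in\g_1$; since $(h,e,f)$ and $(-\theta(h),-\theta(f),-\theta(e))$ are both $\sl_2$-triples whose neutral elements are $\theta$-related, a Mal'cev/Kostant-type conjugacy argument inside the reductive group $G_0$ lets one conjugate $(h,e,f)$ to a triple $(h',e',f')$ with $\theta(h')=-h'$; this uses that $\h_0\subseteq\g_0\cap\fp$ from \eqref{eqh0}, so $h'$ may be taken in a split $\theta$-stable Cartan subalgebra on which $\theta=-\mathrm{id}$. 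Then, with $h'$ already $\theta$-anti-invariant, one shows that within the $\ad(h')$-eigenvalue-$2$ subspace of $\g_1$ one can find a conjugate $e''$ of $e'$ by an element of the centraliser of $h'$ in $G_0$ such that $\theta(e'')=-f''$; the relevant centraliser is itself reductive, and positivity of $B$ on the $\fp$-part (Definition \ref{def:red}(iii), established in Proposition \ref{prop:red}) guarantees the needed exponential/Cayley transform stays in $G_0$. This yields $g_1\in G_0$ with $(g_1h,g_1e,g_1f)$ a real Cayley triple.

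\textbf{Moving the characteristic to the list.} After the first step we have a real Cayley triple with characteristic $h'=g_1h\in\g_0\cap\fp$; applying a conjugation we may place $h'$ in the split $\theta$-stable Cartan $\h_0$. By the discussion just before the proposition, $h'$ is $W$-conjugate to some $h_j$, and since both lie in $\h_0$ and all elements of $\h_0$ involved are characteristics, standard Vinberg/Kostant theory (as recalled in Section \ref{secsl2comp}, via \cite{gra15}) shows they are in fact $W_0$-conjugate --- indeed $W_0$ acts transitively on homogeneous characteristics in a fixed Cartan that are $G_0^c$-conjugate, and real conjugacy of the triples forces this. The point is then to realise a representative of this $W_0$-element by an element of $G_0$ (not merely $G_0^c$) that additionally preserves the real structure and the Cayley property: since $\h_0$ is split, each Weyl reflection is realised inside the corresponding $\SL_2(\R)$-subgroup, which sits in $G_0$; composing such reflections gives $g_2\in G_0$ with $g_2h'=h_i$, and because these $\SL_2(\R)$-elements are built from real root vectors one checks $\theta(g_2e')=-g_2f'$ is preserved (the reflection elements $w_\alpha(1)=\exp(e_\alpha)\exp(-f_\alpha)\exp(e_\alpha)$ normalise $\theta$ up to the Cayley sign). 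Setting $g=g_2g_1\in G_0$ gives the claim.

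\textbf{Main obstacle.} The delicate point is the first step: producing the Cayley triple \emph{inside $G_0$} rather than inside $G_0^c$ or the adjoint group of $\g$. The subtlety is that $G_0$ is only one of a family of groups squeezed between $(G_0^c(\R))^\circ$ and $G_0^c(\R)$, and conjugacy statements for $\sl_2$-triples in $\g$ a priori use the full group $G=G^c(\R)$, not the subgroup $G_0$ acting on $\g_1$. Here one must exploit that $(h,e,f)$ is homogeneous, so the relevant centralisers and the elements effecting the conjugation can all be chosen in $\g_0^c$, hence their exponentials in $G_0^c$; combined with Proposition \ref{prop:red} --- that $G_0$ is reductive with Cartan decomposition $G_0=K_0\exp(\ad_\g(\fp_0))$ and $B$ positive definite on the $\fp_0$-part --- one gets that the required Cayley/exponential corrections lie in $\exp(\ad_\g(\fp_0))\subseteq G_0$. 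I would isolate this as the core lemma: \emph{a homogeneous $\sl_2$-triple in $\g$ is $G_0$-conjugate to a real Cayley triple}, proved by the $B$-gradient-flow or fixed-point argument of Kostant--Sekiguchi restricted to the $\theta$- and $\varphi$-compatible pieces, and then the characteristic-normalisation is comparatively routine using the split structure of $\h_0$.
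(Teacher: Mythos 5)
Your two-step strategy (first conjugate the homogeneous triple to a real Cayley triple inside $G_0$, then move the characteristic onto the fixed list $h_1,\dots,h_t$ by a further $G_0$-element preserving the Cayley property) is exactly the structure of the paper's proof, but the execution differs at both stages. For the first step the paper does not re-derive the Cayley-triple statement: it simply invokes Sekiguchi's result \cite[Lemma 1.4]{seki}, which applies because $G_0$ contains the analytic subgroup of $G$ with Lie algebra $\ad_\g(\g_0)$ and is reductive by Proposition \ref{prop:red}; your ``core lemma'' is precisely that citation, and your sketch of a Kostant--Sekiguchi-type argument, while plausible, is the hard part you leave unproved. For the second step the paper works entirely inside the compact subgroup $K_0$: since $\ad(h')$ has integral eigenvalues, $h'\in\g_0\cap\fp$ lies in a Cartan subspace $\fa$, which is carried onto $\h_0$ by some $k'\in K_0$ (\cite[Proposition 7.29]{knapp}), and then the needed Weyl-group element of $W_0$ is realised by $k''\in K_0$ because $N_{K_0}(\h_0)/Z_{K_0}(\h_0)$ is the full Weyl group of $\g_0$ relative to the split $\h_0$ (\cite[Proposition 7.32]{knapp}); the Cayley property survives because every element of $K_0$ commutes with $\theta$ (\cite[Proposition 7.19(c)]{knapp}). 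Your realisation of the reflections by $w_\alpha(1)=\exp(e_\alpha)\exp(-f_\alpha)\exp(e_\alpha)$ amounts to the same thing, since with $\theta(e_\alpha)=-f_\alpha$ these elements are $\exp(\tfrac{\pi}{2}\ad(e_\alpha-f_\alpha))$ with $e_\alpha-f_\alpha\in\fk_0$, hence lie in $K_0$; stating this directly would be cleaner than the vague ``normalise $\theta$ up to the Cayley sign''.

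The one point you should repair: after producing the Cayley triple, you say ``applying a conjugation we may place $h'$ in the split $\theta$-stable Cartan $\h_0$'' without ensuring that this conjugation commutes with $\theta$. An arbitrary $G_0$-conjugation would destroy the Cayley property you just established. The paper's fix is exactly the $k'\in K_0$ carrying the Cartan subspace $\fa\subseteq\g_0\cap\fp$ containing $h'$ onto $\h_0$; you need this (or an equivalent $\theta$-commuting conjugation) before you can start applying Weyl reflections in $N_{K_0}(\h_0)$. Also, your justification that $h''$ is $W_0$-conjugate (not merely $W$-conjugate) to some $h_i$ --- ``real conjugacy of the triples forces this'' --- is not the right reason; it holds simply because the list $h_1,\dots,h_t$ was constructed in Section \ref{secsl2comp} as a set of $W_0$-orbit representatives of the homogeneous characteristics in $\h_0^c$.
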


\begin{proof}
From Proposition \ref{prop:red} we have that $G_0$ is reductive, and we let $K_0$ be
the compact subgroup of $G_0$, as in the proof of that proposition. Recall that $G_0$ contains the analytic subgroup of $G$ with Lie algebra $\ad_\g (\g_0)$. Now \cite[Lemma 1.4]{seki} shows that there is a $g'\in G_0$ such that $(h',e',f')=(g'(h),g'(e),g'(f))$ is a real Cayley triple.
As $\ad (h')$ has integral eigenvalues, it follows from \cite[Lemma 11]{dfg2} that $h'\in \g_0\cap \fp$; in particular,  $h'$ lies in a  maximal $\R$-diagonalisable subspace ({\em Cartan subspace}) $\fa$ of
$\g_0\cap \fp$. We have seen in \eqref{eqh0} that $\h_0$ is also a Cartan
subspace of $\g_0\cap \fp$, and now  \cite[Proposition 7.29]{knapp} asserts that there is a $k'\in K_0$ with $k'(\fa) =\h_0$. We define $h'' = k'(h')$, so
that $h''\in \h_0$. By \cite[Proposition 7.32]{knapp}, the group
$N_{K_0}(\h_0)/Z_{K_0}(\h_0)$ coincides with the Weyl group of the root system
of $\g_0$ with respect to $\h_0$ (note that we assume that $\h_0$ is split,
therefore, in this case, ``restricted roots'' are the same as ``roots''). Together, in view of what is said in the
first subsection, there is a $k''\in K_0$ such that
$k''(h'')=h_i$ is one of the fixed characteristics $h_1,\ldots,h_t\in\h_0$
from the complex case. Now define $k=k''k'$; then $k(h')=h_i$, and, as
$k\in K_0$ commutes with $\theta$ (see \cite[Proposition 7.19(c)]{knapp}),
it follows that  $(k(h'),k(e'),k(f'))$ is a real Cayley triple.
\end{proof}

Based on this proposition we outline a procedure for obtaining a list of
nilpotent elements of $\g_1$, lying in real Cayley triples, such that
each nilpotent $G_0$-orbit in $\g_1$ has a representative in this list. At this stage, there may be $G_0$-conjugate elements in that list; we explain in a later section how to deal with that. First we give a rough outline of the main steps of our procedure; we comment on these steps subsequently.

\begin{enumerate}
\item Using the algorithms sketched in Section \ref{secsl2comp}, compute a list of
representatives of the nilpotent $G_0^c$-orbits in $\g_1^c$, by obtaining
$\sl_2$-triples $(h_i,e_i,f_i)$, $i=1,\ldots,s$, with each $h_i\in \h_0$.
\item For each $h\in\{h_1,\ldots,h_s\}$ do the following:
\begin{enumerate}
\item Compute a basis $\{u_1,\ldots, u_d\}$ of $\g_1(2)=
\{ x\in \g_1 \mid [h,x] = 2x \}$.
\item Compute polynomials $p_1,\ldots,p_m\in \R[T_1,\ldots,T_d]$ such
that $p_i(c_1,\ldots,c_d)=0$ for all $i$ is equivalent to
$(h,e,-\theta(e))$ with $e=\sum_i c_i u_i$ being a real Cayley triple.
\item Describe the variety $C\subseteq \g_1(2)$ defined by the  polynomial equations $p_1=\ldots=p_m=0$.
\item Compute  $\fz = \{ x\in
\fk\cap\g_0\mid [h,x]=0\}$ and act with $\exp(\ad_\g(k))\in G_0$, where $k\in \fz$, to get rid of  as many $G_0$-conjugate copies in $C$ as possible. The result is the list that remains.
\end{enumerate}
\end{enumerate}
The only steps that are not straightforward are the last two; we use the realisation of $\g$ as a Lie algebra of $n\times n$ matrices to find a description of the variety $C$ (see Example \ref{exa:2b} for an illustration).
Now in $C$ we need to find  non-conjugate elements. A first step towards
that is performed using explicit elements of $G_0$, namely by using elements from the group corresponding to the centraliser $\fz$. Note that
elements from this group leave $h$ invariant and map a real Cayley triple
to a real Cayley triple; in particular, they stabilise $C$. Finally, after having fixed the action of the centralizer of $h$ on $C$, we define a set of $G_0$-invariant quantities, to be discussed in detail in Section \ref{SNO}, such that if nilpotent generators in $C$ are not distinguished by values of these invariants, they are shown to be connected by $G_0$. The resulting classification is then complete.

\begin{example}\label{exa:2b}
We continue with the notation of Examples \ref{exa:1} and \ref{exa:2}, and realise  $\g$ as a Lie algebra of $8\times 8$ matrices. Since $\theta(X)=-X^\intercal$, the polynomial equations are equivalent to $[e,e^\intercal]=h$, where $e=\sum_i T_i u_i$; these equations are easily
written down. We then use the function {\tt Solve} of {\sc Mathematica} \cite{mathematica}; in the context of this example, this function always returned a
list of matrices, some of which depend on parameters; this serves as description of the variety $C$.  A basis of the centraliser $\fz$ is readily found by solving a set of linear equations. If $x\in \fz$, then $\exp(x)$ is an element of $K_0\leq G_0$. Since $x$ again is a matrix, the function {\tt MatrixExp} of {\sc Mathematica} can be used to compute $g=\exp(x)$. This element
acts on $C$ by $e \mapsto geg^{-1}$. By varying $x$ we managed to reduce the final list to a finite number of elements.

As an example, we consider the representative $h\in \mathfrak{h}_0$ of the $G^c_0$-orbit
defined by the $\gamma$-label $\gamma^{(3;1)}=(2,0,2,0)$, which belongs to the $G^c_0$ orbit with $\alpha$-label $\alpha^{(3)}=(0,0,2,0)$.
A generic element of the centraliser $Z(h)=\{\exp(x)\vert\,x\in \mathfrak{z}\}$ of $h$ has the form
\begin{equation}g=\left(\begin{smallmatrix}1 & 0\cr 0 & 1\end{smallmatrix}\right)\otimes \left(\begin{smallmatrix}\cos(\alpha_2) & \sin(\alpha_2)\cr -\sin(\alpha_2) & \cos(\alpha_2)\end{smallmatrix}\right)\otimes \left(\begin{smallmatrix}1 & 0\cr 0 & 1\end{smallmatrix}\right)\otimes \left(\begin{smallmatrix}\cos(\alpha_4) & \sin(\alpha_4)\cr -\sin(\alpha_4) & \cos(\alpha_4)\end{smallmatrix}\right)\,.
\end{equation}
Using the {\tt Solve} function we find, modulo an overall sign, the following parameter-dependent solutions to the equation $[e,e^T]=h$:
\begin{eqnarray}
e_1&=&\frac{1}{2}\sqrt{1-a^2}\,[(+,-,-,-)+(+,+,-,+)] +\frac{a}{2}[(+,+,-,-)-(+,-,-,+) ]\,,\nonumber\\[1ex]
e_2&=&\frac{1}{2}\sqrt{1-b^2}\,[(+,-,-,-)-(+,+,-,+)] +\frac{b}{2}[(+,+,-,-)+(+,-,-,+) ]\,,\nonumber
\end{eqnarray}
every concrete solution is defined by special choices of the above parameters. By acting with $g\in Z(h)$ we can transform $e_1$ and $e_2$ into the following elements:
 \begin{eqnarray}
e_1'&=&ge_1g^{-1}=\frac{1}{2}[(+,+,-,-)-(+,-,-,+) ]\,,\nonumber\\
e_2'&=&ge_1g^{-1}=\frac{1}{2}[(+,+,-,-)+(+,-,-,+) ]\,.\nonumber
\end{eqnarray}
The above representatives correspond to two distinct $\beta$-labels, and therefore  belong to distinct $G_0$-orbits.
\exdone
\end{example}

\section{Nilpotent orbits from carrier algebras}\label{carrier1}

\subsection{The complex case}\label{seccacomp}

Here we comment briefly on the problem of finding the $G_0^c$-orbits in
$\mathcal{N}_1^c$ using Vinberg's carrier algebra method, introduced in \cite{vinberg2}. For more details and proofs, we refer to
that paper.

This method roughly consists of two parts. In the first part, to each nonzero nilpotent $e\in \g_1^c$ a class of
subalgebras of $\g^c$ is associated; these subalgebras are called
{\rm carrier algebras} of $e$. Two different carrier algebras of $e$ are conjugate
under $G_0^c$, so up to conjugacy by that group, $e$ has a unique carrier algebra; so we speak of {\em the} carrier algebra of $e$. Moreover, it is shown
that two nonzero nilpotent elements of $\g_1^c$ are $G_0^c$-conjugate if and only if
their carrier algebras are $G_0^c$-conjugate. The second part is to show that
classifying carrier algebras boils down to classifying certain subsystems of the
root system of $\g^c$, up to the action of the Weyl group of $\g_0^c$. We give more details:

Let $e\in \g_1^c$ be nilpotent and nonzero. Each homogeneous $\sl_2$-triple  $(h,e,f)$ in $\g^c$ spans a subalgebra $\fa^c\leq \g^c$ isomorphic to $\sl_2(\C)$, and we denote by $\fz_0(\fa^c)  =\{x \in \g_0^c \mid [x,\fa]=0\}$ its centraliser in $\g^c$. We choose a Cartan subalgebra $\h_z$ of $\fz_0(\fa^c)$, and let \[\ft^c=\text{Span}_\C(h)\oplus \h_z\] be the
subalgebra spanned by $\h_z$ and $h$. Define the linear map $\lambda \colon \ft^c\to \C$ by $[t,e] = \lambda(t)e$ for $t\in \ft$, and for $k\in \Z$ set
$$\g_k^c(\ft^c,e) = \{ x\in \g^c_{k\bmod 2} \mid [t,x] = k\lambda(t) x
\text{ for all } t\in \ft^c\}.$$
Next define
$$\g^c(\ft^c,e) = \bigoplus\nolimits_{k\in \Z} \g_k^c(\ft^c,e).$$
The carrier algebra of $e$ is $\fs^c(e) = [\g^c(\ft^c,e),\g^c(\ft^c,e)]$ with the induced
$\Z$-grading $\fs^c(e)=\bigoplus_{k\in\Z} \fs^c(e)_k$, where $\fs^c(e)_k =
\fs^c(e)  \cap \g_k^c(\ft^c,e)$. Note that  $\fs^c(e)$ depends on $\h_z$; however, any two such Cartan subalgebras are conjugate under
$G_0^c$ (in fact, under the smaller group $Z_{G_0^c}(\fa^c)$), so
carrier algebras resulting from different choices of $\h_z$ are
$G_0^c$-conjugate. Carrier algebras posses some nice properties and $\fs^c(e)$ \ldots
\begin{items}
\item[(1)] is a semisimple and $\Z$-graded subalgebra, that is, each $\fs^c(e)_k\leq \g^c_{k\bmod 2}$,
\item[(2)]  is regular, that is, normalised by a Cartan subalgebra of $\g_0^c$,
\item[(3)]  is complete, that is, not a proper subalgebra
of a reductive $\Z$-graded subalgebra of the same rank,
\item[(4)]  is locally flat, that is, $\dim \fs^c(e)_0 = \dim \fs^c(e)_1$,
\item[(5)] has $e\in \fs^c(e)_1$ in general position, that is, $[\fs^c(e)_0,e]=
\fs^c(e)_1$.
\end{items}
In general, any subalgebra of $\g^c$ satisfying (1)--(4) is  called a carrier algebra of $\g^c$. The carrier algebra of a nilpotent element in $\g_1^c$ is a carrier algebra and, conversely, a carrier algebra $\fs^c\leq \g^c$ is a carrier algebra of $e$ where $e\in\fs^c_1$ is any element in general position. As a consequence, there is a 1--1 correspondence between the nilpotent $G_0^c$-orbits
in $\g_1^c$ and the $G_0^c$-conjugacy classes of carrier algebras.

Let $\h_0^c$ be a fixed Cartan subalgebra of $\g_0^c$. If $\fb^c\leq \g^c$ is
a subalgebra normalised by $\h_0^c$, then $\fb^c$ is called
$\h_0^c$-regular. Since we are interested in listing the carrier algebras
up to $G_0^c$-conjugacy, and all Cartan subalgebras of $\g_0^c$ are
$G_0^c$-conjugate, we can restrict attention to the $\h_0^c$-regular carrier
algebras. In the following we assume, for simplicity, that $\h_0^c$ is
also a Cartan subalgebra of $\g^c$ (as in Section \ref{secsl2comp}); note that this holds in our main example (Example \ref{exa:1}). The general case does not pose extra
difficulties, but is just a bit more cumbersome to describe.

Let $W_0^c = N_{G_0^c}(\h_0^c)/Z_{G_0^c}(\h_0^c)$
be the Weyl group of $\g_0^c$ with
respect to $\h_0^c$; this group is isomorphic to the Weyl group of the
root system $\Phi_0$ of $\g_0^c$ with respect to $\h_0^c$. Since $\h_0^c$ is a
Cartan subalgebra $\h^c$ of $\g^c$, it follows that $\Phi_0$ is a subsystem of the root system $\Phi$ of $\g^c$ with respect to $\h^c$. Moreover, $W_0^c$
is in a natural way a subgroup of the Weyl group of $\Phi$; in particular, it
acts on $\Phi$. For an $\h_0^c$-regular subalgebra $\fb^c\leq \g^c$,
we denote by $\Psi(\fb^c)\subseteq \Phi$ the subset of roots $\alpha\in \Phi$ with $\g_\alpha^c\subseteq \fb^c$; here, as usual, $\g_\alpha^c$ denotes the root space of $\g$ corresponding to the root $\alpha$. We have the following theorem, see \cite[Proposition 4(2)]{vinberg2}.

\begin{theorem}\label{thm:conjcar}
Two $\Z$-graded  $\h_0^c$-regular subalgebras $\fa^c$ and $\fb^c$ are
$G_0^c$-conjugate if and only if $\Psi(\fa^c)$ and $\Psi(\fb^c)$ are
$W_0^c$-conjugate.
\end{theorem}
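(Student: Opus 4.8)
The statement is Vinberg's criterion for when two $\Z$-graded, $\h_0^c$-regular subalgebras are $G_0^c$-conjugate, and the natural strategy is to reduce $G_0^c$-conjugacy to $W_0^c$-conjugacy via the $N_{G_0^c}(\h_0^c)$-normalizer. The plan is to prove the two implications separately. For the easy direction, suppose $\Psi(\fa^c)$ and $\Psi(\fb^c)$ are $W_0^c$-conjugate; pick $w\in W_0^c$ with $w\cdot\Psi(\fa^c)=\Psi(\fb^c)$, lift $w$ to an element $n\in N_{G_0^c}(\h_0^c)$, and observe that $n$ sends the root space $\g_\alpha^c$ to $\g_{w\alpha}^c$ and fixes $\h_0^c$ setwise. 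Since $\fa^c$ is $\h_0^c$-regular, $\fa^c$ is spanned by $\fa^c\cap\h_0^c$ together with the root spaces $\{\g_\alpha^c : \alpha\in\Psi(\fa^c)\}$; applying $n$ then carries $\fa^c$ into a subalgebra $n(\fa^c)$ with the same root-space content as $\fb^c$, and (after possibly composing with an element of $Z_{G_0^c}(\h_0^c)=$ the maximal torus, to match the Cartan parts and the $\Z$-gradings, which are determined by the characteristics) one gets $n(\fa^c)=\fb^c$. The $\Z$-grading is preserved because conjugation by torus elements and by lifts of Weyl elements acts compatibly with the grading once we know $\Psi(\fa^c)$ and $\Psi(\fb^c)$ determine which graded piece each root space sits in.

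\textbf{The converse direction.} Suppose $g\in G_0^c$ with $g(\fa^c)=\fb^c$. The key observation is that both $\fa^c$ and $\fb^c$, being $\h_0^c$-regular, contain $\h_0^c$ (more precisely, $\h_0^c$ normalizes them, and since $\h_0^c$ is a Cartan subalgebra of $\g^c$, regularity of $\fa^c$ forces $\h_0^c$ to be contained in the normalizer, and in fact one may assume $\h_0^c\cap\fa^c$ is a Cartan subalgebra of $\fa^c$). Then $g(\h_0^c\cap\fa^c)$ and $\h_0^c\cap\fb^c$ are two Cartan subalgebras of $\fb^c$, hence conjugate by an element of the (connected) group generated by $\fb^c$; but we want conjugacy inside $G_0^c$. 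The cleaner route: $\h_0^c$ and $g^{-1}(\h_0^c)$ are two Cartan subalgebras of $\g_0^c$ that both normalize $\fa^c$; since all Cartan subalgebras of $\g_0^c$ normalizing a fixed $\h_0^c$-regular subalgebra are conjugate under the subgroup $Z_{G_0^c}(\fa^c)^\circ\cdot$(something), we can modify $g$ by an element $z$ fixing $\fa^c$ so that $gz$ additionally normalizes $\h_0^c$, i.e.\ $gz\in N_{G_0^c}(\h_0^c)$. Then $gz$ induces an element $w\in W_0^c$, and $w\cdot\Psi(\fa^c)=\Psi((gz)(\fa^c))=\Psi(\fb^c)$ since $gz(\fa^c)=\fb^c$.

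\textbf{Main obstacle.} The delicate point is the normalization step in the converse: given $g(\fa^c)=\fb^c$, arranging that $g$ can be adjusted to normalize $\h_0^c$. This requires knowing that the Cartan subalgebras of $\g_0^c$ that normalize $\fa^c$ form a single orbit under the stabilizer $\mathrm{Stab}_{G_0^c}(\fa^c)$ — equivalently, a conjugacy statement for Cartan subalgebras of the reductive group $Z_{G_0^c}(\fa^c)$ combined with the fact that a Cartan subalgebra of $\g_0^c$ normalizing $\fa^c$ decomposes compatibly. One must also be careful that the grading-preservation is automatic: since $\fa^c$ and $\fb^c$ are $\Z$-graded and $g$ is an automorphism of $\g^c$ preserving the $\Z/2\Z$-grading $\g_0^c\oplus\g_1^c$, the induced map on root systems respects the partition of roots by their $\Z$-degree (which is recorded in $\Psi$ together with the grading data), so no extra work is needed there. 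I would cite \cite[Proposition 4(2)]{vinberg2} for the substance and present the argument above as the expanded justification, flagging the Cartan-conjugacy lemma as the technical heart.
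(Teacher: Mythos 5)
The paper offers no proof of this theorem at all: it is imported verbatim from Vinberg, with the citation to \cite[Proposition 4(2)]{vinberg2} standing in for the argument. Your sketch reconstructs the standard proof behind that citation, and both implications are organised correctly (lifting $w\in W_0^c$ to $N_{G_0^c}(\h_0^c)$ for the easy direction; adjusting a conjugating element $g$ by the stabiliser of $\fa^c$ so that it normalises $\h_0^c$ for the converse). Two points in your write-up need repair, though. First, in the easy direction your parenthetical fix --- ``compose with an element of $Z_{G_0^c}(\h_0^c)$ to match the Cartan parts'' --- cannot work as stated: the maximal torus centralises $\h_0^c$, so it cannot move $n(\fa^c)\cap\h_0^c$ at all. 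The real issue is that the theorem is false for arbitrary $\Z$-graded $\h_0^c$-regular subalgebras (two distinct subspaces of $\h_0^c$ both have $\Psi=\emptyset$); one must assume, as the real analogue Proposition \ref{prop:conjstrong} does explicitly and as holds for carrier algebras, that $\fa^c$ and $\fb^c$ are semisimple. Under that hypothesis a regular subalgebra is spanned by its root spaces together with the coroots $[\g^c_\alpha,\g^c_{-\alpha}]$ for $\alpha\in\Psi$, so it is determined by $\Psi$ and the torus step becomes unnecessary. Second, in the converse the conjugating subgroup is not $Z_{G_0^c}(\fa^c)^\circ$ ``times something'': the precise lemma is that $\h_0^c$ and $g^{-1}(\h_0^c)$ are both Cartan subalgebras of the algebraic Lie algebra $\fn_{\g_0^c}(\fa^c)=\{x\in\g_0^c\mid[x,\fa^c]\subseteq\fa^c\}$, hence conjugate under the connected subgroup of $G_0^c$ with Lie algebra $\ad(\fn_{\g_0^c}(\fa^c))$, every element of which stabilises $\fa^c$; that is exactly the Cartan-conjugacy statement you flag as the technical heart. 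With those two corrections your argument is the one underlying Vinberg's proposition.
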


This reduces the classification of the nilpotent $G_0^c$-orbits in $\g_1^c$
to the classification of root subsystems of $\Phi$, with certain properties,
up to  $W_0^c$-conjugacy; the latter is a finite combinatorial problem.
This approach has been used in a number of publications
(for example, \cite{elashvin})
to classify the nilpotent orbits of a particular $\theta$-representation.
In \cite{gra15,litt8}, this method is  the basis of an
implemented algorithm.

\begin{example}\label{exa:3}
We continue with Examples \ref{exa:1}, \ref{exa:2}, and \ref{exa:2b}. Let $\fs^c
\leq \g^c$ be a carrier algebra which is normalised
by the Cartan subalgebra with basis $\{h_1,\ldots,h_4\}$. Since this is a
Cartan subalgebra of $\g^c$ as well, the root system of $\fs^c$ is a subsystem
of the root system of $\g^c$. It follows that $\fs^c$ can be of type $A_1$, $A_2$, or $A_3$ (or direct sums of those), or $D_4$. It can be shown that a carrier algebra of type $A_i$ is always {\em principal} (see \cite[p.\ 29]{vinberg2}), that is $\fs^c_0$ is
the Cartan subalgebra of $\fs^c$, and so $\fs^c_1$ is spanned by the simple root vectors of $\fs^c$.  Here we consider the problem of listing the carrier algebras of type $A_3$, up to
$G_0^c$-conjugacy. In this case, where the root system of $\g^c$ is not too big,
we can use a brute force approach. Let $\Delta = \{ \alpha_1,\ldots,
\alpha_4\}$ be the simple roots (where we use the ordering according to the
enumeration of the Dynkin diagram in Example \ref{exa:2}). Then $\g_1^c$ is
spanned by the root spaces corresponding to the following roots:
\begin{eqnarray*}
&& \pm \{\alpha_2,\; \alpha_1+\alpha_2,\; \alpha_2+\alpha_3,\; \alpha_2+\alpha_4,\;
 \alpha_1+\alpha_2+\alpha_3,\;\alpha_1+\alpha_2+\alpha_4,\;\\
&&\hspace*{3ex}\alpha_2+\alpha_3+
\alpha_4,\;\alpha_1+\alpha_2+\alpha_3+\alpha_4\}.
\end{eqnarray*}
By a brute force computation, we find 96 three-element subsets each having a Dynkin diagram of type $A_3$. The Weyl group of $\g_0^c$ is generated by the reflections $s_{\alpha_i}$, $i=0,1,3,4$, and another computation shows that it has 6 orbits on these three-sets. It is not difficult
to show that all 6 orbits yield a carrier algebra (for example, see \cite{gra15} for a criterion for this). One such three-set consists of the elements
$\beta_1=\alpha_2$, $\beta_2=\alpha_1+\alpha_2+\alpha_3+\alpha_4$, and
$\beta_3=-\alpha_1-\alpha_1-\alpha_1$. If $x_\beta$ denotes a root vector
corresponding to a root $\beta$, then $x_{\beta_1}+x_{\beta_2}+x_{\beta_3}$
is a representative of the nilpotent orbit corresponding to the carrier
algebra that we found. (In general, for a principal carrier algebra $\fs^c$,
the sum of the basis elements of $\fs^c_1$ is always such a representative.)

From this example it becomes clear that for the higher dimensional cases we
need more efficient methods than just brute force enumeration (cf.\ \cite{gra15}). \exdone
\end{example}

\subsection{The real case}\label{secrealCA}
One can adapt Vinberg's method of carrier algebras to the real case to obtain the nilpotent $G_0$-orbits in $\g_1$; this
has been worked out in \cite{dfg2}. We sketch the two main steps here: constructing the carrier algebras in $\g$ up to $G_0$-conjugacy and, from that, getting the nilpotent $G_0$-orbits in $\g_1$. As in the previous subsection we assume
that a Cartan subalgebra of $\g_0$ is also a Cartan subalgebra of $\g$.
Again, the general case does not pose extra difficulties.

\begin{remark} In some places in \cite{dfg2} rather restrictive assumptions on the group
$G_0$ are posed, namely that $G_0$ is both connected (in the real Euclidean
topology) and of the form $G_0=G_0^c(\R)$.  This assumption affects the algorithm for computing
the real Weyl group in \cite[Section 3]{dfg2} and some of the procedures in \cite[Section 10.2.2]{dfg2} used to obtain elements in $G_0$ belonging to a given split torus.
 However, the main theorems in \cite{dfg2} underpinning the procedure used to classify the real
carrier algebras (as summarised below) are not affected, and, moreover, it is straightforward to verify that these theorems can also be applied to the more general definition of the group $G_0$ given in Section \ref{secsetup}; for this reason we do not repeat the full proofs of these results here.
\end{remark}

\subsubsection{Listing the carrier algebras}
Let $e\in \g_1$ be nonzero nilpotent. Its carrier algebra, $\fs(e)$,
is defined  as in the complex case, except that we choose $\h_z$ to be a
{\em maximally noncompact} Cartan subalgebra of $\fz_{0}(\fa)$.
Since maximally noncompact Cartan subalgebras are unique up to conjugacy
under the adjoint group, if  $e,e'\in \g_1$ are nilpotent and $G_0$-conjugate,
then also their carrier algebras $\fs(e)$ and $\fs(e')$ are $G_0$-conjugate by \cite[Proposition 34]{dfg2}. Let $\h_0$ be a Cartan
subalgebra of $\g_0$. An $\h_0$-regular $\Z$-graded semisimple
subalgebra $\fs\leq \g$ is {\em strongly $\h_0$-regular} if $\h_0$ is a
maximally noncompact
Cartan subalgebra of the reductive Lie algebra $\fn_{0}(\fs)=\{x\in\g_0\mid [x,\fs]\subseteq \fs\}$. Let $\Phi(\h_0)$ denote the root
system of $\g^c$  with respect to $\h_0^c$.
For an $\h_0$-regular subalgebra $\fs\leq \g$ denote by $\Psi(\fs)$ the
set of roots $\alpha\in \Phi(\h_0)$ with $\g_\alpha^c\subseteq \fs^c$. Now we have
the following analogue of Theorem \ref{thm:conjcar}, see
\cite[Proposition 24]{dfg2} for its proof.

\begin{proposition}\label{prop:conjstrong}
Two $\Z$-graded semisimple strongly $\h_0$-regular subalgebras
$\fs$ and $\fs'$ are $G_0$-conjugate if and only if $\Psi(\fs)$ and $\Psi(\fs')$
are conjugate under the real Weyl group $W(\h_0)= N_{G_0}(\h_0)/Z_{G_0}(\h_0)$.
\end{proposition}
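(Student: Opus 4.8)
The plan is to mimic the proof of the complex analogue, Theorem \ref{thm:conjcar} (that is, \cite[Proposition 4(2)]{vinberg2}), but to carry it out with $G_0^c$ replaced by $G_0$ and with the subtlety that conjugacy of Cartan \emph{subspaces} in a reductive real Lie algebra is only controlled up to the action of the relevant maximal compact, so that $W(\h_0)$ must be used in place of the full complex Weyl group $W_0^c$. One direction is essentially formal: if $\fs' = g(\fs)$ for some $g\in G_0$, then $g$ carries $\h_0$ to another maximally noncompact Cartan subalgebra of $\fn_0(\fs')$ (this uses that strong $\h_0$-regularity is a conjugacy-invariant property, which in turn rests on Proposition \ref{prop:red} and the structure of reductive groups), and after composing with a further element of $G_0$ normalising $\h_0$ — coming from the conjugacy of maximally noncompact Cartan subalgebras inside the reductive group attached to $\fn_0(\fs')$, as in \cite[Proposition 7.29]{knapp} applied to $K_0$ — one obtains an element of $N_{G_0}(\h_0)$ conjugating $\Psi(\fs)$ to $\Psi(\fs')$. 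Since root spaces are determined by the root datum relative to $\h_0^c$, the induced map on $\Phi(\h_0)$ is exactly the image of this element in $W(\h_0)$.

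For the converse — the substantive direction — suppose $\Psi(\fs)$ and $\Psi(\fs')$ are conjugate under $W(\h_0) = N_{G_0}(\h_0)/Z_{G_0}(\h_0)$. First lift the conjugating Weyl element to some $n\in N_{G_0}(\h_0) \leq G_0$ and replace $\fs'$ by $n^{-1}(\fs')$, so that we may assume $\Psi(\fs) = \Psi(\fs')$ as subsets of $\Phi(\h_0)$. The point is now that an $\h_0$-regular $\Z$-graded subalgebra is \emph{not} determined by its set of roots alone: $\fs$ and $\fs'$ have the same complexification $\fs^c = (\fs')^c$ (since a regular subalgebra over $\C$ is the span of the root spaces it contains together with a subspace of $\h_0^c$, and semisimplicity pins down that Cartan part), but they are two real forms of this fixed complex subalgebra $\fs^c$, both compatible with the same real structure coming from $\g$. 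The $\Z$-grading is likewise common. So the remaining task is to show that two such real forms are conjugate under $G_0$; concretely, one shows that the subgroup of $G_0$ preserving $\h_0$, $\fs^c$, and the grading acts transitively on the real forms of $(\fs^c, \Z\text{-grading})$ that occur. This is where one invokes the real-form machinery from \cite{dfg2}: the relevant statement is that, after fixing the root data, the residual freedom is measured by a torus and by the component group, and the hypothesis that $\h_0$ is \emph{split} (recall $\h_0 \subseteq \g_0\cap\fp$ by \eqref{eqh0}) together with the strong $\h_0$-regularity forces the split torus in the normaliser $\fn_0(\fs)$ to be as large as possible, which is precisely the condition that eliminates the ambiguity. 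In other words, strong $\h_0$-regularity is the real analogue of having chosen $\h_0^c$ as the Cartan, and it is exactly strong enough to make $W(\h_0)$-conjugacy of root sets imply $G_0$-conjugacy of subalgebras.

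I would organise the write-up so that it quotes \cite[Proposition 24]{dfg2} essentially verbatim, checking only that the more permissive definition of $G_0$ from Section \ref{secsetup} — namely $(G_0^c(\R))^\circ \leq G_0 \leq G_0^c(\R)$ — does not break the argument. The key observations here are: (a) all three groups $(G_0^c(\R))^\circ$, $G_0$, $G_0^c(\R)$ have the same Lie algebra $\ad_\g(\g_0)$, so the infinitesimal steps (existence of homogeneous $\sl_2$-triples, the local flatness and general-position properties, the passage between $\fs$ and a carrier algebra) are unaffected; (b) the compact part $K_0$ used for the Cartan-subspace conjugacy lies in $G_0$ by Proposition \ref{prop:red}; and (c) the elements of a split torus needed to match real forms can be produced inside $G_0$ — this is the one place one must be slightly careful, since Example \ref{exa:2} shows $G_0 \neq G_0^c(\R)$ in general, but the torus elements relevant to strongly $\h_0$-regular subalgebras are built from the $\exp(\ad(x))$ with $x\in\h_0\subseteq\g_0$, hence automatically lie in $G_0$. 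The main obstacle I anticipate is precisely bookkeeping item (c): making rigorous that the component-group and torus contributions to the ``real form ambiguity'' in \cite{dfg2} are realised by honest elements of the possibly-smaller group $G_0$, rather than only of $G_0^c(\R)$. Given that the excerpt's Remark already flags this issue and asserts it can be checked directly, I would handle it by a short lemma isolating exactly which one-parameter subgroups are needed and verifying each lies in $(G_0^c(\R))^\circ \leq G_0$, then cite \cite{dfg2} for the rest.
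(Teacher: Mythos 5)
The paper does not actually prove this proposition: it cites \cite[Proposition 24]{dfg2} for the proof, and the Remark at the start of Section \ref{secrealCA} is the paper's entire justification that the result survives the passage from the connected group $G_0^c(\R)$ assumed in \cite{dfg2} to the more general $G_0$ of Section \ref{secsetup}. Your top-level plan --- quote \cite[Proposition 24]{dfg2} essentially verbatim and check that $(G_0^c(\R))^\circ \leq G_0 \leq G_0^c(\R)$ does not break the argument --- is therefore exactly what the authors do, and your observations (a) and (b) are the right ones.

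However, your sketch of the underlying argument locates the difficulty in the wrong place, and the step you single out as the ``main obstacle'' rests on a false premise. In the setting of this subsection $\h_0^c$ is a Cartan subalgebra of $\g^c$, so a semisimple $\h_0$-regular subalgebra \emph{is} determined by its root set: $\fs^c$ is the sum of the root spaces $\g^c_\alpha$ for $\alpha\in\Psi(\fs)$ and of the span of the corresponding coroots (semisimplicity excludes any central toral part), and then $\fs=\fs^c\cap\g$, because $\fs\subseteq\fs^c\cap\g$ while $\dim_\R(\fs^c\cap\g)\leq\dim_\C\fs^c=\dim_\R\fs$. Hence after lifting the Weyl element to $n\in N_{G_0}(\h_0)$ and replacing $\fs$ by $n(\fs)$, the equality $\Psi(n(\fs))=\Psi(\fs')$ forces $n(\fs)=\fs'$ outright: there are no two distinct ``real forms of $\fs^c$ inside $\g$'' to be matched, and no transitivity statement about tori and component groups is needed. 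The substantive direction is the one you dismiss as formal: given $g(\fs)=\fs'$ with $g\in G_0$, both $\h_0$ and $g(\h_0)$ are maximally noncompact Cartan subalgebras of the reductive algebra $\fn_0(\fs')$ (this is exactly what strong $\h_0$-regularity buys), and these are conjugate by a product of inner automorphisms $\exp(\ad_\g(x))$ with $x\in\fn_0(\fs')\subseteq\g_0$, which normalize $\fs'$ and lie in $(G_0^c(\R))^\circ\leq G_0$; note that \cite[Proposition 7.29]{knapp} concerns Cartan subspaces of $\g_0\cap\fp$ under the compact group and is not the right tool for this step. Finally, the split-torus issue you isolate as item (c) is genuine, but it affects the enumeration procedures of \cite[Section 10.2.2]{dfg2} (finding the nilpotent orbits attached to a given carrier algebra), not the proof of this conjugacy criterion --- which is precisely the distinction the paper's Remark is drawing.
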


We recall from \cite[(7.93)]{knapp} that $W(\h_0)$ is a subgroup of the complex Weyl group of $\g_0^c$ with respect to $\h_0^c$, hence it acts on complex roots.

This leads to the following approach for listing the carrier algebras of
$\g$ up to $G_0$-conjugacy.
Let $\h_0$ be a Cartan subalgebra of $\g_0$. We compute the carrier algebras
of $\g^c$ up to $G_0^c$-conjugacy, using the root system $\Phi(\h_0^c)$ of
$\g^c$ relative to $\h_0^c$. For each such carrier algebra $\fs^c$ we first
compute all root systems $w\cdot \Psi(\fs^c)$, where $w$ runs over the Weyl
group of $\Phi(\h_0^c)$ and then compute the semisimple subalgebras with these
root systems. These subalgebras are all the  $\h_0^c$-regular carrier algebras
in $\g^c$.   We eliminate those that are not
contained in $\g$, and those that are not strongly $\h_0$-regular.
Furthermore, we eliminate copies that are conjugate under the real Weyl group
$W(\h_0)$. Let $\h_0^1,\ldots,\h_0^{t}$ be the Cartan subalgebras of $\g_0$
up to $G_0$-conjugacy; we carry out the outlined procedure for each
$\h_0^i$, and thereby find all carrier algebras in $\g$, up to
$G_0$-conjugacy.

We note that the list of Cartan subalgebras of $\g_0$, up to $G_0$-conjugacy,
can be obtained using algorithms described in \cite{dfg}. (In that reference, conjugacy under the adjoint group is used,
but, for almost all types, that does not make a difference, see
\cite[Theorem 11]{sugiura} or \cite[Theorem 8]{kostantcsa}.)
An algorithm for
computing the real Weyl group corresponding to a given Cartan subalgebra
has been given in \cite{adclo}.
In the case of our main example, however, things are rather straightforward.

\begin{example}
Let the notation be as in Examples \ref{exa:1}, \ref{exa:2},  \ref{exa:2b}, and \ref{exa:3}. Up to conjugacy under $\mathcal{G}=\SL_2(\R)$, the Lie algebra $\sl_2(\R)$ has two Cartan subalgebras: a split one  and a compact one.  Let $\h\subseteq \sl_2(\R)$ be a Cartan subalgebra, with corresponding real
Weyl group $W(\h)= N_{\mathcal{G}}(\h)/Z_{\mathcal{G}}(\h)$. If $\h$ is split then
$W(\h)$ has order 2, and is equal
to the Weyl group of the root system of $\sl_2(\C)$ with respect to $\h^c$. If $\h$ is compact, then $W(\h)$ is trivial. All this can
be verified by a direct calculation, see also  \cite[pp.\ 487 \& 489]{knapp}. It follows that up to $\widetilde{G}_0$-conjugacy, $\tilde \g_0$  has 16
Cartan subalgebras. Thus, $\g_0$ has 16 Cartan subalgebras, up to $G_0$-conjugacy, and they are of the form $\h_0=\h^1\oplus \cdots \oplus \h^4$,
where $\h^i$ is split or compact in the $i$-th direct summand of $\g_0$.
The real Weyl group is
\[W(\h_0) = N_{G_0}(\h_0)/Z_{G_0}(\h_0)= W(\h^1)\times \cdots \times W(\h^4).\]
Note that $\h_0$ is a Cartan subalgebra of $\g$, hence $W(\h_0)$ is a subgroup
of the Weyl group of the root system of $\g^c$ with respect
to the Cartan subalgebra $\h_0^c = \h_0\otimes_\R \C$, see
\cite[(7.93)]{knapp}. In particular, as mentioned above, $W(\h_0)$ acts acts
on that root system.\exdone
\end{example}

\subsubsection{Listing nilpotent orbits}
Unlike the complex case, listing the carrier algebras up to conjugacy does
not immediately yield all nilpotent orbits. As already remarked in the
previous subsection, it can happen that two non-conjugate nilpotent elements
have the same carrier algebra. Let $\fs\subseteq \g$ be a carrier algebra.
We say that a nilpotent $e\in \g_1$ \emph{corresponds} to $\fs$ if there
is a homogeneous $\sl_2$-triple $(h,e,f)$ and a maximally noncompact
Cartan subalgebra in $\fz_0(\fa)$, where $\fa$ is the subalgebra spanned
by $\{h,e,f\}$, such that the carrier algebra constructed like in Section
\ref{seccacomp} is equal to $\fs$. For $e$ to correspond to $\fs$ it is
necessary that $e$ is in general position in $\fs_1$, but this is not enough.

In order to decide whether $e$ corresponds to $\fs$ we use the following
theorem (for whose proof we refer to \cite[Proposition 35]{dfg2}).
In order to formulate it, we need a definition. A subalgebra
$\fb\leq \g$ is said to be an $\R$-split torus, if it is abelian, and for all
$x\in \fb$ we have that $\ad_\g(x)$ is semisimple with eigenvalues in $\R$.
The {\em real rank} of a real reductive Lie algebra is the dimension of a
maximal $\R$-split torus. The next theorem is \cite[Proposition 35(b)]{dfg2}.

\begin{theorem}\label{thm:corrse}
Let $e\in \fs_1$ be in general position, lying in the homogeneous $\sl_2$-triple
$(h,e,f)$ with $h\in \fs_0$, $f\in \fs_{-1}$. Let $\fa$ be the subalgebra
spanned by $\{h,e,f\}$. Then $e$ corresponds to $\fs$ if and only if the real
ranks of $\fz_{\g_0}(\fa)$ and $\fz_{\g_0}(\fs)$ coincide.
\end{theorem}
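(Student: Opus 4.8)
The plan is to prove both implications by exploiting the fact that, once $e$ is in general position in $\fs_1$, the carrier algebra constructed from the triple $(h,e,f)$ is canonically built from a maximally noncompact Cartan subalgebra $\h_z$ of $\fz_0(\fa)$, and that the resulting graded subalgebra $\g(\ft,e)$ only depends on the choice of $\h_z$ up to $Z_{G_0}(\fa)$-conjugacy. So the strategy is: first recall from the complex theory (Section \ref{seccacomp}) and from \cite[Proposition 34]{dfg2} that the carrier algebra of $e$ constructed from \emph{any} maximally noncompact Cartan subalgebra of $\fz_0(\fa)$ is well-defined up to $G_0$-conjugacy, and is semisimple, $\Z$-graded, locally flat, and regular. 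Then observe that $\fs$ itself, being a carrier algebra, satisfies all these properties; what we must decide is whether the canonical construction applied to our particular $(h,e,f)$ actually lands on $\fs$ (up to conjugacy), as opposed to some other carrier algebra $\fs'$ into which $e$ also embeds in general position.

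For the forward direction, suppose $e$ corresponds to $\fs$, so by definition there is a homogeneous triple $(h',e,f')$ and a maximally noncompact Cartan subalgebra $\h_z' \leq \fz_0(\fa')$ (with $\fa'$ spanned by $h',e,f'$) whose carrier-algebra construction yields $\fs$. The key point is that, by Lemma \ref{lem:sl2C} applied in $\g^c$ together with the Malcev-type uniqueness of $\sl_2$-triples through a fixed nilpotent element, the triples $(h,e,f)$ and $(h',e,f')$ are conjugate by an element of $Z_{G_0}(e)^\circ$; so without loss of generality $\fa = \fa'$. Then $\h_z$ (which we chose maximally noncompact in $\fz_0(\fa)$) and $\h_z'$ are conjugate under the adjoint group of $\fz_0(\fa)$, hence under $Z_{G_0}(\fa)$, and the constructions agree up to conjugacy. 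What remains is to translate the statement ``$e$ corresponds to $\fs$'' into the numerical condition. Here the crucial identity is that $\fz_0(\fs)$, the reductive centraliser of the whole carrier algebra in $\g_0$, has $\h_z$ (the maximally noncompact Cartan subalgebra of $\fz_0(\fa)$) sitting inside it in a way that exhibits $\h_z$ as a maximal $\R$-split torus of $\fz_0(\fs)$ — this is essentially the definition of the carrier algebra being ``correctly'' attached, i.e.\ the $\ft$-weight grading does not split off any further semisimple part. Since $\h_z$ is by construction a maximal $\R$-split torus of $\fz_0(\fa)$ as well (it is maximally noncompact there, and $\h_z \subseteq \g_0 \cap \fp$ by the analogue of \eqref{eqh0}), the real ranks of $\fz_{\g_0}(\fa)$ and $\fz_{\g_0}(\fs)$ coincide.

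For the converse, suppose the real ranks agree. Run the canonical construction from $(h,e,f)$ using a maximally noncompact Cartan subalgebra $\h_z$ of $\fz_0(\fa)$: this produces \emph{some} carrier algebra $\fs'$ to which $e$ corresponds, and by construction $\fs' \supseteq \fa$, $e \in \fs'_1$ in general position, and $\h_z$ is a maximal $\R$-split torus of $\fz_{\g_0}(\fs')$. Thus $\mathrm{rk}_\R \fz_{\g_0}(\fs') = \mathrm{rk}_\R \fz_{\g_0}(\fa) = \mathrm{rk}_\R \fz_{\g_0}(\fs)$, the last equality by hypothesis. Now both $\fs$ and $\fs'$ are carrier algebras containing $\fa$ with $e$ in general position in degree $1$; over $\C$ they have the same complexification (both equal the complex carrier algebra of $e$, which is unique up to $G_0^c$-conjugacy, and after conjugating we may assume they literally share $\fa$ and hence $\ft^c$, forcing $\fs^c = \fs'^c = \g^c(\ft^c,e)$-derived-algebra). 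So $\fs$ and $\fs'$ are two real forms of the same complex carrier algebra, both $\Z$-graded compatibly, both containing the same $\fa$, and with the same real rank of $\g_0$-centraliser; by the uniqueness statement underlying the real carrier-algebra theory (\cite[Proposition 35]{dfg2} and the conjugacy results preceding it, in particular Proposition \ref{prop:conjstrong}) this forces $\fs$ and $\fs'$ to be $G_0$-conjugate, and moreover conjugate by an element fixing $\fa$. Hence $e$ corresponds to $\fs$.

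The main obstacle I expect is the bookkeeping in matching up the two triples and the two choices of auxiliary Cartan subalgebra $\h_z$ simultaneously: one needs that after conjugating $(h,e,f)$ to $(h',e,f')$ by an element of $Z_{G_0}(e)$, the image of $\h_z$ is still \emph{maximally noncompact} in $\fz_0(\fa')$ — this is not automatic since conjugation by a non-compact element can change which Cartan subalgebras are maximally noncompact, and one must invoke the freedom to further adjust by $Z_{G_0}(\fa')$ to repair it. The cleanest way around this is probably to avoid conjugating the triples at all and instead argue entirely at the level of real ranks: show directly that ``$e$ corresponds to $\fs$'' is equivalent to ``$\h_z$ is a maximal $\R$-split torus of $\fz_{\g_0}(\fs)$,'' which is a purely numerical condition, and then note $\h_z$ is always a maximal $\R$-split torus of $\fz_{\g_0}(\fa)$ — reducing everything to the rank equality. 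That reduction is the heart of the matter and where the cited \cite[Proposition 35(a)]{dfg2} must be used to supply the non-trivial direction.
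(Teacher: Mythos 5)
The paper does not prove Theorem~\ref{thm:corrse} at all: it is quoted verbatim from \cite[Proposition 35(b)]{dfg2}, so your proposal can only be judged on its own merits, and it has one repairable error and one genuine gap. In the forward direction, the assertion that $\h_z$ is a maximal $\R$-split torus of $\fz_{\g_0}(\fa)$ and of $\fz_{\g_0}(\fs)$ is wrong as stated: $\h_z$ is a \emph{maximally noncompact Cartan subalgebra}, which may have a nontrivial compact part, and the appeal to the analogue of \eqref{eqh0} fails because that equation concerns a \emph{split} Cartan subalgebra of $\g_0$, whereas $\fz_{\g_0}(\fa)$ need not possess a split Cartan subalgebra at all. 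The correct (and easily supplied) substitute is: since $\lambda$ vanishes on $\h_z$, the subalgebra $\h_z$ centralizes every $\g_k^c(\ft^c,e)$ and hence all of $\fs$, so $\h_z\subseteq\fz_{\g_0}(\fs)\subseteq\fz_{\g_0}(\fa)$; a sandwich on noncompact dimensions of maximally noncompact Cartan subalgebras then yields the equality of real ranks. With that repair, plus the Malcev-type conjugacy of homogeneous triples through $e$ that you correctly invoke, the forward direction is fine.

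The converse is where the proof is genuinely missing. You reduce it to showing that $\fs$ and the carrier algebra $\fs'$ actually produced by the construction are $G_0$-conjugate by an element fixing $\fa$, and for this you cite ``the uniqueness statement underlying the real carrier-algebra theory (\cite[Proposition 35]{dfg2} and \ldots Proposition~\ref{prop:conjstrong})''. Citing \cite[Proposition 35]{dfg2} is circular, since that \emph{is} the statement being proved; and Proposition~\ref{prop:conjstrong} compares two strongly $\h_0$-regular subalgebras via real-Weyl-group conjugacy of their root subsystems relative to a common Cartan subalgebra of $\g_0$ --- it says nothing about two real forms sharing $\fa$ whose $\g_0$-centralizers have equal real rank, and even abstract conjugacy of $\fs$ and $\fs'$ would not show that $e$ corresponds to $\fs$ unless the conjugating element fixes $e$. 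The two missing ingredients are: (a) if a Cartan subalgebra $\tilde\h$ of $\fz_{\g_0}(\fa)$ is contained in $\fz_{\g_0}(\fs)$, then the construction run with $\tilde\h$ produces a carrier algebra containing $\fs$ (because $h$ acts on $\fs_k$ by $2k$ and $\tilde\h$ centralizes $\fs$), which must equal $\fs$ by comparing dimensions with the unique complex carrier algebra of $e$; and (b) the absolute (complex) ranks of $\fz_{\g_0}(\fa)$ and $\fz_{\g_0}(\fs)$ already coincide by the complex theory applied to $\fs^c$, so a maximally noncompact Cartan subalgebra of $\fz_{\g_0}(\fs)$ is automatically a Cartan subalgebra of $\fz_{\g_0}(\fa)$, and the hypothesis on real ranks makes it maximally noncompact there --- whence $e$ corresponds to $\fs$ by (a). Your closing paragraph correctly identifies this reduction as ``the heart of the matter'' but then defers it to \cite[Proposition 35(a)]{dfg2}, which leaves the proposal as a restatement of the theorem rather than a proof of it.
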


Given the list of carrier algebras, we need to find the
nilpotent orbits to which each carrier algebra corresponds. Let $\fs$ be such
a carrier algebra. Then we have to find, up to $G_0$-conjugacy, all
nilpotent $e\in \fs_1$ in general position. Fix a basis $\{y_1,\ldots,y_m\}$
of $\fs_1$; it is straightforward to find a polynomial $p\in \R[T_1,\ldots, T_m]$ such that $e=\sum_i t_i y_i$ is in general position in $\fs_1$
if and only if $p(t_1,\ldots,t_m)\neq 0$. This gives an explicit description of the set $\Gamma$ of elements in $\fs_1$ that are
in general position.

The next problem is to reduce the set $\Gamma$, using the action of $G_0$.
We use some strategies for this, that are, however, not always guaranteed to
succeed. It is therefore more appropriate to call them ``methods'' rather
than ``algorithms''. First, we try to find a smaller set $\Gamma_1\subseteq\Gamma$ having two properties:
\begin{items}
\item[$\bullet$] For each $e\in \Gamma$ there exists $g\in G_0$ with $g(e)\in \Gamma_1$.
\item[$\bullet$] There are indices $1\leq i_1<\ldots<i_r\leq m$ such that $\Gamma_1$
consists of all elements $t_{i_1}y_{i_1}+\cdots + t_{i_r} y_{i_r}$ with
$t_{i_1}\cdots t_{i_r}\neq 0$.
\end{items}
We use two constructions of elements of $G_0$ for this; these elements have
to stabilise $\fs_1$, otherwise they are rather difficult to use
for our purpose.
Firstly, if $\fs_0$ contains a nilpotent element $u$, then we consider
$\exp(t\ad_{\g}(u))\in G_0$ for  $t\in \R$. Secondly, if $\fs_0$ has a compact
subalgebra $\fa$ (for example, a compact torus), then we construct the
connected algebraic subgroup $A^c\leq G_0^c$ with Lie algebra
$\ad_{\g^c}(\fa^c)$. Since by a theorem of Chevalley \cite[\S VI.5, Proposition 2]{cheviii} compact Lie groups
are algebraic, the set of real points $A^c(\R)$ is connected in the Euclidean
topology, and therefore contained in $G_0$.
In particular, if $\fs$ is strongly
$\h_0$-regular, then we can apply this construction to compact subalgebras $\fa$ of $\h_0$.

Subsequently, we act again with explicit elements of $G_0$ in order
to find a subset $\Gamma_2\subseteq \Gamma_1$ with the same properties as
$\Gamma_1$, but with the extra condition that the coefficients $t_{i_j}$ lie in
a finite set, preferably in $\{ 1, -1\}$. Good candidates for elements
that help to achieve this come from a torus in $G_0$ that acts diagonally
on $\fs_1$.

Finally from $\Gamma_2$ we eliminate the elements $e$ such that $\fs$ is
not a carrier algebra of $e$; for that we use Theorem \ref{thm:corrse}.
This yields another subset $\Gamma_3\subseteq \Gamma_2$. We then try to show that the elements of
$\Gamma_3$ are pairwise {\em not} $G_0$-conjugate. Section \ref{SNO} gives more details on how we attempt this task. After having done that, $\Gamma_3$ is
the set of nilpotent elements corresponding to $\fs$, up to $G_0$-conjugacy.
In the next subsection we illustrate these
methods by some examples, all relative to our main example.

\subsection{Examples}\label{sec:carexa}  
We now provide a number of explicit examples to illustrate the method described in Section \ref{carrier1}. Throughout, we use the notation of our main example, Examples \ref{exa:1} and \ref{exa:2}, and
write an element of
$\widetilde{G}_0^c(\R)=
(\SL_2(\R))^4$ as
\begin{equation}\label{eqg}
\quad g=\left(\begin{smallmatrix} a_1 & b_1 \\ c_1 & d_1 \end{smallmatrix}\right) \times\cdots
\times \left(\begin{smallmatrix} a_4 & b_4 \\ c_4 & d_4 \end{smallmatrix}\right),
\end{equation}
where each $a_i,b_i,c_i,d_i\in \R$ with $a_id_i-b_ic_i=1$; each $2\times 2$ matrix is given with respect to the chosen basis $(1,0)$ and $(0,1)$ of the natural $\sl_2(\R)$-module $V_2$, see Example \ref{exa:2}. We also  use the fact that the actions of $G_0$ on $\g_1$ and of
$\widetilde{G}_0^c(\R)$ on $V_2^{\otimes 4}=V_2\otimes V_2\otimes V_2\otimes V_2$
are equivalent; we fix a module isomorphism $\sigma \colon V_2^{\otimes 4}
\to \g_1$ and, by abuse of notation, define $\sigma\colon \GL(V_2^{\otimes 4})\to \GL(\g_1)$ by the following commuting diagram:

\begin{displaymath}
\xymatrix{
\widetilde{G}_0^c(\R) \ar[r]^{R^c} \ar[d] & G_0 \ar[d] \\
\GL(V_2^{\otimes 4}) \ar[r]^{\sigma} & \GL(\g_1).
}
\end{displaymath}
We go back and forth: sometimes we use elements of $G_0$ acting on
$\g_1$, sometimes we use elements of $\widetilde{G}_0^c(\R)$. Also, in order
to use the method based on Gr\"obner basis (see Section \ref{sec:gbconj}),
we directly use the action of $\widetilde{G}_0^c(\R)$ on
$V_2^{\otimes 4}$.

\begin{example}
We consider a carrier algebra $\fs$ that, as a Lie algebra, is just $\g$ itself.
It is strongly $\h_0^1$-regular, where $\h_0^1$ is the split Cartan subalgebra
of $\g_0$, spanned by $\{h_1,\ldots,h_4\}$. We have that
\begin{align*}
\fs_0 &= \Span( x_{\alpha_1}, x_{-\alpha_1}, h_1,h_2,h_3,h_4)\\
\fs_1 &= \Span( x_{\alpha_2+\alpha_3}, x_{\alpha_2+\alpha_4},
x_{\alpha_1+\alpha_2+\alpha_3}, x_{\alpha_1+\alpha_2+\alpha_4}, x_{-\alpha_2},
x_{-\alpha_1-\alpha_2}).
\end{align*}
For brevity we also denote the basis elements of $\fs_1$ by $y_1,\ldots,y_6$. We obtain that $\sum_i t_i y_i \in \fs_1$ is in general position if and only if
$p(t_1,\ldots,t_6)\neq 0$, where
$$p=(T_2T_5+T_4T_6)(T_1T_5+T_3T_6)(T_1T_4-T_2T_3).$$
Note that the subalgebra $\fs_0$ contains two root vectors. Thus, we consider the restrictions
of $\exp( s\ad_\g (x_{\alpha_1}))$ and $\exp( s\ad_\g(x_{-\alpha_1}))$ to $\fs_1$.
Relative to the given basis of $\fs_1$, they have matrices
$$A(s)=\left(\begin{smallmatrix} 1 & 0 & 0 & 0 & 0 & 0\\
0 & 1 & 0 & 0 & 0 & 0\\
-s & 0 & 1 & 0 & 0 & 0\\
0 & -s & 0 & 1 & 0 & 0\\
0 & 0 & 0 & 0 & 1 & s\\
0 & 0 & 0 & 0 & 0 & 1
\end{smallmatrix}\right)\quad\text{ and }\quad
B(s)=\left(\begin{smallmatrix} 1 & 0 & -s & 0 & 0 & 0\\
0 & 1 & 0 & -s & 0 & 0\\
0 & 0 & 1 & 0 & 0 & 0\\
0 & 0 & 0 & 1 & 0 & 0\\
0 & 0 & 0 & 0 & 1 & 0\\
0 & 0 & 0 & 0 & s & 1
\end{smallmatrix}\right).$$
(We use the column convention: the first column of $A(s)$ gives the
image of $\exp( s\ad_\g(x_{\alpha_1}))$ on $y_1$ and so on.)
By inspecting the polynomial $p$, we see that an element in general position cannot have
both $t_1=t_3=0$. If $t_1=0$, then after acting with $B(1)$ we get a new
element with $t_1\neq 0$, so we may assume $t_1\neq 0$. By acting with
$A(t_3/t_1)$ we obtain a new element with $t_3=0$, so we may assume $t_3=0$.
Using $p$, we deduce that $t_4t_5\ne 0$. By acting
with $B(t_2/t_4)$ we construct a new element with $t_2=0$. In conclusion, our set $\Gamma_1$
consists of $t_1y_1+t_4y_4+t_5y_5+t_6y_6$ with all $t_1t_4t_5t_6\neq 0$.

In order to obtain the set $\Gamma_2$ it is useful to transform $\Gamma_1$
to a subset of $V_2^{\otimes 4}$. It turns out that this transformation maps
$y_1 \mapsto -(-++-)$, $y_4\mapsto (++-+)$, $y_5\mapsto (+---)$, $y_6\mapsto
(----)$,  so the elements of $\Gamma_1$ are
$$u=u_1 (-++-)+u_2(++-+)+u_3(+---)+u_4(----),$$
with all $u_i\neq 0$. Now take a $g$ as in \eqref{eqg}, with $b_i=c_i=0$ and
$d_i=a_i^{-1}$ for all $i$. Then
\begin{align*}
g\cdot (-++-) &= a_1^{-1}a_2a_3a_4^{-1} (-++-)\\
g\cdot (++-+) &= a_1a_2a_3^{-1}a_4 (++-+)\\
g\cdot (+---) &= a_1a_2^{-1}a_3^{-1}a_4^{-1} (+---)\\
g\cdot (----) &= a_1^{-1}a_2^{-1}a_3^{-1}a_4^{-1} (----),
\end{align*}
and we can easily find such a $g$ such that $g\cdot u$ has $u_1=1$, so we
may assume that $u_1=1$. Now we use a different $g$, one which does not change the first coefficient $u_1=1$, that is, we need $g$ with $a_2a_3=a_1a_4$. Acting with such a $g$ means that the third coefficient $u_3$ is multiplied by $a_4^{-2}$, so  we may assume that
 $u_3=\pm 1$. Continuing like that, we get that $u_1=1$ and $u_2,u_3,u_4\in\{\pm1\}$, thus the elements in our set $\Gamma_2$ are
$$(-++-)\pm (++-+)\pm (+---)\pm (----).$$

It turns out that all these elements have $\fs$ as carrier algebra, so $\Gamma_3=\Gamma_2$.
Moreover, Gr\"obner basis computations
(see Section \ref{sec:gbconj}) show that all elements of $\Gamma_2$
are not $\widetilde{G}_0^c(\R)$-conjugate. We conclude that this carrier
algebra corresponds to 8 real nilpotent orbits.\exdone
\end{example}

\begin{example}
Here we consider a carrier algebra $\fs$ that, as Lie algebra, is isomorphic
to $\sl_3(\R)$. It is strongly $\h_0^1$-regular, as in the previous example,
and here
$$\fs_1 = \Span( x_{\alpha_1+\alpha_2+\alpha_3+\alpha_4},x_{-\alpha_1-\alpha_2-\alpha_3}).$$
We also denote these basis elements by $y_1,y_2$. An element $t_1y_1+t_2y_2$
is in general position if and only if $t_1t_2\neq 0$, which immediately gives us the set $\Gamma_1$. As in the previous example, we transform the elements to
$V_2^{\otimes 4}$; we obtain that the elements of $\Gamma_1$ are
$u_1(++--)+u_2(---+)$.
As before, we construct $\Gamma_2=\Gamma_3$, whose elements are
$(++--)\pm (---+)$. However, in this case, by considering the polynomial
equations equivalent to the $\widetilde{G}_0^c(\R)$-conjugacy of these
elements, and solving them, we find the element
$$g = \left(\begin{smallmatrix} 0 & 1 \\ -1 & 0 \end{smallmatrix}\right)\times
\left(\begin{smallmatrix} 0 & 1 \\ -1 & 0 \end{smallmatrix}\right)\times
\left(\begin{smallmatrix} 1 & 0 \\ 0 & 1 \end{smallmatrix}\right)\times
\left(\begin{smallmatrix} 0 & 1 \\ -1 & 0 \end{smallmatrix}\right),$$
which maps $(++--)-(---+)$ to $(++--)+(---+)$. So the two elements of $\Gamma_3$
are conjugate, and we conclude that this carrier algebra corresponds to one
nilpotent orbit.\exdone
\end{example}

\begin{example}
In this example, $\fs$ is a carrier algebra that, as Lie algebra, is isomorphic
to $\mathfrak{su}(1,2)$; it is a non-split real form of $\sl_3(\R)$.
It is strongly $\h_0^{14}$-regular, where $\h_0^{14}$ is spanned by
$$\{h_4,~ x_{\alpha_1}-x_{-\alpha_1},~x_{\alpha_3}-x_{-\alpha_3},x_{\alpha_0}-x_{-\alpha_0}\},$$
so it has compact dimension 3, and non-compact dimension 1. We have
\begin{eqnarray*}\fs_1&=&\Span( x_{\alpha_2+\alpha_4}-x_{\alpha_1+\alpha_2+\alpha_3+\alpha_4}+
x_{-\alpha_1-\alpha_2}+x_{-\alpha_2-\alpha_3},\\&&\hspace*{1ex} x_{\alpha_1+\alpha_2+\alpha_4}+
x_{\alpha_2+\alpha_3+\alpha_4}-x_{-\alpha_2}+x_{-\alpha_1-\alpha_2-\alpha_3}),
\end{eqnarray*}
and denote these basis elements by $y_1,y_2$. This time  $t_1y_1+t_2y_2$
is in general position if and only if $t_1^2+t_2^2\neq 0$. In order to find the
set $\Gamma_1$, we consider $x=x_{\alpha_1}-x_{-\alpha_1}\in \h_0^{14}$, which stabilises $\fs_1$. We let $A^c\subseteq G_0^c$ be the connected
algebraic group whose Lie algebra is spanned by $\ad_\g(x)$. This group is compact,
and therefore its set of real points $A^c(\R)$ is contained in $G_0$.
We have that $A^c$ is a 1-dimensional torus, and we can construct an
isomorphism $\C^* \to A^c$, $t\mapsto a(t)$. The matrix of the restriction
of $a(t)$ to $\fs_1$, with respect to the given basis, is
$$\left(\begin{smallmatrix} \frac{1}{2}(t+t^{-1}) & \frac{1}{2}\imath (-t+t^{-1})\\[1ex]
\frac{1}{2}\imath
(t-t^{-1}) & \frac{1}{2}(t+t^{-1}) \end{smallmatrix}\right).$$
This matrix has coefficients in $\R$ if and only if $t=x+\imath y$ with
$x,y\in \R$ and $x^2+y^2=1$. With these substitutions, the restriction of $a(t)$ to $\fs_1$ has the form
$$\left(\begin{smallmatrix} x & y \\ -y & x \end{smallmatrix}\right).$$
Using such elements, it is easy to see that each nonzero element of
$\fs_1$ is conjugate to a multiple of $y_1$. Under the identification
of $\g_1$ and $V_2^{\otimes 4}$, the element $y_1$ maps to
$$u=-(-+-+)-(++--)-(+--+)+(----).$$
By acting with a diagonal element, like in the first example, it is
straightforward to see that $u$ is conjugate to any nonzero scalar multiple
of $u$: it suffices to set $a_i=1$, except $a_3=\lambda^{-1}$ to get
an element of $\widetilde{G}_0^c(\R)$ that multiplies $u$ by $\lambda$.
It follows that
also in this case $\fs$ corresponds to one nilpotent orbit.
Over $\C$ it is conjugate to the orbit of the previous example. Over $\R$
they cannot be conjugate as the carrier algebras are not isomorphic.\exdone
\end{example}

\section{Separating real nilpotent orbits}\label{SNO}
\noindent We consider the problem to decide whether two given nonzero nilpotent
$e,e'\in \g_1$ are conjugate under $G_0$. We describe a few invariants
that allow in some cases to prove that $e$ and $e'$ are \emph{not} conjugate, namely, if the invariant in question has different values for $e$ and $e'$. We also briefly discuss a method based on polynomial equations and
Gr\"obner bases, by which it is also possible, on some occasions, to find
a conjugating element, thereby showing that $e$ and $e'$ are conjugate.

\subsection{$\pmb{\alpha}$-labels}

Since $e,e'\in \g_1$,  they also lie in $\g^c$. Clearly, if $e$ and $e'$ are not
$G^c$-conjugate, then they cannot be $G_0$-conjugate either.
The $\alpha$-labels of a nilpotent element $e\in \g^c$ identify the
$G^c$-orbit of $e$; they are the labels of the weighted Dynkin diagram of
the orbits of $e$ (see \cite{colmcgov}). We assume that $e$ lies in an
$\sl_2$-triple $(h,e,f)$ with $h$ in a given Cartan subalgebra $\h^c$ of $\g^c$.
(Note that this holds for the nilpotent elements that are found by our
classification methods.) Let $\Phi$ denote the root system
of $\g^c$ with respect to $\h^c$. Let $\Delta = \{ \alpha_1,\ldots,\alpha_\ell\}$
be a fixed set of simple roots, and let $W$ denote the Weyl group.
Then $W$ also acts on $\h^c$, and there is a unique $h'\in \h^c$ which is
$W$-conjugate to $h$, and such that $\alpha_i(h')\geq 0$ for $1\leq i\leq \ell$.
Then $\alpha_i(h')\in \{0,1,2\}$ for all $i$, see \cite[Section 3.5]{colmcgov}; these are the $\alpha$-labels
of $e$.

\subsection{$\pmb{\beta}$-labels}\label{sec:betalab}

Let $G$ be the connected Lie group with Lie algebra $\ad_\g (\g)$. Here we
suppose that $G_0$ is connected as well. Then $G_0\subseteq G$, so
if $e$ and $e'$ are not $G$-conjugate, then they cannot be $G_0$-conjugate either.
The $\beta$-labels identify the $G$-orbit of $e$, using the
Kostant-Sekiguchi correspondence (see \cite{colmcgov} to which we refer also for the background of the rest of this subsection). In order to describe
how to compute the $\beta$-labels, we assume that $e$ lies in a real
Cayley triple $(h,e,f)$, see Section \ref{sec:realsl2}.  (Note that the
nilpotent elements found by the method of that section do lie in
such a triple.) Let $\g=\fk\oplus \fp$ be the fixed Cartan decomposition of
$\g$, and let $\g^c = \fk^c\oplus \fp^c$ be its complexification.
First we compute the {\em Cayley transform} $(h_0,e_0,f_0)$ of the triple
$(h,e,f)$, where $h_0=\imath (e-f)$, $e_0= \tfrac{1}{2}(-\imath e
-\imath f +h)$, $f_0=\tfrac{1}{2}(\imath e+\imath f +h)$. Then $(h_0,e_0,f_0)$
is a homogeneous $\sl_2$-triple in the $\Z/2\Z$-graded Lie algebra $\g^c$.
Let $K^c$ denote the connected algebraic subgroup of $G^c$ with Lie algebra
$\ad_{\g^c}(\fk^c)$. Then the Cayley transform induces a bijection between
the nilpotent $G$-orbits in $\g$ and the nilpotent $K^c$-orbits in $\fp^c$.
Assume that $h_0$ lies in a fixed Cartan subalgebra $\h_0^c$ of $\fk^c$.
The latter algebra is reductive and we
let $\Phi_k$ denote the root system of $\fk^c$, with respect to $\h_0^c$, with
Weyl group $W_k$. Let $\{\beta_1,\ldots,\beta_m\}$ be a fixed set of simple
roots of $\Phi_k$. Then $h_0$ is $W_k$-conjugate to a unique $h_0'$ such that
$\beta_i(h_0')\geq 0$ for $1\leq i\leq m$. Now these numbers are the
$\beta$-labels of $e$. Using Lemma~\ref{lem:sl2C}
(with $K^c$ in place of $G_0^c$, and so on) it
is seen that they uniquely determine the $K^c$-orbit of $e_0$.
By the Kostant-Sekiguchi correspondence, they uniquely determine the
$G$-orbit of $e$ as well.

In the example under consideration (Example \ref{exa:1} and following), we identify the simple roots in $\Phi_k$ with the ones in $D_4$ via $\beta_1=-(\alpha_1+2\,\alpha_2+\alpha_3+\alpha_4)$, $\beta_2=\alpha_1$, $\beta_3=\alpha_4$, and $\beta_4=\alpha_3.$

\subsection{$\pmb{\gamma}$-labels}

Since $e,e'\in \g_1$,  they also lie in $\g^c$. Clearly, if $e$ and $e'$ are not
$G_0^c$-conjugate, then they cannot be $G_0$-conjugate either. We assume
that $e$ lies in a homogeneous $\sl_2$-triple $(h,e,f)$. Furthermore, assume
that $h$ lies in a fixed Cartan subalgebra $\h_0^c$ of $\g_0^c$. Let
$\Phi_0$ denote the root system of $\g_0^c$ with respect to $\h_0^c$,
with Weyl group $W_0$ and fixed set of simple roots $\{\gamma_1,\ldots,
\gamma_n\}$. Then $h$ is $W_0$-conjugate to a unique $h'\in \h_0^c$ with
$\gamma_i(h')\geq 0$ for all $i$. These are the $\gamma$-labels of $e$.
By Lemma \ref{lem:sl2C}, they uniquely identify the $G_0^c$-orbit of $e$.\par
In the example under consideration (Example 1 and following), the sets of $\beta$- and $\gamma$-labels coincide because $\mathfrak{k}^c$ and $\mathfrak{g}_0^c$ are conjugate in $\g^c$.

\begin{remark}\label{rem:delta}
As mentioned earlier, the $\gamma$- and $\beta$-labels do not provide a complete
classification of the real nilpotent orbits. However, orbits with the same $\gamma$- and $\beta$-labels can be distinguished by using tensor classifiers (see the next section). Here, for convenience, we enumerate the different orbits with the same  $\gamma$- and $\beta$-labels simply by $\delta^{(1)}$, $\delta^{(2)}$, \ldots, see Tables \ref{tabletot} and \ref{tablelable}. These "$\delta$-labels" are just a notational tool for differentiating between orbits and have no other intrinsic algebraic or geometric meaning. They are used to distinguish between nilpotent elements with the same $\gamma$ and $\beta$-labels but which differ by the signatures of tensor classifiers, see Section \ref{tclass} and, in particular, Example \ref{exlong}.
\end{remark}

\subsection{Tensor classifiers}\label{tclass}In this section we describe the concept of tensor classifier, which has proven to provide important invariants for distinguishing real nilpotent orbits. Unless otherwise noted, the notation in this section is independent of the rest.

We first recall the definition of a $(p,q)$-tensor of a reductive real Lie group $G$, see  \cite{Chambadal}. Let $D\colon G\to {\rm GL}(V_n)$ be an $n$-dimension representation of $G$, let $\{e_I\mid I=1,\ldots,n\}$ be a basis of $V_n$, and denote by  $V_n^\vee=\{f\colon V_n\to\R \mid f\text{ linear}\}$  the dual space with basis $\{e^I\mid I=1,\ldots,n\}$, such that $e^I(e_J)=\delta_{I,J}$ is the Kronecker-delta. With respect to the chosen bases,   the image $D(g)$ of $g\in G$ corresponds to an $n\times n$ matrix ${\bf D}(g)$ whose  entries ${\bf D}(g)^J{}_I$ are defined by
 \begin{align*}
 &D(g)(e_I)=\sum\nolimits_{J=1}^n {\bf D}(g)^J{}_I\,e_J\,.
 \end{align*}
Similarly, each $v\in V_n$ is described by components $v^I\in\R$ with  $v=\sum_{I=1}^n v^I \,e_I$ If $g\in G$ is given, then the components $v^{\prime I}$ of $v'=D(g)(v)$ are obtained by the multiplication  of ${\bf D}(g)$ with the column vector $(v^I)$, namely
 \begin{equation}
 v^{\prime I}=\sum\nolimits_{J=1}^n{\bf D}(g)^I{}_J\,v^J\,.\label{vpv}
 \end{equation}
 In tensor calculus it is convenient to use \emph{Einstein's summation convention}: whenever an index is repeated in an upper and in a lower position, summation over that index is understood. This yields the following short-hand notation for \eqref{vpv}:
 \begin{equation*}
 v^{\prime I}={\bf D}(g)^I{}_J\,v^J.
 \end{equation*}
Note that $D$ induces a representation $D^\vee$ of $G$ acting on $V_n^\vee$; the $n\times n$ matrix  ${\bf D}^\vee(g)$ representing $D^\vee(g)$ satisfies ${\bf D}^\vee(g)=({\bf D}(g)^{-1})^\intercal$. For positive integers $p,q\geq 0$ the $(p,q)$-\emph{tensor representation} $D^{(p,q)}$ of $G$  acts on  $V^{(p,q)}=(V_n)^{\otimes p}\otimes (V_n^\vee)^{\otimes q}$ and is defined by
\begin{equation*}
D^{(p,q)}=\underbrace{D\otimes \dots\otimes D}_p\otimes \underbrace{D^\vee\otimes \dots\otimes D^\vee}_q\,:\quad G\to {\rm GL}(V^{(p,q)}).
\end{equation*}
We fix $\{e_{I_1}\otimes \dots e_{I_p}\otimes e^{J_1}\otimes \dots e^{J_q}\mid \text{ each }I_i,J_i=1,\ldots,n\}$ as a basis of $V^{(p,q)}$. Each $T\in V^{(p,q)}$ is called a \emph{$(p,q)$-tensor}, and uniquely determined by its components
\begin{equation*}
T=T^{I_1\dots I_p}{}_{J_1\dots J_q}\cdot (e_{I_1}\otimes \dots e_{I_p}\otimes e^{J_1}\otimes \dots e^{J_q})\,,
\end{equation*}
where, according to our convention, summation over the indices $I_1,\dots,\,I_p,\,J_1,\dots, J_q$ is understood. As usual, we identify a tensor with its components relative to our chosen basis. The action of $g\in G$ on a $(p,q)$-tensor $T$ is defined by the relation between the components of $T$ and those of $T'=D^{(p,q)}(g)(T)$ with respect to the same basis, generalising (\ref{vpv}) to
\begin{equation}
T^{\prime\,I_1\dots I_p}{}_{J_1\dots J_q}={\bf D}(g)^{I_1}{}_{K_1}\dots {\bf D}(g)^{I_p}{}_{K_p}{\bf D}^\vee(g)_{J_1}{}^{L_1}\dots {\bf D}^\vee(g)_{J_q}{}^{L_q}\,T^{K_1\dots K_p}{}_{L_1\dots L_q}\,.\label{TTra}
\end{equation}
A tensor representation $D^{(p,q)}$ is in general reducible. In the following we consider \emph{symmetric}  $(2,0)$  (or \emph{contravariant}) tensors of the form $T^{IJ}=T^{JI}$, and symmetric  $(0,2)$ (or \emph{covariant}) tensors of the form $T_{IJ}=T_{JI}$. These span $G$-invariant vector spaces $V^{(2,0)}_s$ and $V^{(0,2)}_s$ of dimension $n(n+1)/2$, and therefore yield representations $D^{(2,0)}_s$ and $D^{(0,2)}_s$ of $G$. In general, these representations may further be reduced to irreducible ones:
\begin{eqnarray*}
D^{(2,0)}_s&=& D_{(1)}\oplus  D_{(2)}\oplus \dots \oplus D_{(m)}\\
D^{(0,2)}_s&=& D^\vee_{(1)}\oplus  D^\vee_{(1)}\oplus \dots \oplus D^\vee_{(m)},
\end{eqnarray*}
with corresponding vector space decompositions
\begin{equation*}
V^{(2,0)}_s=V_{(1)}\oplus  V_{(2)}\oplus \dots \oplus V_{(m)}\quad \text{ and }\quad V^{(0,2)}_s=V^\vee_{(1)}\oplus  V^\vee_{(2)}\oplus \dots  \oplus V^\vee_{(m)}\,.
\end{equation*}

For any $g\in G$, generic matrices ${\bf T}\in V_{(\ell)}$ and ${\bf T}^\vee\in V^\vee_{(\ell)}$ (with $\ell=1,\dots, m$) can be transformed according to the rule (\ref{TTra}), which amounts to the following congruence transformations:
\begin{equation}
{\bf T}\longrightarrow {\bf T}'={\bf D}(g)\,{\bf T}\,{\bf D}(g)^\intercal\quad\text{and}\quad{\bf T}\longrightarrow {\bf T}^{\vee\,\,\prime}={\bf D}^\vee(g)\,{\bf T}^\vee\,{\bf D}^\vee(g)^\intercal,\label{symtras}
\end{equation}
where the products in the right hand sides of these equations are formed
by the ordinary matrix multiplication. In turn, this
defines the representations $D_{(\ell)}$ and $D^\vee_{(\ell)}$ of $G$.

 \begin{definition}\label{def:sign} The \emph{signature}  of a square symmetric real matrix ${\bf M}$ is ${\sign}({\bf M})=(n_+,n_-)$ where $n_+$ and $n_-$ are the number of eigenvalues of ${\bf M}$ which are positive and negative, respectively.
\end{definition}
The idea behind the notion of \emph{tensor classifiers} is that the signature of a symmetric  $(2,0)$  or $(0,2)$ tensor is  $G$-invariant, being preserved by the congruence transformations \eqref{symtras}; this follows from Sylvester's Law \cite[Theorem 21.5 p.\ 376]{Chambadal}. Since Sylvester's Law does \emph{not} apply to congruence transformations by complex matrices, the signature of a tensor classifier  is an invariant which might allow us to distinguish  different $G$-orbits within a $G^c$-orbit. For this it is important that the relevant symmetric (covariant or contravariant) tensors are restricted to subspaces $V_{(\ell)}$ supporting irreducible representations of $G$.

In the context of real nilpotent orbits of symmetric spaces, tensor classifiers have first been applied to  the problem of separating the $G_0$-orbits in $\mathfrak{g}_1$, see \cite{Fre:2011uy} (and \cite{Chemissany:2012nb} for a later application). Let $\rho$ be the representation defining the action of $G_0$ on $\mathfrak{g}_1$; for any $v\in \mathfrak{g}_1$ we can consider tensors $v^{\otimes 2}=v\otimes v$, $v^{\otimes 3}=v\otimes v\otimes v$, etc., acted on by $\rho^{\otimes 2}=\rho\otimes \rho$, $\rho^{\otimes 3}=\rho\otimes \rho\otimes \rho$, etc. Each of these tensor products $\rho^{\otimes k}$, acting on $v^{\otimes k} $, is decomposed into irreducible representations $\rho_{(\ell)}$ of $G_0$. For some of these irreducible representations it may happen that the restriction of $v^{\otimes k} $ to the corresponding underlying space is described by a symmetric tensor $T^{IJ}(v)$ or $T_{IJ}(v)$, homogeneous functions of degree $k$ of the components of $v$ (with respect to a chosen basis). The signature of  $T^{IJ}(v)$ or $T_{IJ}(v)$ is then a $G_0$-invariant associated with $\rho$, that is, $\mbox{sign}(T^{IJ}(v))=\mbox{sign}(T^{IJ}(\rho(g)(v)))$ for all $g\in G_0$. We illustrate this construction with an example.

\begin{example}\label{exlong}
We use the notation of Example \ref{exa:2}. Let $\{e^{(1)}_{\alpha_1}\otimes e^{(2)}_{\alpha_2}\otimes e^{(3)}_{\alpha_3}\otimes e^{(4)}_{\alpha_4}\mid \alpha_1,\ldots,\alpha_4\in\{1,2\}\}$ be a basis of $V_2^{\otimes 4}$, where $e^{(i)}_{1}$ and $e^{(i)}_2$ are two generators of the $i$-th factor $V_2$. A generic element $v\in V_2^{\otimes 4}$ can then be written as $v=v^{\alpha_1\alpha_2\alpha_3\alpha_4}\cdot(e^{(1)}_{\alpha_1}\otimes e^{(2)}_{\alpha_2}\otimes e^{(3)}_{\alpha_3}\otimes e^{(4)}_{\alpha_4})$ where, again, Einstein's summation convention is used. Given $v\in \mathfrak{g}_1$, we can represent it in terms of the components $v^{\alpha_1\alpha_2\alpha_3\alpha_4}$ of the corresponding element in $V_2^{\otimes 4}$. For each ${\rm SL}_2(\mathbb{R})$-factor in $G_0=(\SL_2(\R))^4$, the antisymmetric matrix
\begin{equation*}
\epsilon=(\epsilon_{\alpha\beta})=\left(\begin{smallmatrix}0 & 1\cr -1 & 0\end{smallmatrix}\right)
\end{equation*}
 is an invariant tensor: if ${\bf g}=(g_\alpha{}^\beta)$ is the $2\times 2$ matrix representing $g\in {\rm SL}_2(\mathbb{R})$ in the fundamental representation\footnote{We denote the ${\rm SL}_2(\mathbb{R})$ representations by their dimension in boldface, so that, for instance, ${\bf 3}$ denotes the adjoint representation.} ${\bf 2}$, then according to \eqref{TTra} (and using $\det({\bf g})=1$) we have
 \begin{equation}
 \epsilon_{\alpha\beta}\to \epsilon'_{\alpha\beta}=g_\alpha{}^\gamma g_\beta{}^\delta\,\epsilon_{\gamma\delta}=\epsilon_{\alpha\beta}.\label{epsinv}
 \end{equation}
Given $v\in V_2^{\otimes 4}$, we can consider the tensor $v\otimes v$, whose components are  the products $v^{\alpha_1\alpha_2\alpha_3\alpha_4}\,v^{\beta_1\beta_2\beta_3\beta_4}$, and construct symmetric contravariant tensors from it. The tensor $v\otimes v$ is acted on by the representation $\rho\otimes \rho$ of $G_0$, where $\rho={\bf (2,2,2,2)}$; we have the following decomposition into irreducible representations:
\begin{equation}
\rho\otimes \rho = {\bf (3,3,1,1)}\oplus{\bf (3,1,3,1)}\oplus {\bf (3,1,1,3)}\oplus {\bf (1,3,1,3)}\oplus {\bf (1,1,3,3)}\oplus {\bf (1,3,3,1)}+\dots\label{3311}
\end{equation}

We  restrict attention to the six irreducible representations listed above, and associate them with a symmetric contravariant $G_0$-tensor. For example, consider ${\bf (3,3,1,1)}$. The restriction of $v\otimes v$ to its underlying space has the following components:
\begin{equation}
\mathbb{T}_{(1,2)}^{\alpha_1,\alpha_2;\,\beta_1\beta_2}(v)=
\mathcal{P}^{\alpha_1,\beta_1}{}_{\gamma_1,\delta_1}\mathcal{P}^{\alpha_2,\beta_2}{}_{\gamma_2,\delta_2}\,
v^{\gamma_1\gamma_2\alpha_3\alpha_4}\,v^{\delta_1\delta_2\beta_3\beta_4}\epsilon_{\alpha_3\beta_3}\epsilon_{\alpha_4\beta_4}\,,\label{Tvv}
\end{equation}
where $\mathcal{P}^{\alpha,\beta}{}_{\gamma,\delta}$ symmetrizes  a couple of ${\rm SL}_2(\mathbb{R})$-indices in a tensor, that is,
\begin{equation*}
\mathcal{P}^{\alpha,\beta}{}_{\gamma,\delta}=\frac{1}{2}\,\left(\delta_\gamma^\alpha \delta_\delta^\beta+\delta_\delta^\alpha \delta_\gamma^\beta\right)
\end{equation*}
projects components of vectors transforming in the ${\bf 2}\otimes {\bf 2}$ into components of vectors transforming into the ${\bf 3}$ of ${\rm SL}_2(\mathbb{R})$. Note that the indices $\alpha_3,\,\beta_3$ and $\alpha_4,\,\beta_4$ of the two $v$-symbols on the right hand side of (\ref{Tvv}) are summed over (or \emph{contract}) corresponding indices of the $\epsilon$-tensor. Thus, if $v$ is transformed by means of a transformation $g=(g_1,\dots,g_4)\in G_0$, the $g_3,\,g_4$ matrices end up acting on the two $\epsilon$-tensors and thus, by virtue of \eqref{epsinv}, they disappear in the expression of $\mathbb{T}_{(1,2)}^{\alpha_1,\alpha_2;\,\beta_1\beta_2}(v')$ evaluated on the transformed vector $v'=\rho(g)(v)$. The contraction of the last two indices of the two $v$-components with $\epsilon$-tensors therefore makes the resulting tensor $\mathbb{T}_{(1,2)}$ a singlet with respect to the last two ${\rm SL}_2(\mathbb{R})$-groups in $G_0$.

If we define the composite index $I=(\alpha_1\alpha_2)=((1,1),\,(1,2),\,(2,1),\,(2,2))$, the tensor $\mathbb{T}_{(1,2)}$ can be written as a $4\times 4$ matrix with entries $\mathbb{T}_{(1,2)}^{I,J}$. This is a symmetric tensor quadratic in the components of $v$, and, by the property mentioned above, its signature $(n_+,\,n_-)$ is  a $G_0$-invariant associated with the $\mathfrak{g}_1$-element $v$.  For each of the six representations in  (\ref{3311}), we can construct a matrix  $\mathbb{T}_{(i,j)}$, where the pair $(i,j)\in\{1,2,3,4\}^2$ indicates  the couple of ${\rm SL}_2(\mathbb{R})$-factors in $G_0$ with respect to which the tensor is not a singlet.
As an example, let us show how the signatures of these matrices discriminate between $G_0$-orbits (distinguished by the $\beta$-labels) within a same $G_0^c$-orbit (represented by a given $\gamma$-label), see Table \ref{tabletot}. As $G_0^c$-orbit we choose the one labelled by $\gamma^{(6;1)}$, and we obtain:
 \begin{align*}
(\gamma^{(6;1)},\,\beta^{(6;1)})&:\quad\,\,
\mbox{sign}(T_{(1,2)}(v))=\mbox{sign}(T_{(1,3)}(v))=\mbox{sign}(T_{(1,4)}(v))=(0,\,2),\\
(\gamma^{(6;1)},\,\beta^{(6;2)})&:\quad\,\,
\mbox{sign}(T_{(1,2)}(v))=(0,\,2)\quad\text{and}\quad\mbox{sign}(T_{(1,3)}(v))=\mbox{sign}(T_{(1,4)}(v))=(2,\,0),\\
(\gamma^{(6;1)},\,\beta^{(6;3)})&:\quad\,\,
\mbox{sign}(T_{(1,4)}(v))=(0,\,2)\quad\text{and}\quad\mbox{sign}(T_{(1,2)}(v))=\mbox{sign}(T_{(1,3)}(v))=(2,\,0),\\
(\gamma^{(6;1)},\,\beta^{(6;4)})&:\quad\,\,
\mbox{sign}(T_{(1,3)}(v))=(2,\,0)\quad\text{and}\quad\mbox{sign}(T_{(1,2)}(v))=\mbox{sign}(T_{(1,4)}(v))=(2,\,0),\\
(\gamma^{(6;1)},\,\beta^{(6;5)})&:\quad\,\,
\mbox{sign}(T_{(1,2)}(v))=\mbox{sign}(T_{(1,3)}(v))=\mbox{sign}(T_{(1,4)}(v))=(1,\,1).
\end{align*}

Tensor classifiers are also useful to distinguish between orbits with the same $\gamma$- and the same $\beta$-labels:
\begin{align*}
(\gamma^{(6;5)},\,\beta^{(6;5)},\,\delta^{(1)})&:\quad\,\,\,\mbox{sign}(T_{(3,4)}(v))=(0,\,1)\quad\text{and}\quad\mbox{sign}(T_{(2,4)}(v))=(0,\,1),\\
(\gamma^{(6;5)},\,\beta^{(6;5)},\,\delta^{(2)})&:\quad\,\,\,\mbox{sign}(T_{(3,4)}(v))=(0,\,1)\quad\text{and}\quad\mbox{sign}(T_{(2,4)}(v))=(1,\,0),\\
(\gamma^{(6;5)},\,\beta^{(6;5)},\,\delta^{(3)})&:\quad\,\,\,\mbox{sign}(T_{(3,4)}(v))=(1,\,0)\quad\text{and}\quad\mbox{sign}(T_{(2,4)}(v))=(0,\,1),\\
(\gamma^{(6;5)},\,\beta^{(6;5)},\,\delta^{(4)})&:\quad\,\,\,\mbox{sign}(T_{(3,4)}(v))=(1,\,0)\quad\text{and}\quad\mbox{sign}(T_{(2,4)}(v))=(1,\,0).
\end{align*}
One can construct symmetric contravariant tensors which are quartic in the components of $v$; we illustrate just one instance of such tensor classifiers. To this end we need to introduce one more ${\rm SL}_2(\mathbb{R})$-invariant tensor. Let  $\{S_1,S_2,S_3\}$ be a basis of $\mathfrak{sl}_2(\C)$, given by  Pauli matrices
\begin{equation*}
S_1=\left(\begin{smallmatrix}0 & 1 \cr 1 & 0\end{smallmatrix}\right),\quad S_2=\left(\begin{smallmatrix}0 & -i \cr i & 0\end{smallmatrix}\right),\quad S_3=\left(\begin{smallmatrix}1 & 0 \cr 0 & -1\end{smallmatrix}\right).
\end{equation*}
Let $S_{x\,\alpha}{}^\beta$ be the entry of $S_x$ in row $\alpha$ and column $\beta$. Seen as the components of a tensor which is transformed in the representation  ${\bf 3}\otimes {\bf 2}\otimes {\bf 2}$, it turns out that $S_{x\,\alpha}{}^\beta$ is an invariant tensor: if $g\in {\rm SL}_2(\mathbb{R})$, then
\begin{equation*}
S_{x\,\alpha}{}^\beta\to g_x{}^y\,g_\alpha{}^\gamma\,g^{-1}{}_\delta{}^\beta \,S_{y\,\gamma}{}^\delta=S_{x\,\alpha}{}^\beta,
\end{equation*}
which follows  from the property $gS_x g^{-1}=g^{-1}{}_x{}^y\,S_y$, where $g_x{}^y$ is  the $3\times 3$ matrix representing the element $g$ in the adjoint representation ${\bf 3}$. Clearly, by (\ref{epsinv}), also the tensor $S_{x\,\alpha\beta}=S_{x\,\alpha}{}^{\gamma}\,\epsilon_{\gamma\beta}$ is invariant since for all $g\in  {\rm SL}_2(\mathbb{R})$ we have
\begin{equation*}
 S_{x\,\alpha\beta}\to g_x{}^yg_\alpha{}^\gamma g_\beta{}^\delta S_{y\,\gamma\delta}=S_{x\,\alpha\beta}.
\end{equation*}
Another $ {\rm SL}_2(\mathbb{R})$-invariant tensor is $\eta_{xy}={\rm Tr}(S_xS_y)$, proportional to the Killing-form on $\mathfrak{sl}_2(\mathbb{R})$. We define $\eta^{xy}$ to be its inverse, that is,  $\eta^{xz}\eta_{zy}=\delta^x_y$. We can now compute, for instance, the components of $v\otimes v$ which transform in the representation ${\bf (3,3,3,3)}$ of $G_0$:
\begin{equation*}
\mathbb{T}_{x_1,x_2,x_3,x_4}(v)=S_{x_1\,\alpha_1\beta_1}S_{x_2\,\alpha_2\beta_2}S_{x_3\,\alpha_3\beta_3}S_{x_4\,\alpha_4\beta_4}
v^{\alpha_1\alpha_2\alpha_3\alpha_4}\,v^{\beta_1\beta_2\beta_3\beta_4}\,.
\end{equation*}
Consider now the quartic tensor
\begin{equation*}
\mathfrak{T}^{(1,2,3)}_{x_1,x_2,x_3;\,y_1, y_2,y_3}(v)=\mathbb{T}_{x_1,x_2,x_3,x_4}(v)\mathbb{T}_{y_1,y_2,y_3,y_4}(v)\eta^{x_4y_4}\,.
\end{equation*}
This tensor is a singlet with respect to the last factor ${\rm SL}_2(\mathbb{R})$ in $G_0$.  We can also construct $\mathfrak{T}^{(1,2,4)}_{x_1,x_2,x_4;\,y_1, y_2,y_4}$,  $\mathfrak{T}^{(1,3,4)}_{x_1,x_3,x_4;\,y_1, y_3,y_4}$, and $\mathfrak{T}^{(2,3,4)}_{x_2,x_3,x_4;\,y_2, y_3,y_4}$ , which are singlets with respect to the third, second, and first ${\rm SL}_2(\mathbb{R})$ in $G_0$, respectively. We can project each couple of indices $(x,\,y)$ of these tensors either on the symmetric traceless or on the antisymmetric representations in the product ${\bf 3}\otimes {\bf 3}$ of the corresponding factor ${\rm SL}_2(\mathbb{R})$. However, if we anti-symmetrize an odd number of couples, then we get a vanishing result. We therefore end up with four irreducible tensors for each $\mathfrak{T}^{(i,j,k)}$, giving a total of 16 tensors. Considering the composite index $u=(x_1,x_2,x_3)$ as a single index running over 27 values, the resulting tensors are represented by  16 symmetric $27\times 27$ covariant matrices, which can be used as tensor classifiers. \exdone
\end{example}

In the previous example we have illustrated instances of tensor classifiers which are quadratic and quartic in the components of $v$. Using these, together with $\alpha$-, $\beta$-, and $\gamma$-labels, a  complete classification of the real nilpotent orbits in our main example (see Example \ref{exa:1} and following) can be achieved; similarly in the example discussed in  \cite{Fre:2011uy,Chemissany:2012nb}. In various cases, the $G_0$-orbit is completely determined by the signatures of the corresponding tensor classifiers. The advantage of working with tensor classifiers is that the identification of an orbit does not require the construction of an $\sl_2$-triple and corresponding  $\beta$- and $\gamma$-labels, which,  from a computational point of view, can be difficult. In our example, however, we did not construct a complete set of tensor classifiers (which would alone be sufficient to identify all the orbits in an intrinsic way), since this would require a different analysis, going way beyond the scope of the present work.

\subsection{Gr\"obner bases}\label{sec:gbconj}

Here we recall our set up from Section \ref{secsetup}. We have a
connected algebraic group $\widetilde{G}_0^c\subseteq \GL_k(\C)$, defined
over $\R$, and an isomorphism of algebraic groups $R^c\colon
\widetilde{G}_0^c\to G_0^c$. Furthermore, $\widetilde{G}_0 =
\widetilde{G}^c_0(\R)$ and $G_0 = R^c(\widetilde{G}^c_0(\R))$.
Consider the polynomial ring $Q = \R[x_{11},\ldots,x_{kk}]$ with $k^2$ indeterminates, and
let $p_1,\ldots,p_s\in Q$ such that $\widetilde{G}_0^c$ consists of
all $g\in \GL_k(\C)$ with  $p_1(g)=\ldots=p_s(g)=0$.
Let $m=\dim \g_1$ and fix a basis of $\g_1$; for $g\in\widetilde{G}^c_0(\R)$ denote by $\rho(g)$ be the $m\times m$ matrix over $Q$ representing $g$ with respect to the chosen  basis of $\g_1$.
Now consider nilpotent  $e,e'\in \g_1$. By writing $e$ and $e'$ as coefficient vectors with respect to
the chosen basis of $\g_1$, we get a set of polynomial equations, with
polynomials in $Q$,  equivalent to $e' = \rho(g)e$ for some $g\in \widetilde{G}^c_0(\R)$. To this set we
add $p_1,\ldots,p_s$, and denote the obtained set of polynomials by $P$.
Then $e$ and $e'$ are $G_0$-conjugate if and only if the set of polynomial equations
given by $P$ has a solution over $\R$. In general, this is not easy at all
to decide. However, in practice, computing a Gr\"obner basis of the ideal
generated by $P$, especially when done with respect to a lexicographical
order, often helps enormously. (We refer to \cite{clo} for an introduction
in the subject of Gr\"obner bases.) Indeed, if there is no solution, then
this is often witnessed by the appearance of polynomials in the Gr\"obner
basis that clearly have no simultaneous solution over $\R$, for instance, consider $x_{11}^2+x_{22}^2$ and $x_{11}x_{22}-1$. On the other hand,
if $e$ and $e'$  are conjugate, then the triangular structure of a Gr\"obner
basis with respect to a lexicographical order often helps to find an
explicit conjugating element.

\section{Classification results}\label{sectab}
\noindent
We have applied the methods described in Sections \ref{triples} and \ref{carrier1} to compute the nilpotent $G_0$-orbits in $\g_1$, where $G_0\cong (\SL_2(\R))^4$ and  $\g_1\cong V_2\otimes V_2\otimes V_2\otimes V_2$ with  $V_2$ the natural 2-dimensional $\SL_2(\R)$-module. The results of our computations are listed in Table \ref{tabletot}. The first column of this table lists a label (number) identifying
the orbit under $(\SL_2(\C))^4$ of the given element;  orbits with the same
label lie in the same complex orbit.
The second column lists the
dimension of the orbit, and the third column contains  the representatives that we found; for an explanation of the notation for these representatives we refer to Section \ref{sec:carexa}. The coefficients of our representatives are
$1$ or $-1$, and whenever a sign choice ``$\pm$'' is possible, different choices result in
representatives of different orbits. The final column displays the $\alpha$-, $\beta$-, $\gamma$-, and $\delta$-labels of the orbits; for an explanation of these we refer to Section \ref{SNO} along with Table~\ref{tablelable}.
We use an abbreviation to denote a set of labels, for example, $\{\beta^{(5;1)},\ldots,\beta^{(5;4)}\}$ and  $\{\delta^{(1)},\delta^{(2)}\}$ are abbreviated as $\beta^{(5;1,\ldots,4)}$ and $\delta^{(1,2)}$, respectively; analogous for the other labels.

\begin{table}[htb]
\caption{Real nilpotent $G_0$-orbits}\label{tabletot}
\begin{center}
\resizebox{0.90\columnwidth}{!}{%
\begin{tabular}{c|c|c|c}
{\bf complex orbit} & $\pmb{\dim}$ & \multicolumn{1}{|c|}{{\bf representative}} & {\bf labels} \\
\hline
1 & 5 & $(++--)$ & $\alpha^{(1)}\gamma^{(1;1)}\beta^{(1;1)}$\\
2 & 6 & $(++--)\pm (----)$ &
$\alpha^{(2)}\gamma^{(2;1)}\beta^{(2;1,2)}$\\
3 & 6 & $(++--)\pm (+--+)$ &
$\alpha^{(3)}\gamma^{(3;1)}\beta^{(3;1,2)}$\\
4 & 6 & $(++--)\pm (+-+-)$ &
$\alpha^{(4)}\gamma^{(4;1)}\beta^{(4;1,2)}$\\
5 & 6 & $(++-+)\pm (-+--)$ &
$\alpha^{(4)}\gamma^{(4;2)}\beta^{(4;1,2)}$\\
6 & 6 & $(+++-)\pm (-+--)$ &
$\alpha^{(3)}\gamma^{(3;2)}\beta^{(3;1,2)}$\\
7 & 6 & $(+++-)\pm (++-+)$ &
$\alpha^{(2)}\gamma^{(2;1)}\beta^{(2;1,2)}$\\
8 & 8 & $(++-+)\pm (-+--)\pm (+---)$ & $\alpha^{(5)}\gamma^{(5;4)}\beta^{(5;1,...,4)}$\\
9 & 8 & $(+++-)\pm (-+--)\pm (+---)$ & $\alpha^{(5)}\gamma^{(5;3)}\beta^{(5;1,...,4)}$\\
10 & 8 & $(+++-)\pm (++-+)\pm (+---)$ & $\alpha^{(5)}\gamma^{(5;1)}\beta^{(5;1,...,4)}$\\
11 & 8 & $(+++-)\pm (++-+)\pm (-+--)$ & $\alpha^{(5)}\gamma^{(5;2)}\beta^{(5;1,...,4)}$\\
12 & 9 & $(-+-+)+(++--)+(+--+)+(----)$ & $\alpha^{(6)}\gamma^{(6;4)}\beta^{(6;5)}$\\
12 & & $(-+-+)+(++--)+(+--+)-(----)$ & $\alpha^{(6)}\gamma^{(6;4)}\beta^{(6;3)}$\\
12 & & $(-+-+)+(++--)-(+--+)+(----)$ & $\alpha^{(6)}\gamma^{(6;4)}\beta^{(6;1)}$\\
12 & & $(-+-+)-(++--)+(+--+)+(----)$ & $\alpha^{(6)}\gamma^{(6;4)}\beta^{(6;4)}$\\
12 & & $-(-+-+)+(++--)+(+--+)+(----)$ & $\alpha^{(6)}\gamma^{(6;4)}\beta^{(6;2)}$\\
13 & 9 & $(-++-)+(++--)+(+-+-)+(----)$ & $\alpha^{(6)}\gamma^{(6;3)}\beta^{(6;5)}$\\
13 & & $(-++-)+(++--)+(+-+-)-(----)$ & $\alpha^{(6)}\gamma^{(6;3)}\beta^{(6;4)}$\\
13 & & $(-++-)+(++--)-(+-+-)+(----)$ & $\alpha^{(6)}\gamma^{(6;3)}\beta^{(6;1)}$\\
13 & & $(-++-)-(++--)+(+-+-)+(----)$ & $\alpha^{(6)}\gamma^{(6;3)}\beta^{(6;3)}$\\
13 & & $-(-++-)+(++--)+(+-+-)+(----)$ & $\alpha^{(6)}\gamma^{(6;3)}\beta^{(6;2)}$\\
14 & 9 & $(++++)+(++--)+(+--+)+(+-+-)$ & $\alpha^{(6)}\gamma^{(6;1)}\beta^{(6;5)}$\\
14 & & $(++++)+(++--)+(+--+)-(+-+-)$ & $\alpha^{(6)}\gamma^{(6;1)}\beta^{(6;3)}$\\
14 & & $(++++)+(++--)-(+--+)+(+-+-)$ & $\alpha^{(6)}\gamma^{(6;1)}\beta^{(6;4)}$\\
14 & & $(++++)-(++--)+(+--+)+(+-+-)$ & $\alpha^{(6)}\gamma^{(6;1)}\beta^{(6;1)}$\\
14 & & $-(++++)+(++--)+(+--+)+(+-+-)$ & $\alpha^{(6)}\gamma^{(6;1)}\beta^{(6;2)}$\\
15 & 9 & $(-+++)+(+++-)+(++-+)+(-+--)$ & $\alpha^{(6)}\gamma^{(6;2)}\beta^{(6;5)}$\\
15 & & $(-+++)+(+++-)+(++-+)-(-+--)$ & $\alpha^{(6)}\gamma^{(6;2)}\beta^{(6;1)}$\\
15 & & $(-+++)+(+++-)-(++-+)+(-+--)$ & $\alpha^{(6)}\gamma^{(6;2)}\beta^{(6;3)}$\\
15 & & $(-+++)-(+++-)+(++-+)+(-+--)$ & $\alpha^{(6)}\gamma^{(6;2)}\beta^{(6;4)}$\\
15 & & $-(-+++)+-(+++-)+(++-+)+(-+--)$ & $\alpha^{(6)}\gamma^{(6;2)}\beta^{(6;2)}$\\
16  & 9 & $(+++-)\pm (++-+)\pm (-+--)\pm (+---)$ &
$\alpha^{(6)}\gamma^{(5)}\beta^{(1,...,4)}$ \\
 &  &  &   $\alpha^{(6)}\gamma^{(5)}\beta^{(5)}\delta^{(1,...,4)}$\\
17 & 10 & $(+++-)\pm (++-+)\pm (----)$ & $\alpha^{(7)}\gamma^{(7;1)}\beta^{(7;1,2)}\delta^{(1,2)}$\\
18 & 10 & $(+++-)\pm (-+--)\pm (+--+)$ & $\alpha^{(8)}\gamma^{(8;2)}\beta^{(8;1,2)}\delta^{(1,2)}$\\
19 & 10 & $(++-+)\pm (-+--)\pm (+-+-)$ & $\alpha^{(9)}\gamma^{(9;2)}\beta^{(9;1,2)}\delta^{(1,2)}$\\
20 & 10 & $(-+-+)\pm (+++-)\pm (+---)$ & $\alpha^{(9)}\gamma^{(9;1)}\beta^{(9;1,2)}\delta^{(1,2)}$\\
21 & 10 & $(-++-)\pm (++-+)\pm (+---)$ & $\alpha^{(8)}\gamma^{(8;1)}\beta^{(8;1,2)}\delta^{(1,2)}$\\
22 & 10 & $(++++)\pm (-+--)\pm (+---)$ & $\alpha^{(7)}\gamma^{(7;2)}\beta^{(7;1,2)}\delta^{(1,2)}$\\
23 & 11 & $(+++-)\pm (-+--)\pm (----)\pm (+--+)$ & $\alpha^{(10)}\gamma^{(10;2)}\beta^{(10;1,\dots,4)}\delta^{(1,2)}$\\
24 & 11 & $(-++-)\pm (++-+)\pm (+---)\pm (----)$ &  $\alpha^{(10)}\gamma^{(10;1)}\beta^{(10;1,...,4)}\delta^{(1,2)}$\\
25 & 11 & $(++++)\pm (-+--)\pm (+---)\pm (+--+)$ &  $\alpha^{(10)}\gamma^{(10;3)}\beta^{(10;1,...,4)}\delta^{(1,2)}$\\
26 & 11 & $(++++)\pm (-+--)\pm (+---)\pm (+-+-)$ &  $\alpha^{(10)}\gamma^{(10;4)}\beta^{(10;1,...,4)}\delta^{(1,2)}$\\
27 & 12 & $(-+-+)\pm (+++-)\pm (----)\pm (+--+)$ &  $\alpha^{(11)}\gamma^{(11;4)}\beta^{(11;1,...,4)}\delta^{(1,2)}$\\
28 & 12 & $(-++-)\pm (++-+)\pm (----)\pm (+-+-)$ &  $\alpha^{(11)}\gamma^{(11;3)}\beta^{(11;1,...,4)}\delta^{(1,2)}$\\
29 & 12 & $(++++)\pm (-+--)\pm (+--+)\pm (+-+-)$ &  $\alpha^{(11)}\gamma^{(11;1)}\beta^{(11;1,...,4)}\delta^{(1,2)}$\\
30 & 12 & $(++++)\pm (-++-)\pm (-+-+)\pm (+---)$ &  $\alpha^{(11)}\gamma^{(11;2)}\beta^{(11;1,...,4)}\delta^{(1,2)}$
\end{tabular}}
\end{center}
\end{table}

\begin{table}[htb]
\caption{The list of $\alpha$-$\gamma$-$\beta$-$\delta$-labels}\label{tablelable}
\begin{center}\resizebox{0.70\columnwidth}{!}{%
\begin{tabular}{c|c|c}
{\bf $\pmb{\alpha}$-label} &{\bf$\pmb{\beta}$-$\pmb{\gamma}$-labels} & {\bf$\pmb{\delta}$-label (if present)} \\
\hline
$\alpha^{(1)}=(0,\,1,\,0,\,0)$&$\gamma^{(1;1)}=\beta^{(1;1)}=(1,\,1,\,1,\,1)$&\\
\hline
$\alpha^{(2)}=(2,\,0,\,0,\,0)$&$\gamma^{(2;1)}=\beta^{(2;1)}=(2,\,2,\,0,\,0)$&\\
&$\gamma^{(2;2)}=\beta^{(2;2)}=(0,\,0,\,2,\,2)$&\\
\hline
$\alpha^{(3)}=(0,\,0,\,2,\,0)$&$\gamma^{(3;1)}=\beta^{(3;1)}=(2,\,0,\,2,\,0)$&\\
&$\gamma^{(3;2)}=\beta^{(3;2)}=(0,\,2,\,0,\,2)$&\\
\hline
$\alpha^{(4)}=(0,\,0,\,0,\,2)$&$\gamma^{(4;1)}=\beta^{(4;1)}=(2,\,0,\,0,\,2)$&\\
&$\gamma^{(4;2)}=\beta^{(4;2)}=(0,\,2,\,2,\,0)$&\\
\hline
$\alpha^{(5)}=(1,\,0,\,1,\,1)$&$\gamma^{(5;1)}=\beta^{(5;1)}=(3,\,1,\,1,\,1)$&\\
&$\gamma^{(5;2)}=\beta^{(5;2)}=(1,\,3,\,1,\,1)$&\\
&$\gamma^{(5;3)}=\beta^{(5;3)}=(1,\,1,\,1,\,3)$&\\
&$\gamma^{(5;4)}=\beta^{(5;4)}=(1,\,1,\,3,\,1)$&\\
\hline
$\alpha^{(6)}=(0,\,2,\,0,\,0)$&$\gamma^{(6;1)}=\beta^{(6;1)}=(4,\,0,\,0,\,0)$&\\
&$\gamma^{(6;2)}=\beta^{(6;2)}=(0,\,4,\,0,\,0)$&\\
&$\gamma^{(6;3)}=\beta^{(6;3)}=(0,\,0,\,0,\,4)$&\\
&$\gamma^{(6;4)}=\beta^{(6;4)}=(0,\,0,\,4,\,0)$&\\
&$\gamma^{(6;5)}=\beta^{(6;5)}=(2,\,2,\,2,\,2)$&$\delta^{(1,\dots,4)}$\\
\hline
$\alpha^{(7)}=(2,\,2,\,0,\,0)$&$\gamma^{(7;1)}=\beta^{(7;1)}=(2,\,2,\,4,\,4)$&$\delta^{(1,2)}$\\
&$\gamma^{(7;2)}=\beta^{(7;2)}=(4,\,4,\,2,\,2)$&$\delta^{(1,2)}$\\
\hline
$\alpha^{(8)}=(0,\,2,\,2,\,0)$&$\gamma^{(8;1)}=\beta^{(8;1)}=(2,\,4,\,2,\,4)$&$\delta^{(1,2)}$\\
&$\gamma^{(8;2)}=\beta^{(8;2)}=(4,\,2,\,4,\,2)$&$\delta^{(1,2)}$\\
\hline
$\alpha^{(9)}=(0,\,2,\,0,\,2)$&$\gamma^{(9;1)}=\beta^{(9;1)}=(2,\,4,\,4,\,2)$&$\delta^{(1,2)}$\\
&$\gamma^{(9;2)}=\beta^{(9;2)}=(4,\,2,\,2,\,4)$&$\delta^{(1,2)}$\\
\hline
$\alpha^{(10)}=(2,\,0,\,2,\,2)$&$\gamma^{(10;1)}=\beta^{(10;1)}=(0,\,4,\,4,\,4)$&$\delta^{(1,2)}$\\
&$\gamma^{(10;2)}=\beta^{(10;2)}=(4,\,0,\,4,\,4)$&$\delta^{(1,2)}$\\
&$\gamma^{(10;3)}=\beta^{(10;3)}=(4,\,4,\,4,\,0)$&$\delta^{(1,2)}$\\
&$\gamma^{(10;4)}=\beta^{(10;4)}=(4,\,4,\,0,\,4)$&$\delta^{(1,2)}$\\
\hline
$\alpha^{(11)}=(2,\,2,\,2,\,2)$&$\gamma^{(11;1)}=\beta^{(11;1)}=(8,\,4,\,4,\,4)$&$\delta^{(1,2)}$\\
&$\gamma^{(11;2)}=\beta^{(11;2)}=(4,\,8,\,4,\,4)$&$\delta^{(1,2)}$\\
&$\gamma^{(11;3)}=\beta^{(11;3)}=(4,\,4,\,4,\,8)$&$\delta^{(1,2)}$\\
&$\gamma^{(11;4)}=\beta^{(11;4)}=(4,\,4,\,8,\,4)$&$\delta^{(1,2)}$
\end{tabular}}
\end{center}
\end{table}


\end{document}